\newcommand{\Title}{Title}
\numberwithin{equation}{section}
\theoremstyle{definition}\newtheorem{definition}{Definition}[section]
\newtheorem{defititle}[definition]{\Title}
\newtheorem{notation}[definition]{Notation}
\newtheorem{remark}[definition]{Remark}
\newtheorem{remarks}[definition]{Remarks}
\newtheorem{ex}[definition]{Example}
\newtheorem{prop}[definition]{Proposition}
\newtheorem{proposition-definition}[definition]{Proposition-Definition}
\newtheorem{lemma}[definition]{Lemma}
\newtheorem{thm}[definition]{Theorem}
\newtheorem{cor}[definition]{Corollary}
\newtheorem*{prop*}{Proposition}
\newtheorem*{theorem*}{Theorem}
\newenvironment{customthm}[1]
  {\innercustomthm}
  {\endinnercustomthm}
\newenvironment{customdef}[1]
  {\innercustomdef}
  {\endinnercustomdef}
\newcommand{\cO}{\mathcal{O}}
\newcommand{\cF}{\mathcal{F}}
\newcommand{\id}{{\hbox{id}}}
\newcommand{\cf}{{\it cf.}\/ }
\newcommand{\vX}{\mathfrak{X}}
\def\gpd{\,\lower1pt\hbox{$\longrightarrow$}\hskip-.24in\raise2pt
             \hbox{$\longrightarrow$}\,}
\renewcommand{\latticebody}{\drop@{ }}
\newcommand{\Z}{\ensuremath{\mathbb Z}}
\newcommand{\R}{\ensuremath{\mathbb R}}
\newcommand{\T}{\ensuremath{\mathbb{T}}}
\newcommand{\g}{\ensuremath{\mathfrak{g}}}
\newcommand{\A}{\ensuremath{\mathfrak{a}}}
\newcommand{\h}{\ensuremath{\mathfrak{h}}}
\newcommand{\cL}{{\mathcal L}}
\newcommand{\cA}{\mathcal{A}}
\newcommand{\ZZ}{\ensuremath{\mathbb Z}}
\newcommand{\CC}{\ensuremath{\mathbb C}}
\newcommand{\RR}{\ensuremath{\mathbb R}}
\newcommand{\PP}{\ensuremath{\mathbb P}}
\newcommand{\bt}{\mathbf{t}}                  
\newcommand{\bs}{\mathbf{s}}                  
\newcommand{\fls}{log-f symplectic}
\def\act{\mathbin{\hbox{$<\kern-.4em\mapstochar\kern.4em$}}}
\def\ract{\mathbin{\hbox{$\mapstochar\kern-.3em>$}}}
\def\PB(#1,#2,#3,#4){\left\{\begin{matrix}#1&\!\!\!\stackrel{?}{\longrightarrow}&\!\!\!#2\\
\downarrow&&\!\!\!\downarrow\\
#3&\!\!\!\stackrel{?}{\longrightarrow}&\!\!\!#4\end{matrix}\right\}}
\def\pb(#1,#2,#3,#4){ \hom(#1 \to #3, #2 \to #4)}
\begin{document}

\begin{center}
{\Large\bf Almost regular Poisson manifolds and their holonomy groupoids

\bigskip

{\sc by Iakovos Androulidakis and Marco Zambon}
}
 
\end{center}

{\footnotesize
National and Kapodistrian University of Athens
\vskip -4pt Department of Mathematics
\vskip -4pt Panepistimiopolis
\vskip -4pt GR-15784 Athens, Greece
\vskip -4pt e-mail: \texttt{iandroul@math.uoa.gr}
  
\vskip 2pt KU Leuven
\vskip-4pt  Department of Mathematics
\vskip-4pt Celestijnenlaan 200B box 2400
\vskip-4pt BE-3001 Leuven, Belgium.
\vskip-4pt e-mail: \texttt{marco.zambon@kuleuven.be}
}
\bigskip
\everymath={\displaystyle}

\date{today}

\begin{abstract}\noindent
We look at Poisson geometry taking the viewpoint of singular foliations, understood as suitable submodules generated by Hamiltonian vector fields rather than partitions into (symplectic) leaves. The class of Poisson structures which behave
best from this point of view, are those whose submodule generated by Hamiltonian vector fields arises from a \emph{smooth} holonomy groupoid. We call them \emph{almost regular Poisson structures} and determine them completely. They include regular Poisson and log symplectic manifolds, as well as several other Poisson structures whose symplectic foliation presents singularities.

We show that the holonomy groupoid associated with an almost regular Poisson structure is a Poisson groupoid, integrating a naturally associated Lie bialgebroid. The Poisson structure on the holonomy groupoid is regular, and as such it provides a desingularization.  
The holonomy groupoid is ``minimal'' among Lie groupoids which give rise to the submodule generated by Hamiltonian vector fields. This implies that, in the case of log-symplectic manifolds, the holonomy groupoid coincides with the symplectic groupoid constructed by Gualtieri and Li.
 Last, we focus on the integrability of almost regular Poisson manifolds and exhibit the role of the second homotopy group of the source-fibers of the holonomy groupoid.
\end{abstract}

\setcounter{tocdepth}{2} 
\tableofcontents

\section*{Introduction}
\addcontentsline{toc}{section}{Introduction}


A great deal of the geometry, as well as the topology of a Poisson manifold $(M,\pi)$ is determined by the nature of its symplectic foliation, namely the partition of $M$ to immersed submanifolds (leaves) which carry a symplectic structure. Most of the symplectic foliations occurring in Poisson geometry are singular, that is to say, the dimension of the symplectic leaves varies.

In this paper we take an approach to symplectic foliations which is different from the traditional one,  adopting the Stefan-Sussmann definition for singular foliations used in \cite{AndrSk}. More precisely: instead of  considering the partition of $M$ into symplectic leaves, we focus on the $C^{\infty}(M)$-module of vector fields
\begin{equation*}
\cF := \sharp(\Omega^1_c(M)),
\end{equation*}
where $\sharp : T^{\ast}M \to TM$ is the map defined by contraction with $\pi$ and $\Omega^1_c(M)$ denotes the compactly supported sections of $T^{\ast}M$. In other words, $\cF$ is the $C^{\infty}(M)$-submodule of $\vX(M)$ generated by the compactly supported Hamiltonian vector fields.
Clearly the symplectic foliation can be recovered from $\cF$, by {integrating} the vector fields in $\cF$ (\cf \cite{Stefan, Sussmann}).

{The viewpoint of (Stefan-Sussmann) singular foliations provides us some new tools, see \S \ref{sec:proj}.
The most important of these tools is the holonomy groupoid $H(\cF)$ of the singular foliation $\cF$ \cite{AndrSk}, because:}
{
\begin{itemize}
\item[1)]  When $H(\cF)$ is a Lie groupoid, it induces the singular foliation $\cF$ on $M$ via the target map.
Hence $H(\cF)$  ``desingularizes'' the singular foliation on $M$:  the leaves of $\cF$ are obtained applying the target map to the source-fibers of $H(\cF)$.
\item[2)]  $H(\cF)$ is an adjoint groupoid, that is, it is the terminal object among all Lie groupoids whose Lie algebroid is isomorphic to that of $H(\cF)$. 
\end{itemize}
}
{For this reason we restrict our attention to those Poisson manifolds for which $H(\cF)$ is a Lie groupoid. In that case it turns out that the Poisson structure $\pi$ on $M$ is also ``desingularized'' by $H(\cF)$. We now elaborate on this.}
 
\paragraph{\underline{{Almost regular Poisson manifolds}}}

{The natural regularity condition from the  singular foliations point of view leads to the following definition, which we present in \S \ref{sec:aregp} (Def. \ref{def:almostregnew}):

\begin{customdef}{A}\label{def:almostregnewIntro}
A Poisson manifold $(M,\pi)$ is \emph{almost regular} if there is an isomorphism of $C^{\infty}(M)$-modules $\cF\cong \Gamma_c(A)$ for some vector bundle $A$ over $M$.
\end{customdef}
 
Almost regular Poisson structures include these two classes (the intersection of which are the symplectic manifolds, i.e. the full-rank Poisson tensors):

\begin{itemize}
\item \emph{Regular}; that is Poisson structures such that $\pi$ has constant rank.
\item \emph{Poisson structures having full rank on a dense open subset}. These include \emph{Log-symplectic} structures, i.e. Poisson structures on manifolds $M$ -- necessarily even dimensional manifolds, say of dimension $2n$ -- such that $\wedge^n\pi$ is a section of  the line bundle $\wedge^{2n}TM$ which is transverse to the zero section. 
They also include scattering symplectic structures \cite{LaniusScattering}.
\end{itemize}

The following   characterization of almost regular Poisson manifolds (see Theorem \ref{thm:aregpois})  is entirely in terms of the leaves of the symplectic foliation rather than the module $\cF$:
 \begin{customthm}{B}\label{thm:introchar}
A Poisson manifold $(M,\pi)$ is almost regular if and only if:
\begin{enumerate}
\item[i)] The subset $M_{\mathrm{reg}}$ where $\sharp$ has maximal rank is  dense in $M$, and
\item[ii)] There is a  (constant rank) distribution $D$ of $M$, such that at every $x \in M_{\mathrm{reg}}$ the symplectic leaf of $(M,\pi)$ at $x$ is an open subset of the leaf of $D$ at $x$.
\end{enumerate}

In this case: the distribution $D$ is unique, it is integrable, and 
its leaves are Poisson submanifolds.
\end{customthm}

The simplest example of an almost regular Poisson manifold that does not fall into the above two extreme classes is probably   $(\RR^3,t\partial_{x}\wedge \partial_{y})$, the dual of the Lie algebra of the Heisenberg group. In \S \ref{sec:examples} we explain how to construct almost regular Poisson structures in several ways, such as: from other almost regular ones by multiplying with suitable Casimir functions; via classical constructions in foliation theory; by taking a Poisson manifold equipped with a foliation by cosymplectic submanifolds, obtaining a foliated version of log-symplectic structures.

 \paragraph{\underline{{Desingularizing almost regular Poisson structures}}}
  
{
The principal motivation for introducing  Definition \ref{def:almostregnewIntro} is the following: for any Poisson manifold $(M,\pi)$, denoting by  $\cF$ the  $C^{\infty}(M)$-module defined at the beginning of this introduction, 
\begin{center}\emph{$H(\cF)$ is a Lie groupoid if{f} $(M,\pi)$ is an almost regular Poisson manifold}.
  \end{center}
This result is mainly due to the work of Debord \cite{DebordJDG} (for a more detailed account of the proof, see Remark \ref{rmk:smoothholgpd}). Now let  $(M,\pi)$ be almost regular. The vector bundle appearing in Def. \ref{def:almostregnewIntro}, which is just $D^*$, acquires canonically a Lie algebroid structure and is exactly the Lie algebroid of $H(\cF)$. 
In  \S \ref{sec:Poisgpd}  we prove:
\begin{itemize}
\item[3)]    $H(\cF)$ is a Poisson groupoid which induces the Poisson structure $\pi$ on $M$ via the target map. Further the Poisson structure $\Pi$ on $H(\cF)$ is regular. Hence $(H(\cF),\Pi)$  ``desingularizes'' the Poisson structure on $M$.
\end{itemize}
From 1), 2), 3) it follows that \emph{any} Lie groupoid integrating the Lie algebroid $D^{\ast}$ is a Poisson groupoid that ``desingularizes'' both the singular foliation $\cF$ and the Poisson structure $\pi$ on $M$.
}

{To put the above result into context, notice that 
the pair $(D^{\ast},D)$ is a Lie bialgebroid, as 
one checks using Thm. \ref{thm:introchar}.   It is trivial and well-known that the Lie algebroid $D$ admits an adjoint groupoid, namely the holonomy groupoid $H(D)$ of the foliation tangent to $D$. Note that $H(D)$ is a Poisson groupoid inducing   the Poisson structure $-\pi$ on $M$ via the target map, but the Poisson structure on $H(D)$ is not almost regular.}
 
 The results  we obtain in Prop. \ref{prop:HD}, Thm. \ref{thm:Poisgr} and Prop. \ref{prop:hfhd} can be summarized as follows:
 \begin{customthm}{C}\label{thm:introduality}
\begin{enumerate}
\item[i)] $H(D)$ and $H(\cF)$ are Poisson groupoids in duality, integrating respectively the Lie bialgebroids $(D,D^{\ast})$ and $(D^{\ast},D)$.
\item[ii)] There is a morphism of Lie groupoids $H(\cF) \to H(D)$ which is an anti-Poisson map and integrates the Lie algebroid morphism  
$\sharp \colon D^*\to D$.
\end{enumerate}
\end{customthm}

In the regular case we obtain nothing new, and in the Log-symplectic case we have $D=TM$ and the latter map is $(\bt,\bs) : H(\cF) \to M \times M$. 
 
\paragraph{\underline{Log-symplectic manifolds.}}

The fact that $H(\cF)$ is the adjoint groupoid of $D^{\ast}$ encompasses the case of Log-symplectic manifolds and their integrability. Recall that Gualtieri and Li in \cite{GuLi} constructed a symplectic groupoid for Log-symplectic manifolds using a blow-up construction (familiar by the work of Melrose on manifolds with boundary), and show that this groupoid is ``adjoint''.

Indeed, the Lie groupoid constructed by Gualtieri and Li is a holonomy groupoid, and therefore can be obtained not only by a blow-up procedure but also by the general and systematic construction of holonomy groupoids given in \cite{AndrSk}.

The interesting case of Log-symplectic manifolds  is a further motivation to cast our studies into singular foliation theory. This means that we enlarge the cases we examine to  the widest class of Poisson structures for which the holonomy groupoid is a Lie groupoid, i.e. -- as already mentioned -- to almost regular Poisson structures.
  
\paragraph{\underline{Integrability.}} 
Another point revealed by considering the module $\cF$, is that the integrability of an almost regular Poisson manifold $(M,\pi)$ depends {only} on the
holonomy of the symplectic foliation. 
We show in \S \ref{sec:integralgds} that if $(M,\pi)$ is almost regular  and the source-fibers of $H(\cF)$ are 2-connected, then $(M,\pi)$ is integrable. 

\paragraph{\underline{A hierarchy for Poisson structures.}}
In this paper we focus on Poisson manifolds whose underlying singular foliation $\cF$ is projective, i.e. there is an isomorphism $ \Gamma_c(A)\overset{\sim}{\to}\cF$ for some vector bundle $A$. Let us point out  an example of Poisson structure which does not belong to the ``almost regular'' class we discuss in this paper, but still satisfy a kind of projectivity, in fact at a higher level. 
Consider the Lie-Poisson manifold $M=\mathfrak{su(2)}^*$, the dual of the Lie algebra of $SU(2)$.
 In \S \ref{subsec:lin} we show that in this case, while the module $\cF$ is not projective,  the sections of $A_0: = T^{\ast}M$ whose images by the anchor map vanish form a projective  module of rank one. Whence, in this case $\cF$ has a projective resolution of length 1: 
 $$0\to \Gamma(A_1)\to \Gamma(A_0)\to \cF\to 0$$
 for $A_1$ the trivial line bundle.
 As far as brackets as concerned, recent work of Laurent-Gengoux, Lavau and Strobl  \cite{LLG}
 proves that given an arbitrary singular foliation $\cF$, every projective resolution of $\cF$ has an $L_{\infty}$-algebroid structure, and that the resulting $L_{\infty}$-algebroid is unique up to quasi-isomorphism.

Thus it seems that the projective dimension of the symplectic foliation $\cF$ might be used to give a hierarchy of Poisson structures.
Although such a hierarchy is formulated in terms of homological algebra, in essence it distinguishes Poisson structures to classes according to the nature of the singularities of their symplectic foliations. From this point of view, almost regular Poisson structures are 
the class of Poisson structures whose symplectic foliation has zero projective dimension. 
Determining and characterizing the classes with higher projective dimension in the fashion of the current paper, is a problem of different order, which we hope
will be addressed in the future.

\paragraph{Conventions:} All Lie groupoids are {\bf assumed to be source connected}, not necessarily Hausdorff, but with Hausdorff source-fibers. Given a Lie groupoid $G\rightrightarrows M$, we denote by $\bt$ and $\bs$ its target and source maps, and by $i\colon G\to G$ the inversion map. We denote by $1_x\in G$ the identity
element corresponding to a point $x\in M$, and by $1_M\subset G$ the submanifold of identity elements.
Two elements $g,h\in G$ are composable if $\bs(g)=\bt(h)$. We identify the Lie algebroid of $G$, which we sometimes denote by $Lie(G)$, with $\ker (d\bs)|_M$.

\paragraph{Acknowledgements:}
This work was partially supported by Marie Curie Career Integration Grant PCI09-GA-2011-290823 (Athens), by 
Pesquisador Visitante Especial grant 88881.030367/2013-01 (CAPES/Brazil), by IAP Dygest (Belgium), 
the long term structural funding -- Methusalem grant of the Flemish Government, and by SCHI 525/12-1 (DFG/G\"{o}ttingen).  {We thank  M. Gualtieri,  Y. Kosmann-Schwarzbach and S. Lavau for useful suggestions. We also thank O. Brahic and E. Hawkins for suggesting several improvements based on a  draft of this note.
}


\section{Projective singular foliations}\label{sec:proj}

{In this section we introduce projective singular foliations, recall that they have an associated Lie groupoid, and treat in detail the case of singular foliations arising from Lie algebroids. The connections to Poisson geometry are deferred to the  section \ref{sec:aregp}.}

\subsection{Projective singular foliations}\label{subsec:locp}

{After recalling briefly the notion of singular foliation \cite{AndrSk} (understood as a suitable submodule of vector fields), we introduce projective singular foliations.}
 
 \begin{definition}
 A {\em{singular foliation}} on $M$ is a $C^{\infty}(M)$-submodule $\cF$ of  $\vX_c(M)$ (the compactly supported vector fields), which is locally finitely generated and stable by the Lie bracket.
\end{definition}

By the work of  Stefan \cite{Stefan} and Sussmann \cite{Sussmann}, $\cF$ induces a partition of $M$ into injectively immersed connected submanifolds (called leaves), whose tangent spaces are given exactly by the evaluations of the elements of $\cF$. 

 \begin{notation}\label{not:algdcF}
Let $(M,\cF)$ be a singular foliation. We establish the following notation, which will be used throughout this sequel. Let $x\in M$.
\begin{itemize}
\item Let $L_x$ be the leaf of the foliation $(M,\cF)$ at $x$. We denote by $F_x$ the vector space $T_x L_x$.
\item The quotient
$$A^{\cF}_x:=\cF/I_x\cF$$
is a finite dimensional vector space (see \cite{AndrSk}). 
Here $I_x$ stands for ideal of functions in  $C^{\infty}(M)$ which vanish at $x$.
  
\item Recall from \cite[\S 1]{AndrSk} that  the evaluation map gives rise to a short exact sequence of vector spaces $$0 \to \A^{\cF}_{x} \to A^{\cF}_{x} \stackrel{ev_x}{\longrightarrow} F_x \to 0$$ Its kernel $\A^{\cF}_x$ becomes a Lie algebra by inheriting the Lie bracket of $\cF$. It is called the isotropy of the foliation $\cF$ at $x$.
\item Let $L_x$ be the leaf of $(M,\cF)$ at $x \in M$. We denote $A^{\cF}_{L_x} := \bigcup_{y \in L_x}A^{\cF}_{y}$. This is a transitive Lie algebroid {\cite{AZ1}}, namely an extension $$0 \to \A^{\cF}_{L_x} \to A^{\cF}_{L_x} \stackrel{ev}{\longrightarrow} TL_x \to 0$$ where $\A^{\cF}_{L_x} = \bigcup_{y \in L_x}\A^{\cF}_{y}$ is a bundle of Lie algebras. 
\end{itemize}
\end{notation}

The terminology in the following definition is motivated by the Serre-Swan theorem
\cite[Thm. 2]{Swan}:
\begin{definition}\label{def:projsfol}
A singular foliation $\cF$ on $M$ is called \emph{projective}  if there exist a vector bundle $A\to M$ and an isomorphism of $C^{\infty}(M)$-modules $\cF\cong\Gamma_{c}(A)$.
\end{definition}

\begin{lemma}
Given a projective singular foliation $\cF$:
\begin{enumerate} 
\item[i)]    The vector bundle  $A$ is a Lie algebroid, with Lie bracket obtained from the Lie bracket of vector fields via $\Gamma_{c}(A)\cong \cF$ and with anchor $\rho \colon A\to TM$ given by evaluation. Moreover $A$ is an \emph{almost injective} Lie algebroid, i.e. the anchor $\rho$ is injective\footnote{Equivalently,  
 the map at the level of sections $\Gamma(A)\to \Gamma(TM)$
is injective.} over an open dense subset   of $M$. 
\item[ii)]  Up to isomorphism covering $\id_M$ there is a unique  almost injective Lie algebroid $A$  so that $\rho(\Gamma_{c}(A))=\cF$. 
We refer to $A$ as an \emph{almost injective Lie algebroid inducing $\cF$}.
\end{enumerate}
\end{lemma}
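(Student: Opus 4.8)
The plan is to fix a $C^{\infty}(M)$-module isomorphism $\varphi\colon\Gamma_{c}(A)\xrightarrow{\sim}\cF\subseteq\vX_c(M)$ and to transport all structure through it. First I would build the anchor. Since $\varphi$ is $C^{\infty}(M)$-linear, the value $\varphi(a)(x)$ depends only on $a(x)$: working in a local frame $e_1,\dots,e_r$ of $A$ near $x$ and writing $a=\sum_i f_i e_i$, linearity gives $\varphi(a)(x)=\sum_i f_i(x)\,\varphi(\tilde e_i)(x)$ for compactly supported extensions $\tilde e_i$ of the $e_i$, which depends only on the numbers $f_i(x)$. Hence $\rho\colon A\to TM$, $\rho(a(x)):=\varphi(a)(x)$, is a well-defined smooth bundle map, and by construction the vector field $x\mapsto\rho(a(x))$ equals $\varphi(a)$, so $\rho(\Gamma_{c}(A))=\cF$. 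Next I would define the bracket on $\Gamma_{c}(A)$ by $[a,b]_A:=\varphi^{-1}\big([\varphi(a),\varphi(b)]\big)$, which is meaningful because $\cF$ is closed under the Lie bracket; it is automatically $\R$-bilinear, antisymmetric, and satisfies Jacobi. The Leibniz rule $[a,fb]_A=f[a,b]_A+(\rho(a)f)\,b$ then follows by applying $\varphi^{-1}$ to $[\varphi(a),f\varphi(b)]=f[\varphi(a),\varphi(b)]+(\rho(a)f)\,\varphi(b)$, using $C^{\infty}(M)$-linearity of $\varphi$ and the fact that $\varphi(a)$ acts on functions as $\rho(a)$. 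This establishes that $A$ is a Lie algebroid with the stated anchor and bracket.

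The step I expect to require the most care is almost injectivity of $\rho$. I would argue it locally. Fix $x_0\in M$, a neighborhood $U$ carrying a frame $e_1,\dots,e_r$ of $A|_U$, and $\chi\in C_c^{\infty}(U)$ with $\chi\equiv1$ on a connected open $W\ni x_0$; set $s_i:=\chi e_i\in\Gamma_{c}(A)$ and $X_i:=\varphi(s_i)\in\cF$. Injectivity of $\varphi$ yields a local independence statement: if $g_i\in C_c^{\infty}(W)$ satisfy $\sum_i g_i X_i=0$, then $\varphi(\sum_i g_i e_i)=\sum_i g_i X_i=0$ (since $\chi\equiv1$ on the supports), whence $\sum_i g_i e_i=0$ and all $g_i=0$. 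Now let $k:=\max_{x\in W}\operatorname{rank}\{X_1(x),\dots,X_r(x)\}$, attained on a dense open $W_0\subseteq W$. If $k<r$, then on a small open $V\subseteq W_0$ of constant rank $k$ one can solve for a smooth dependency $X_{i_0}=\sum_j a_j X_j$; multiplying by a nonzero $\psi\in C_c^{\infty}(V)$ produces a nontrivial relation $\sum_i g_i X_i=0$ with $g_{i_0}=\psi\neq0$, contradicting the local independence above. Hence $k=r$, so $\rho_x$ is injective on the dense open set $W_0$. As $x_0$ was arbitrary these sets cover $M$, and $\{x:\rho_x\text{ injective}\}$ is open, so it is open and dense; this is exactly almost injectivity (which implies the section-level injectivity of the footnote by density).

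For part ii) I would first note that almost injectivity makes $a\mapsto\rho(a)$ an \emph{injective} map $\Gamma_{c}(A)\to\cF$ (if $\rho(a)=0$ then $a$ vanishes on a dense open set, hence everywhere), and it is surjective by the hypothesis $\rho(\Gamma_{c}(A))=\cF$; thus it is a $C^{\infty}(M)$-module isomorphism, and likewise for any second almost injective Lie algebroid $A'$ with $\rho'(\Gamma_{c}(A'))=\cF$. Composing gives a $C^{\infty}(M)$-module isomorphism $\Phi:=(\rho')^{-1}\circ\rho\colon\Gamma_{c}(A)\to\Gamma_{c}(A')$ with $\rho'\circ\Phi=\rho$. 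By the Serre--Swan theorem \cite{Swan} (equivalently, by the locality argument of part i) applied to $\Phi$ and $\Phi^{-1}$), $\Phi$ is induced by a vector bundle isomorphism $\psi\colon A\to A'$ covering $\id_M$. This $\psi$ intertwines the anchors since $\rho'\circ\psi=\rho$, and it intertwines the brackets because both are pulled back from the Lie bracket on $\cF$: applying the injective map $\rho'$ to $\Phi[a,b]_A$ and to $[\Phi a,\Phi b]_{A'}$ gives $[\rho a,\rho b]$ in both cases. Hence $\psi$ is an isomorphism of Lie algebroids, which proves the uniqueness; existence is furnished by part i).
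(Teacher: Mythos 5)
Your treatment of part ii) is essentially the paper's own argument: the paper likewise uses almost injectivity to regard $\rho$ and $\rho'$ as module isomorphisms, composes $\Gamma_{c}(A)\overset{\rho}{\cong}\cF\overset{\rho'}{\cong}\Gamma_{c}(A')$, and converts the resulting $C^{\infty}(M)$-module isomorphism into a vector bundle isomorphism covering $\id_M$ --- the only difference being that the paper does this by passing to the fibers $A_x=\Gamma_{c}(A)/I_x\Gamma_{c}(A)$ rather than by invoking Serre--Swan, which is the same mechanism in different clothing. Your explicit verification that the bundle map intertwines anchors and brackets fills in a step the paper leaves implicit. For part i) the paper gives no proof at all (it is declared well-known), so your construction of the anchor via $C^{\infty}(M)$-linearity and of the bracket by transport is welcome detail, and your ``local independence'' statement is exactly the claim that appears in the paper's proof of Lemma \ref{lem:algcF}.

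One sentence in your almost-injectivity argument is stated too quickly: the assertion that the maximal-rank locus $W_0$ is \emph{dense} in $W$ is not automatic. Lower semicontinuity of rank makes $W_0$ open, not dense --- for instance a single vector field $\phi\,\partial_x$ with $\phi$ vanishing identically on a half-space has maximal-rank locus that is open but not dense. The good news is that your own contradiction argument repairs this without new ideas: for \emph{any} open ball $B\subseteq W$, the local independence statement applies to functions $g_i\in C^{\infty}_c(B)$, so running the constant-rank/smooth-dependency construction inside $B$ shows that the rank must reach $r$ somewhere in $B$; hence every open subset of $W$ meets the full-rank locus, which is therefore dense in $W$, and the rest of your conclusion (openness plus density of $\{x:\rho_x \text{ injective}\}$, and the section-level injectivity of the footnote) goes through unchanged. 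With that one-line fix the proposal is correct in full.
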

\begin{proof} Item i) is well-known, so we prove only the 
 uniqueness statement ii). Let $A'$ be an another almost injective Lie algebroid such that $\rho'(\Gamma_{c}(A'))=\cF$. Then the composition 
$\phi\colon \Gamma_{c}(A)\overset{\rho}{\cong}\cF\overset{\rho'}{\cong}\Gamma_{c}(A')$ is an isomorphism of $C^{\infty}(M)$-modules. It induces an isomorphism of Lie algebroids $A\to A'$ which on the fiber over any $x\in M$ reads $A_x=\Gamma_{c}(A)/I_x\Gamma_{c}(A)\overset{\phi}{\to}\Gamma_{c}(A')/I_x\Gamma_{c}(A')=A'_x$. Here  we denote by $I_x$ the ideal of smooth functions on $M$ vanishing at $x$. 
\end{proof}

A leaf of a singular foliation is said to be \emph{regular} 
if it has  an open neighbourhood such that all leaves intersecting this  neighbourhood   have the same dimension. The set of regular points of $\cF$, which is by definition the union of the regular leaves, is denoted by $M_{\mathrm{reg}}$. It is an open dense subset of $M$ \cite[Prop. 1.5 c)]{AndrSk}.

\begin{remark}\label{rem:dense} If $\cF$ is a    projective foliation,   
the anchor $\rho$ of a corresponding almost injective Lie algebroid $A$ is injective exactly at points of $M_{\mathrm{reg}}$. It follows 
that the regular leaves are exactly the leaves of maximal dimension.
In particular, the restriction $\cF|_{M_{\mathrm{reg}}}$ to the  open dense subset $M_{\mathrm{reg}}$ is a regular foliation by leaves of  dimension $\mathrm{rk}(A)$. 
\end{remark}
 
Recall from Notation \ref{not:algdcF} the fiber $A^{\cF}_{x} = \cF/I_x\cF$ of the module $\cF$ at a point $x \in M$. The following lemma will be quite important in this sequel.

\begin{lemma}\label{lem:algcF}
\begin{enumerate}
\item[i)] The module $\cF$ is projective if{f} $\mathrm{dim}(A^{\cF}_{x})$ is a constant function of $x\in M$. 
\item[ii)] In this case there is a canonical Lie algebroid structure on 
$$A^{\cF} := \bigcup_{x\in M} A^{\cF}_{x}$$
 making it an almost injective Lie algebroid inducing $\cF$.
\end{enumerate}
\end{lemma}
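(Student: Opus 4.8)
The plan is to prove the two implications in (i) separately---the backward one carrying all the technical weight---and then to read the Lie algebroid structure of (ii) directly off the construction. For the forward implication, suppose $\cF\cong\Gamma_c(A)$ for a vector bundle $A$. The quotient presentation of the fibre gives $A^{\cF}_x=\cF/I_x\cF\cong\Gamma_c(A)/I_x\Gamma_c(A)$, and a routine bump-function argument shows that evaluation at $x$ is a surjection $\Gamma_c(A)\to A_x$ whose kernel is exactly $I_x\Gamma_c(A)$: writing a section vanishing at $x$ as $\sum_j f_j e_j$ with $f_j(x)=0$ in a local frame, one splits it into a piece supported near $x$ (which lies in $I_x\Gamma_c(A)$ after multiplying the $f_j$ into the frame) and a piece $(1-\chi)s$ with $1-\chi\in I_x$. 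Hence $\dim A^{\cF}_x=\mathrm{rk}(A)$ is locally constant in $x$, as required.

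The backward implication is the heart of the matter. Fix $x_0\in M$, let $k=\dim A^{\cF}_{x_0}$, and choose local generators $X_1,\dots,X_n$ of $\cF$ on a neighbourhood $U$ of $x_0$; their classes span $A^{\cF}_{x_0}$, so after reindexing $[X_1]_{x_0},\dots,[X_k]_{x_0}$ form a basis. The first step is to show that, after shrinking $U$, the fields $X_1,\dots,X_k$ already generate $\cF|_U$. Writing each $X_i$ with $i>k$ as $\sum_{j\le k}c_{ij}X_j$ modulo $I_{x_0}\cF$ and expanding the remainder in the generators (using that, since the $X_\bullet$ generate, $I_{x_0}\cF$ consists precisely of combinations $\sum_l b_l X_l$ with $b_l(x_0)=0$), one obtains a linear system $(\mathrm{Id}-C)(X_{k+1},\dots,X_n)^{\top}=(\ast)(X_1,\dots,X_k)^{\top}$ in which the matrix $C$ vanishes at $x_0$; as $\mathrm{Id}-C$ is then invertible near $x_0$, I can solve $X_i=\sum_{j\le k}f_{ij}X_j$ for $i>k$ with smooth $f_{ij}$. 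The second, crucial step invokes the constancy hypothesis: because $X_1,\dots,X_k$ generate $\cF|_U$ their classes span each $A^{\cF}_x$, and since $\dim A^{\cF}_x\equiv k$ a spanning set of $k$ vectors is a basis, so $[X_1]_x,\dots,[X_k]_x$ are linearly independent at every $x\in U$; a relation $\sum f_j X_j=0$ then forces, upon passing to $A^{\cF}_x$, all $f_j(x)=0$, so $X_1,\dots,X_k$ is a free local frame. Finally I would give $A^{\cF}=\bigsqcup_x A^{\cF}_x$ the vector bundle structure with trivialisations $U\times\R^k\to A^{\cF}|_U$, $(x,a)\mapsto\sum_j a_j[X_j]_x$, whose transition functions are smooth because two such frames are related in $\cF$ by a smooth invertible matrix, and check that $\cF\to\Gamma_c(A^{\cF})$, $X\mapsto(x\mapsto[X]_x)$, is an isomorphism of $C^{\infty}(M)$-modules (local surjectivity from the generating property plus a partition of unity, injectivity from freeness), proving projectivity. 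I expect the main obstacle to lie precisely here, in the interplay between reducing to a minimal generating set near $x_0$ via the invertibility argument and deducing pointwise linear independence from constancy of $\dim A^{\cF}_x$---the single point where the hypothesis is indispensable.

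For (ii), the anchor $\rho\colon A^{\cF}\to TM$ is the evaluation $[X]_x\mapsto X(x)$, well defined since elements of $I_x\cF$ vanish at $x$, and the bracket on $\Gamma_c(A^{\cF})\cong\cF$ is the Lie bracket of vector fields; the Leibniz and Jacobi identities are then inherited verbatim from vector fields, so $A^{\cF}$ is a Lie algebroid inducing $\cF$. It is almost injective: over $M_{\mathrm{reg}}$ the leaf dimension is maximal and equals $\mathrm{rk}(A^{\cF})=k$, so in the exact sequence $0\to\A^{\cF}_x\to A^{\cF}_x\xrightarrow{ev_x}F_x\to 0$ one has $\dim F_x=k=\dim A^{\cF}_x$, whence $\A^{\cF}_x=0$ and $\rho$ is injective on the open dense set $M_{\mathrm{reg}}$. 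By the uniqueness statement in the preceding lemma, this $A^{\cF}$ is \emph{the} almost injective Lie algebroid inducing $\cF$, which is the asserted canonical structure.
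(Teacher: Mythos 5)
Your proof is correct and follows essentially the same route as the paper: lift a basis of $A^{\cF}_{x_0}$ to local generators, use constancy of $\dim A^{\cF}_x$ to upgrade the pointwise spanning sets to bases and hence deduce that the local frame is free, glue the resulting trivialisations into a vector bundle $A^{\cF}$, and transport the bracket and the evaluation anchor through the isomorphism $\Gamma_c(A^{\cF})\cong\cF$. The only difference is one of packaging: your $(\mathrm{Id}-C)$-invertibility reduction to $k$ generators is an inlined proof of the Nakayama-type lifting statement that the paper simply cites (\cite[Prop.~1.5]{AndrSk}), and your verification of almost injectivity via vanishing isotropy over $M_{\mathrm{reg}}$ spells out what the paper treats as well known.
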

\begin{proof} {We first prove i).}  
If $\cF$ is   projective we have $\cF\cong \Gamma_{c}(A)$ for a vector bundle $A$, so $A^{\cF}_{x}=\Gamma_{c}(A)/I_x\Gamma_{c}(A)=A_x$ (the fiber of $A$ at $x$) has the same dimension for all $x\in M$.

The proof of the converse implication is standard \cite{Swan}, we reproduce it for the reader's convenience. Fix $x\in M$, take a basis of $A^{\cF}_{x}$ and lift it to a set of generators $X_1,\dots,X_n$ of $\cF|_{U}$, where $U$ is a neighbourhood of $x$. {This is possible by \cite[Prop. 1.5]{AndrSk}.}
We claim that $$\sum_i f_iX_i\equiv 0\;\;\Rightarrow \;\; f_i= 0 \text{ for all }i,$$ where $f_i\in C^{\infty}(U)$. Indeed,  since the vector fields $X_1,\dots,X_n$ are generators of $\cF|_{U}$, for all $y\in U$
they induce a spanning set $[X_1]_y,\dots, [X_n]_y$ of $A^{\cF}_{y}$. This spanning set is actually a basis since $\dim(A^{\cF}_{y})=\dim(A^{\cF}_{x})$. Since the image of 
$\sum_i f_iX_i$ in the quotient $A^{\cF}_{y}$ is $\sum_i f_i(y)[X_i]_y$, we obtain $f_i(y)=0$ for all $i$, proving the claim.

We can cover $M$ by open subsets $U$   on which generators $X_1^U,\dots,X_n^U$ of $\cF|_U$
as above are defined. Consider the trivial rank $n$ vector bundle over $U$, with $X_1^U,\dots,X_n^U$ as canonical frame. By the above claim, on non-empty overlaps $U\cap V$ there are \emph{unique} functions $g^{UV}_{ij}\in C^{\infty}(U\cap V)$ such that
$X_i^U=\sum_jg_{ij}^{UV}X_j^V$ on $U\cap V$. The uniqueness implies that 
we can ``glue'' the trivial  vector bundles to a vector bundle, denote by $A^{\cF}\to M$,  using  the $g^{UV}_{ij}$  as transition functions.

The above claim also implies that the natural vector bundle map ${A^{\cF}}\to TM$ given by evaluation induces an isomorphism $\Gamma_{c}(A^{\cF})\cong \cF$ at the level of sections. {This terminates the proof of i). It also shows that}
 the Lie bracket of vector fields on $\cF$ can be pulled back to $A^{\cF}$, making $A^{\cF}$ into an almost injective Lie algebroid over $M$ inducing $\cF$ {and proving ii).}\end{proof}

\subsection{Holonomy groupoids of projective singular foliations}\label{subsec:holgr}

We exhibit a few facts about Lie groupoids associated to projective singular foliations, for motivational purposes and for later use. Lie groupoids will not be used until \S \ref{sec:Poisgpd}.

\begin{prop}\label{prop:adj}
Let $\cF$ be a projective singular foliation. Denote by $A$   the almost injective Lie algebroid inducing $\cF$. 
Then there exists an adjoint groupoid $G$ of $A$, unique up to isomorphism. This means:  
\begin{enumerate}
\item[i)] $G$ is a Lie groupoid integrating $A$ (in particular $A$ is an integrable Lie algebroid),
\item[ii)] any (source-connected) Lie groupoid integrating $A$ has a surjective morphism onto $G$ which differentiates to the identity at the Lie algebroid level. \end{enumerate}
\end{prop}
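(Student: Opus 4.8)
The plan is to deduce the statement from the general integrability theory of Lie algebroids, using the almost injectivity of $A$ both to guarantee integrability and to keep the isotropy (hence the centers of all integrations) under control.

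First I would settle (i). By hypothesis the anchor $\rho$ of $A$ is injective over a dense open subset, i.e. $A$ is almost injective, and by Debord's integrability theorem \cite{DebordJDG} every almost injective Lie algebroid is integrable. Thus $A$ admits at least one source-connected integration; concretely, the holonomy groupoid $H(\cF)$ provides one.

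Next I would produce the terminal object. Since $A$ is integrable there is a source-simply-connected Lie groupoid $\widetilde{G}$ integrating $A$, unique up to isomorphism, with the property that every source-connected integration $G'$ receives a unique morphism $\widetilde{G}\to G'$ differentiating to $\id_A$; this morphism is a surjective submersion whose kernel $N_{G'}$ is a wide, closed, totally disconnected normal subgroupoid of $\widetilde{G}$ lying in the center $Z(\widetilde{G})$, and conversely every such quotient of $\widetilde{G}$ integrates $A$. I would then set $G:=\widetilde{G}/N$, where $N$ is the maximal bundle of discrete central subgroups for which the quotient is a Lie groupoid with Hausdorff source-fibers. Assertion (i) holds for $G$ because $N$ is discrete, so the projection differentiates to $\id_A$. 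For (ii), any source-connected integration equals $\widetilde{G}/N_{G'}$ with $N_{G'}\subseteq N$ by maximality, whence the identity of $\widetilde{G}$ descends to a surjective morphism $G'=\widetilde{G}/N_{G'}\to \widetilde{G}/N=G$ over $\id_A$; this is precisely terminality, and uniqueness of $G$ up to isomorphism follows formally from the universal property.

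The main obstacle is the very construction of the maximal discrete central normal subgroupoid $N$ and, above all, the smoothness of the quotient $\widetilde{G}/N$: in general the subgroupoid generated by the kernels of all integrations need not be closed or totally disconnected, so one must argue that a genuine maximal element with smooth quotient exists. This is exactly where almost injectivity enters: over the dense regular set $M_{\mathrm{reg}}$ the isotropy Lie algebras $\ker\rho_x$ vanish, so $Z(\widetilde{G})$ is discrete there, and one then controls the center over the singular locus by density and closedness, ensuring that $N$ is an embedded totally disconnected subgroupoid and that $\widetilde{G}/N$ has Hausdorff source-fibers. Alternatively, I expect one can bypass the quotient construction altogether by taking $G:=H(\cF)$ and invoking the minimality of the holonomy groupoid from \cite{AndrSk}: any integration of $A$ realizes $\cF$ through a bisubmersion atlas and therefore admits a canonical surjection onto $H(\cF)$, which identifies $H(\cF)$ as the terminal object and shows it agrees with $\widetilde{G}/N$.
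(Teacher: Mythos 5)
Your first paragraph and your fallback route together reproduce the paper's actual proof, which is a pure citation: the paper invokes \cite[Prop.~3.8]{AndrSk}, packaging Debord's Theorems 1 and 2 of \cite{DebordJDG} (integrability of almost injective algebroids by a quasi-graphoid, uniqueness of such integrations, and the surjection from any integration), with \cite{CrFeLie} as an alternative for item i). So taking $G:=H(\cF)$ and appealing to the bisubmersion formalism of \cite{AndrSk} is not merely an ``alternative you expect to work''; it is the intended argument, and had you committed to it your proposal would be complete at the same level of rigour as the paper's.

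Your primary constructive route, however, has a genuine gap exactly at the step you flag, and the patch you propose does not close it. The assertion that one ``controls the center over the singular locus by density and closedness'' is not a mechanism: almost injectivity makes the isotropy groups of $\widetilde{G}$ discrete over $M_{\mathrm{reg}}$, but over singular leaves they are typically positive-dimensional and abelian. For instance, for the foliation on $\RR^2$ generated by the Euler vector field $x\partial_x+y\partial_y$, the isotropy of the adjoint groupoid at the origin is already a connected one-dimensional group, so $Z(\widetilde{G})$ is \emph{not} totally disconnected, and density of $M_{\mathrm{reg}}$ places no constraint on a central subgroupoid supported over the singular locus. In particular, nothing in your argument rules out that the subgroupoid generated by all kernels accumulates at the identity section over singular points --- which is precisely what happens in the non-almost-injective situation (e.g.\ the trivial bundle of abelian Lie algebras $\RR\times M$, where the kernels $\epsilon\ZZ\times M$ have no maximal element and no adjoint groupoid exists), so any correct proof must use almost injectivity in a sharper way. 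The mechanism that actually works is Debord's quasi-graphoid property near the units, as in Prop.~\ref{lem:explicit} of the paper: if $p\colon\widetilde{G}\to G'$ differentiates to $\id_A$, then $p$ is \'etale, and lifting the unit bisection of $G'$ through $p$ shows that every $g\in\ker p$ carries a local bisection $b$ with $\bt\circ b=\id$; since a neighbourhood $V$ of the unit section of $\widetilde{G}$ is isomorphic to one in a quasi-graphoid, no nontrivial such bisection exists in $V$, giving the \emph{uniform} statement $N\cap V=1_M$ simultaneously for all kernels $N$. This is what makes the maximal kernel $I$ an embedded, $\bs$-discrete normal subgroupoid with smooth quotient, and Debord's uniqueness of quasi-graphoid integrations \cite[Cor.~1, p.~475]{DebordJDG} then identifies $\widetilde{G}/I$ with $H(\cF)$. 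Without this input, the existence of your maximal $N$ --- the pivot of your whole construction --- remains unproved.
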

\begin{proof}
This is  \cite[Prop. 3.8]{AndrSk}, which summarizes result of various authors, among them Debord \cite[Thm. 1 and Thm. 2]{DebordJDG}. The first item also follows from \cite{CrFeLie}.
\end{proof}

Given  {any} singular foliation $\cF$, there is a canonically associated \emph{topological} groupoid $H(\cF)$, called \emph{holonomy groupoid} \cite{AndrSk}. 
 When $\cF$ is {projective}, by  \cite[Prop. 3.9]{AndrSk} there is a canonical  isomorphism
 $$H(\cF)\cong G$$ where $G$ is as in  Prop. \ref{prop:adj}.

\begin{remark}\label{rmk:smoothholgpd}
In particular,  if $\cF$ is {projective} then $H(\cF)$ is a Lie groupoid (integrating $A$). 
The reciprocal also holds. This provides the main motivation for considering projective singular foliations in this note: 

\begin{center}$H(\cF)$ is a Lie groupoid if{f} $\cF$ is projective.\end{center}
For the direction ``$\Rightarrow$'' of this equivalence, recall from \cite[Cor. 2.2]{Debord2013} that for any singular foliation, every $\bs$-fiber $H(\cF)_x$ is a smooth manifold and the fiber $\cF_x$ coincides with the tangent space of $H(\cF)_x$ at the identity. Therefore, when $H(\cF)$ is a Lie groupoid, the fiber $A^{\cF}_x$ has constant dimension as the basepoint $x$ runs through $M$. It follows from Lemma \ref{lem:algcF} that $\cF$ is a projective module.
\end{remark}

We now describe $G=H(\cF)$ explicitly in terms of any Lie groupoid $\Gamma$ integrating $A$. The following proposition follows from \cite[Ex. 3.4(4)]{AndrSk}. Here we provide a   direct proof, which relies only on Debord's work \cite{DebordJDG}. It makes use of the fact that $G$ is a quasi-graphoid.
Recall \cite[Def-Prop. 1]{DebordJDG} that a \emph{quasi-graphoid} is a Lie groupoid 
with the property that for any bisection $b$ defined on an open subset $U$, the condition\footnote{In \cite{AndrSk} a bisection $b$ satisfying
 $\bt\circ b=\id_U$ is said to carry the identity.}
 $\bt\circ b=\id_U$ implies that $b=\epsilon|_U$, where $\epsilon$  denotes the inclusion of the identities in the Lie groupoid.  Here we view bisections as sections of the source map.
\begin{prop}\label{lem:explicit}
Let $\Gamma$  be any Lie groupoid integrating $A$. Then
 $H(\cF)=\Gamma/\sim_{\Gamma}$
where 
$$g_1\sim_{\Gamma}g_2 
\Leftrightarrow \text{$\exists$ local bisections $b_1$ through $g_1$  and  $b_2$ through $g_2$ s.t. $\bt\circ b_1=\bt\circ b_2$.}
$$
\end{prop}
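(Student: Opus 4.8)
The plan is to realize $\Gamma/\sim_{\Gamma}$ as the image of the canonical morphism onto the adjoint groupoid. By Proposition \ref{prop:adj} together with the isomorphism $H(\cF)\cong G$, the groupoid $H(\cF)$ is the adjoint groupoid of $A$, so there is a surjective morphism of Lie groupoids $\phi\colon \Gamma\to H(\cF)$ covering $\id_M$ and differentiating to $\id_A$ at the level of Lie algebroids. The first step is to observe that $\phi$ is a local diffeomorphism: since $\phi$ covers $\id_M$ and induces $\id_A$, its differential is an isomorphism along the unit submanifold $1_M$, and right-translation invariance propagates this to all of $\Gamma$; as $\dim\Gamma=\dim M+\mathrm{rk}(A)=\dim H(\cF)$, the map $\phi$ is \'etale. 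Because $\phi$ covers $\id_M$ it intertwines the source and target maps, so two elements lying in the same fibre of $\phi$ necessarily share their source and target. I will then prove that the fibres of $\phi$ coincide exactly with the classes of $\sim_{\Gamma}$; this identifies $\Gamma/\sim_{\Gamma}$ with $H(\cF)$. Note that for the relation to be source-preserving one must read ``$b_1$ through $g_1$ and $b_2$ through $g_2$'' as passing through $g_1,g_2$ \emph{over a common base point} $x_0:=\bs(g_1)=\bs(g_2)$ (otherwise $\phi(g_1)\neq\phi(g_2)$ already fails); I will make this reading explicit.

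For the implication $g_1\sim_{\Gamma}g_2\Rightarrow \phi(g_1)=\phi(g_2)$ I will use the quasi-graphoid property of $H(\cF)$ due to Debord \cite{DebordJDG}, recalled just before the statement. Given local bisections $b_1,b_2$ of $\Gamma$ through $g_1,g_2$ over a common domain $U$ with $\bt\circ b_1=\bt\circ b_2=:\tau$, push them forward to bisections $\beta_i:=\phi\circ b_i$ of $H(\cF)$; since $\phi$ intertwines targets, $\bt\circ\beta_1=\bt\circ\beta_2=\tau$. Setting $V:=\tau(U)$ and forming $c(y):=\beta_2(\tau^{-1}(y))\cdot\beta_1(\tau^{-1}(y))^{-1}$ for $y\in V$, a direct check gives $\bs\circ c=\id_V$ and $\bt\circ c=\id_V$, so $c$ is a local bisection carrying the identity. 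The quasi-graphoid property then forces $c=\epsilon|_V$, hence $\beta_1=\beta_2$ as bisections; evaluating at $x_0$ yields $\phi(g_1)=\phi(g_2)$.

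For the converse $\phi(g_1)=\phi(g_2)\Rightarrow g_1\sim_{\Gamma}g_2$ I will lift bisections using that $\phi$ is \'etale. Write $h:=\phi(g_1)=\phi(g_2)$ and choose a local bisection $\beta$ of $H(\cF)$ through $h$, defined on a neighbourhood $V$ of $\bs(h)=\bs(g_1)=\bs(g_2)$. Since $\phi$ is a local diffeomorphism, there is near $g_1$ a unique local section $\psi_1$ of $\phi$ with $\psi_1(h)=g_1$; then $b_1:=\psi_1\circ\beta$ is a local bisection of $\Gamma$ with $\phi\circ b_1=\beta$ and $b_1$ through $g_1$ (one checks $\bs\circ b_1=\id$ using that $\phi$ covers $\id_M$). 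Constructing $b_2$ through $g_2$ in the same way, one gets $\bt\circ b_1=\bt\circ\beta=\bt\circ b_2$, so $b_1,b_2$ witness $g_1\sim_{\Gamma}g_2$.

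Combining the two implications, $\phi$ descends to a bijective groupoid morphism $\Gamma/\sim_{\Gamma}\to H(\cF)$; being \'etale and surjective, $\phi$ makes this an isomorphism of Lie groupoids, which proves $H(\cF)=\Gamma/\sim_{\Gamma}$. I expect the main obstacle to be the first step, namely establishing rigorously that $\phi$ is a local diffeomorphism and that bisections of $H(\cF)$ lift through prescribed elements of $\Gamma$, since this is precisely what powers both directions; the quasi-graphoid computation is then a formal manipulation once the convention of composing bisections as sections of $\bs$ is fixed.
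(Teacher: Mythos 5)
Your proposal is correct, but it takes a genuinely different route from the paper's. You work ``downstairs'': invoking the adjoint property (Prop.~\ref{prop:adj} together with $H(\cF)\cong G$) you obtain the canonical surjection $\phi\colon\Gamma\to H(\cF)$, show it is \'etale, and identify its fibres with the $\sim_{\Gamma}$-classes --- the forward inclusion by pushing bisections down to $H(\cF)$ and applying the quasi-graphoid property \emph{there}, the converse by \'etale lifting of bisections through prescribed preimages. The paper instead works ``upstairs'' and deliberately avoids presupposing the morphism $\phi$ (it advertises a direct proof relying only on Debord's work): it rewrites $\Gamma/\sim_{\Gamma}=\Gamma/I$ for the set $I$ of elements admitting identity-carrying bisections, proves by hand that $I$ is a normal, embedded, $\bs$-discrete subgroupoid --- using the quasi-graphoid property only on a neighbourhood of the units of $\Gamma$, transported to all of $I$ by right multiplication by bisections --- concludes via \cite[Thm.~1.20]{GuLi} that $\Gamma/I$ is a Lie groupoid integrating $A$, and only then identifies $\Gamma/I\cong H(\cF)$ by Debord's uniqueness of quasi-graphoid integrations \cite[Cor.~1, p.~475]{DebordJDG}. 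Your route buys brevity and a clean conceptual statement (the equivalence classes are exactly the fibres of the adjoint map), at the cost of taking Prop.~\ref{prop:adj} as a black box, so it does not independently construct the quotient Lie groupoid; the paper's route buys an intrinsic construction of the smooth structure on $\Gamma/\sim_{\Gamma}$. Two points to tighten in your write-up: first, ``right-translation invariance'' must be implemented by right multiplication by \emph{local bisections} (right translation by a single element acts only on one source fibre), which conjugates $\phi$ near a unit into $\phi$ near an arbitrary $g$ and propagates the isomorphy of $d\phi$ from $1_M$ to all of $\Gamma$; second, at the end you should observe that since $\phi$ is an open (\'etale) surjection whose fibres are precisely the $\sim_{\Gamma}$-classes, the quotient topology and smooth structure on $\Gamma/\sim_{\Gamma}$ agree with those of $H(\cF)$, so the induced bijection is genuinely an isomorphism of Lie groupoids. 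Your same-source reading of the relation is indeed the intended one, and it matches the paper's reformulation via $I$: with that reading, $g_1\sim_{\Gamma}g_2$ if{f} $g_1g_2^{-1}\in I$, which is exactly how the paper's quotient identifies elements.
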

\begin{proof}
We first show that $\Gamma/\sim_{\Gamma}$ is a Lie groupoid integrating $A$.
Clearly $\Gamma/\sim_{\Gamma}=\Gamma/I$, where 
\begin{equation}\label{eq:I}
I=\{g\in \Gamma: \text{$\exists$ a local bisection $b\colon U\to \Gamma$ through $g$   s.t. $\bt\circ b=\id_U$}\}.
\end{equation}
It suffices (see for example  \cite[Thm. 1.20]{GuLi}) to prove the following claims: 
\begin{enumerate}
\item Set-theoretically, $I$ is a normal subgroupoid of $\Gamma$ lying in the union of the isotropy groups of $\Gamma$.
\item Topologically, $I$ is an embedded Lie subgroupoid of $\Gamma$
and it is $\bs$-discrete (i.e. the intersection of $I$ with any $\bs$-fiber is discrete).
\end{enumerate}

\paragraph{Proof of Claim a)}: $I$ is a \emph{set-theoretic subgroupoid} of $\Gamma$, since bisections can be composed.
It is clear that every element $g\in I$ lies in an isotropy group, since the existence of a bisection $b$ as in \eqref{eq:I} implies that $\bt(b(y))=y$ for all $y\in U$, in particular for $y=\bs(g)$. Further, $I$ is a \emph{normal} subgroupoid of $\Gamma$: given $g\in I_x$ and $\gamma\in \bs^{-1}(x)$, we have that $\gamma g \gamma^{-1}\in I$ by using the bisection $\beta b \beta^{-1}$, where $b$ is a bisection as in \eqref{eq:I} and $\beta$ is any bisection through $\gamma$.
 
\paragraph{Proof of Claim b)}: Since  $\Gamma$ and  $H(\cF)$ are Lie groupoids integrating the same Lie algebroid, there is an open neighbourhood $V$ of the identity section in $\Gamma$ which is isomorphic to an open neighbourhood   of the identity section in $H(\cF)$, and the latter is  a quasi-graphoid by \cite[Thm. 1, p. 492]{DebordJDG}. Since   $I$ equals the union of the images of all local bisections $b$ as in \eqref{eq:I}, this implies
  $I\cap V=\epsilon(M),$ where
 $\epsilon$  denotes the inclusion of the identities in $\Gamma$.
Let $g\in I$, and take a bisection $b\colon U\to \bs^{-1}(U)$ of $\Gamma$ as in \eqref{eq:I}.
Denote $r_{b}\colon \bs^{-1}(U) \to \bs^{-1}(U)$ the diffeomorphism given by right-multiplication by the bisection $b$. It maps $\epsilon(\bs(g))$ to $g$ and it preserves $I$, since
$b(U)$ lies in the subgroupoid $I$. Applying $r_b$ to
$$I\cap (V\cap \bs^{-1}(U))=\epsilon(U)$$
we obtain that the intersection of $I$ with $r_{b}(V)\cap \bs^{-1}(U)$ (an    open neighbourhood of $g$) is exactly $b(U)$. This shows both that $I$ is an embedded submanifold (hence, a \emph{Lie subgroupoid}) of $\Gamma$ and that $I$ is \emph{discrete}. 

This concludes the proof that $\Gamma/\sim_{\Gamma}$ is a Lie groupoid integrating $A$. Now, $\Gamma/I$ is a quasi-graphoid by construction, and it integrates $A$. The same holds for
 $H(\cF)$, by \cite[Thm. 1, p. 492]{DebordJDG}. The uniqueness statement \cite[Cor. 1, p. 475]{DebordJDG} implies that $\Gamma/I\cong H(\cF)$. 
\end{proof}

\subsection{Singular foliations arising from Lie algebroids}\label{sec:sfLA}

Until the end of \S \ref{sec:proj} we restrict our attention to singular foliations arising from a (not necessarily almost injective) Lie algebroid.
Let $E$ be any Lie algebroid and $\cF:=\rho(\Gamma_c(E))$ the corresponding singular foliation. {In this subsection we single out certain Lie algebras associated to $E$ (see the extension \eqref{eqn:essisotropyextn}).}

Fix $x \in M$. Denote    $F_x:=T_x L_x$ where $L_x$ is the leaf of $\cF$ through $x$. We display three short exact sequences of vector spaces.
\begin{enumerate}
\item Recall that the fiber $E_x$ fits in a short exact sequence of vector spaces
\begin{eqnarray}\label{eqn:isotropyLalgd}
0 \to \g_x \to E_x \stackrel{\rho_x}{\longrightarrow} F_x \to 0.
\end{eqnarray}
Its kernel $\g_x$ becomes a Lie algebra by inheriting the Lie bracket of $\Gamma_c (A)$. It is known as the isotropy Lie algebra of $E$ at $x$.
\item Recall from Notation \ref{not:algdcF} that the fiber $A^{\cF}_{x}$  fits in a short exact sequence of vector spaces:
\begin{eqnarray}\label{eqn:isotropyfol}
0 \to \A^{\cF}_{x} \to A^{\cF}_{x} \stackrel{ev_x}{\longrightarrow} F_x \to 0
\end{eqnarray}
\item The anchor map at the level of sections $\rho : \Gamma_c (E) \to \cF$ induces a short exact sequence of vector spaces, that relate the two previous ones:
\begin{eqnarray}\label{eqn:essisotropy}
0 \to \h_x \to E_x \stackrel{\widetilde{\rho}_x}{\longrightarrow} A^{\cF}_{x} \to 0.
\end{eqnarray}
Indeed $\rho$
sends the ideal $I_x\Gamma_c (E)$ onto the ideal $I_x\cF$, and $E_x=\Gamma_c (E) / I_x\Gamma_c (E)$. Explicitly, given $e\in E_x$ we have $\widetilde{\rho}_x(e)=
 \rho(\tilde{e}) \text{ mod }I_x\cF$,
where $\tilde{e}\in \Gamma(E)$ is any extension  of $e$.
\end{enumerate}

It turns out that   $\h_x$   plays a crucial role, so it is worth giving it a name.  We will justify this name in \S\ref{sec:integralgds}.

\begin{definition}\label{dfn:essisotropy} 
The kernel $\h_x$ of $\widetilde{\rho}_x\colon E_x \to A^{\cF}_{x}$ is called the \emph{germinal isotropy} of $E$ at $x \in M$. 
\end{definition}

\begin{lemma}\label{lem:nicechar} 
\begin{enumerate}
\item[i)] We have
$$\h_x=\{e\in E_x:\exists \text{ an extension }\tilde{e}\in \Gamma(E) \text{ s.t. } \rho(\tilde{e})\equiv 0\}.$$
\item[ii)]$ \h_x$ is a Lie subalgebra of $\g_x$.
\end{enumerate}
\end{lemma}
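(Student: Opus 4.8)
The plan is to prove the two assertions of Lemma \ref{lem:nicechar} in order, deriving the Lie-algebra statement ii) from the explicit description in i).

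\textbf{Proof of i).} First I would unwind the definition of $\h_x$ as the kernel of $\widetilde{\rho}_x\colon E_x\to A^{\cF}_x$. Recall from \eqref{eqn:essisotropy} that for $e\in E_x$ and any extension $\tilde{e}\in\Gamma(E)$, one has $\widetilde{\rho}_x(e)=\rho(\tilde{e})\bmod I_x\cF$. Hence $e\in\h_x$ precisely when $\rho(\tilde{e})\in I_x\cF$ for one (equivalently, every) extension $\tilde{e}$. The goal is to upgrade the membership $\rho(\tilde{e})\in I_x\cF$ to the existence of a \emph{different} extension whose image vanishes identically near $x$, i.e. $\rho(\tilde{e}')\equiv 0$. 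The inclusion ``$\supseteq$'' is immediate: if some extension satisfies $\rho(\tilde{e})\equiv 0$, then certainly $\rho(\tilde{e})\in I_x\cF$, so $e\in\h_x$. For the nontrivial inclusion ``$\subseteq$'', suppose $e\in\h_x$, so $\rho(\tilde{e})=\sum_i f_i X_i$ with $f_i\in I_x$ (functions vanishing at $x$) and $X_i\in\cF$. Since $\cF=\rho(\Gamma_c(E))$, write each $X_i=\rho(\sigma_i)$ for $\sigma_i\in\Gamma_c(E)$. Then I would set $\tilde{e}':=\tilde{e}-\sum_i f_i\sigma_i$. Because each $f_i$ vanishes at $x$, this is still an extension of $e$ (it agrees with $\tilde{e}$ in the fiber over $x$), and by construction $\rho(\tilde{e}')=\rho(\tilde{e})-\sum_i f_i\rho(\sigma_i)=0$ identically. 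This proves i).

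\textbf{Proof of ii).} Using i), I would first check $\h_x\subseteq\g_x$. Indeed if $e\in\h_x$ then by i) there is an extension $\tilde{e}$ with $\rho(\tilde{e})\equiv 0$, so in particular $\rho_x(e)=\rho(\tilde{e})(x)=0$, placing $e$ in the kernel $\g_x$ of $\rho_x$ from \eqref{eqn:isotropyLalgd}. It remains to verify that $\h_x$ is closed under the Lie bracket of $\g_x$. Recall the bracket on $\g_x$ is inherited from $\Gamma_c(E)$: for $e,e'\in\g_x$, one extends to sections and takes the class of $[\tilde{e},\tilde{e}']$ in $E_x$. Given $e,e'\in\h_x$, choose by part i) extensions $\tilde{e},\tilde{e}'$ with $\rho(\tilde{e})\equiv 0\equiv\rho(\tilde{e}')$ near $x$. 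Since $\rho$ is a Lie algebroid morphism, $\rho([\tilde{e},\tilde{e}'])=[\rho(\tilde{e}),\rho(\tilde{e}')]=0$ near $x$. Thus the extension $[\tilde{e},\tilde{e}']$ of the bracket has identically vanishing image near $x$, so by part i) its class lies in $\h_x$. This shows $\h_x$ is a Lie subalgebra of $\g_x$.

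\textbf{Main obstacle.} The only delicate point I anticipate is the equivalence ``for one extension'' versus ``for every extension'' underlying the definition of $\h_x$ and the correction argument in part i): I must ensure that subtracting $\sum_i f_i\sigma_i$ genuinely preserves the value in the fiber $E_x$, which hinges on $f_i\in I_x$ so that $f_i\sigma_i\in I_x\Gamma_c(E)$ and hence represents $0$ in $E_x=\Gamma_c(E)/I_x\Gamma_c(E)$. Everything else is a routine consequence of $\rho$ being a morphism of Lie algebroids and of the definitions of the three exact sequences; in particular, the bracket compatibility in ii) reduces to the functoriality $\rho[\,\cdot\,,\cdot\,]=[\rho(\cdot),\rho(\cdot)]$ applied to the well-chosen extensions provided by i).
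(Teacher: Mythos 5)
Your proposal is correct and takes essentially the same route as the paper's proof: in i) your section $\sum_i f_i\sigma_i$ is precisely the paper's corrector $\hat{e}\in I_x\Gamma(E)$ chosen via the identity $I_x\cF=\rho(I_x\Gamma(E))$, which you simply make explicit by writing out generators, and your subtraction $\tilde{e}-\sum_i f_i\sigma_i$ is the paper's $\tilde{e}-\hat{e}$. In ii) you spell out what the paper leaves as ``clear from i)'' --- namely that the anchor being a Lie algebroid morphism makes $[\tilde{e},\tilde{e}']$ an extension of $[e,e']$ with vanishing image --- so there is no gap, only added detail.
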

\begin{proof}
i) We prove only the inclusion ``$\subset$'', for the other one is trivial. Let $e\in \ker(\widetilde{\rho}_x)$, then for one (and therefore any) extension  $\tilde{e}\in \Gamma(E)$ we have $\rho(\tilde{e})\in I_x\cF=\rho(I_x\Gamma(E)))$. Therefore there is $\hat{e}\in I_x\Gamma(E)$ with 
$\rho(\tilde{e})=\rho(\hat{e})$. 
Since $\hat{e}$ vanishes at $x$,
$\tilde{e}-\hat{e}\in \Gamma(E)$ is also an extension of $e$, and maps identically to zero under $\rho$.

ii) Clearly $\h_x$ is contained in $\g_x=ker(\rho_x)$. {Item i)} makes it clear that $\h_x$ is closed under the Lie bracket of $\g_x$.
\end{proof}

 \begin{remark}
Lemma \ref{lem:nicechar}  i) shows that $x\mapsto \dim(\h_x)$ is a lower semicontinuous function on $M$. In particular, the dimension of $\h_x$ jumps in the opposite way as the dimension of the isotropy Lie algebras $\g_x$. 
\end{remark}
 
Now, it is quite easy to see that extensions \eqref{eqn:isotropyLalgd}, \eqref{eqn:isotropyfol} and \eqref{eqn:essisotropy} fit into the following commutative diagram of vector spaces and Lie algebras:
\[
\xymatrix{&0\ar[d]&0\ar[d]&&\\
& \h_x  
\ar@{=}[r] \ar[d] &  \h_x \ar[d]&& \\ 
0\ar[r] & \g_x \ar[r] \ar[d] & E_x \ar[r]^{\rho_x} \ar[d]_{\widetilde{\rho}_x} & F_x  \ar@{=}[d] \ar[r]&0 \\
0\ar[r] & \A^{\cF}_{x} \ar[d]\ar[r] & A^{\cF}_{x}  \ar[d]\ar[r]^{ev_x} & F_x \ar[r]&0\\
&0 &0 &&
}
\] 

In particular, the obvious diagram chasing shows that the kernels of the extensions \eqref{eqn:isotropyLalgd}, \eqref{eqn:isotropyfol} and \eqref{eqn:essisotropy} form an extension of Lie algebras themselves:
\begin{eqnarray}\label{eqn:essisotropyextn}
0 \to \h_x \to \g_x \stackrel{\widetilde{\rho}_x|_{\g_x}}{\longrightarrow} \A^{\cF}_{x} \to 0.
\end{eqnarray}

The germinal isotropy Lie algebra differs from the  isotropy  Lie algebra only at singular leaves.

\begin{prop}\label{prop:essisotrsing}
If the leaf $L_x$ at $x \in M$ is regular, then $\h_x=\g_x$. 
 \end{prop}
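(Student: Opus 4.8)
The plan is to deduce the statement from the exact sequence of Lie algebras \eqref{eqn:essisotropyextn},
$$0 \to \h_x \to \g_x \stackrel{\widetilde{\rho}_x|_{\g_x}}{\longrightarrow} \A^{\cF}_x \to 0,$$
which identifies the quotient $\g_x/\h_x$ with the foliation isotropy $\A^{\cF}_x$. Since $\h_x\subseteq\g_x$ always holds, proving $\h_x=\g_x$ is equivalent to proving that $\A^{\cF}_x$ vanishes. Moreover, in the sequence \eqref{eqn:isotropyfol} the map $ev_x\colon A^{\cF}_x\to F_x$ is surjective and $\A^{\cF}_x=\ker(ev_x)$, so the vanishing of $\A^{\cF}_x$ is in turn equivalent to the numerical equality $\dim A^{\cF}_x=\dim F_x$. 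Thus the whole proposition reduces to a local statement about $\cF$ alone, with no further input about $E$.

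The core of the argument is therefore to show that, at a regular point $x$, the module $\cF$ is locally free of rank $r:=\dim F_x$, so that $A^{\cF}_x=\cF/I_x\cF$ has dimension exactly $r$. First I would choose local generators $X_1,\dots,X_n$ of $\cF$ on a neighbourhood $U$ of $x$, which exist since $\cF$ is locally finitely generated. As they generate $\cF$, their evaluations span $F_y=T_yL_y$ at every $y\in U$; in particular $X_1(x),\dots,X_n(x)$ span the $r$-dimensional space $F_x$, so after relabelling $X_1(x),\dots,X_r(x)$ form a basis of $F_x$ and hence stay linearly independent on a smaller neighbourhood. Here is where regularity enters: since $\dim F_y=r$ for all $y$ near $x$, the independent vectors $X_1(y),\dots,X_r(y)$ already span $F_y$, so each remaining generator $X_j$ (for $j>r$) is a $C^{\infty}$-combination of $X_1,\dots,X_r$ on this neighbourhood. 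Consequently $\cF|_U=\sum_{i=1}^{r}C^{\infty}_c(U)\,X_i$ is free of rank $r$, whence $\dim A^{\cF}_x=r=\dim F_x$ and therefore $\A^{\cF}_x=0$.

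Combining the two steps yields $\h_x=\g_x$. The only delicate point is the local freeness of $\cF$ at a regular point, i.e. that the minimal number of local generators drops exactly to $\dim F_x$; once this is established the conclusion is immediate from \eqref{eqn:essisotropyextn}. I should note that this local freeness (equivalently, the vanishing of the foliation isotropy $\A^{\cF}_x$ at a regular leaf) is precisely the local structure contained in \cite[Prop. 1.5]{AndrSk}, so one may alternatively invoke that result in place of the explicit frame argument and keep the proof to a single line.
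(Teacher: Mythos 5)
Your proof is correct, but it is not the paper's primary argument: it is a fleshed-out version of the one-sentence ``alternative proof'' the paper appends to Prop.~\ref{prop:essisotrsing} (``notice that $\A^{\cF}_{x}=0$ and apply the short exact sequence \eqref{eqn:essisotropyextn}''). The paper's main proof works directly on the Lie algebroid side: since $L_x$ is regular, the anchor $\rho$ has constant rank on a neighbourhood of the leaf, so by \eqref{eqn:isotropyLalgd} the kernels $\g_y$ assemble into a vector subbundle there; any $e\in\g_x$ then extends to a local section of this subbundle, i.e.\ to $\tilde e\in\Gamma(E)$ with $\rho(\tilde e)\equiv 0$, and Lemma \ref{lem:nicechar}~i) gives $e\in\h_x$. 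Your route instead pushes all the work onto the foliation side, proving that $\cF$ is locally free of rank $\dim F_x$ at a regular point via the pointwise-independent frame $X_1,\dots,X_r$ (the smoothness of the coefficients expressing the redundant generators follows from Cramer's rule on an invertible $r\times r$ minor, as you implicitly use), so that $ev_x\colon A^{\cF}_x\to F_x$ is an isomorphism and $\A^{\cF}_x=0$; this vanishing is indeed essentially contained in \cite[Prop.~1.5]{AndrSk}, so your closing remark is justified. The trade-off: the paper's extension argument is shorter and never mentions $A^{\cF}_x$, while yours cleanly isolates the statement's purely foliation-theoretic content ($\A^{\cF}_x=0$ at regular points, a fact about $\cF$ alone, independent of $E$) and then lets the diagram \eqref{eqn:essisotropyextn} do the rest. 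Both are complete proofs; no gap.
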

\begin{proof} 
We only need to prove $\g_x \subseteq \h_x$. 
The extension \eqref{eqn:isotropyLalgd} show that there is a neighbourhood $W$ of the leaf $L_x$ such that the dimension of the isotropy Lie algebras $\g_y$ is constant for every $y \in W$.
So any element $e\in\g_x$ can be extended to a section of $E$ lying in isotropy Lie algebras, therefore by definition $e$ lies in $\h_x$. An alternative  proof consists in 
noticing that $\A^{\cF}_{x}=0$ and applying the short exact sequence \eqref{eqn:essisotropyextn}.
\end{proof}

\begin{remark}\label{rem:cleanextn}
Let $L_x$ be the leaf of $\cF$ at $x \in M$. Recall that the restriction $E_{L_x}$ of $E$ to $L_x$ is a transitive Lie algebroid. Using   Notation \ref{not:algdcF} we see that it fits in an extension of vector bundles 
\begin{eqnarray}\label{eqn:cleanextn}
0 \to \h_{L_x} \to E_{L_x} \stackrel{\widetilde{\rho}}{\longrightarrow} A^{\cF}_{L_x} \to 0
\end{eqnarray}
where $\h_{L_x}$ is the bundle of Lie algebras $\h_{L_x}=\bigcup_{y \in L_x}\h_y$.
In the terminology of Brahic \cite{Brahic}, the fact that \eqref{eqn:essisotropyextn} is a short exact sequence says that \eqref{eqn:cleanextn} is a \emph{clean extension}. We will need this fact in \S \ref{subsec:extensions}.
\end{remark}

\subsection{Projective singular foliations arising from Lie algebroids}
\label{subsec:LAprojs}

{We specialize the setting of the previous subsection to projective singular foliations.}

This proposition follows immediately from Lemma \ref{lem:algcF} and Def. \ref{dfn:essisotropy}:
\begin{prop}\label{prop:projE} Let $E$ be any Lie algebroid
and  $\cF:=\rho(\Gamma_c(E))$ the associated singular foliation. Then 
$\cF$ is   projective if{f} $\dim(\h_x)$ is a constant function of $x\in M$.
\end{prop}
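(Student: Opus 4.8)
The plan is to connect Proposition~\ref{prop:projE} directly to the characterization of projectivity given in Lemma~\ref{lem:algcF}~i), which states that $\cF$ is projective if and only if $\dim(A^{\cF}_{x})$ is constant in $x$. So the entire task reduces to showing that the function $x\mapsto \dim(A^{\cF}_{x})$ is constant precisely when $x\mapsto \dim(\h_x)$ is constant.

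First I would invoke the short exact sequence \eqref{eqn:essisotropy}, namely $0\to \h_x\to E_x\xrightarrow{\widetilde{\rho}_x} A^{\cF}_{x}\to 0$, which was established earlier in \S\ref{sec:sfLA}. Since $E$ is a vector bundle, $\dim(E_x)=\mathrm{rk}(E)$ is constant over $M$. Taking dimensions in this exact sequence yields the pointwise identity $\dim(A^{\cF}_{x})=\mathrm{rk}(E)-\dim(\h_x)$ for every $x\in M$. From this single linear-algebra relation the equivalence is immediate: $\dim(A^{\cF}_{x})$ is constant in $x$ if and only if $\dim(\h_x)$ is constant in $x$, because the two differ by the constant $\mathrm{rk}(E)$. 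Combining this with Lemma~\ref{lem:algcF}~i) gives exactly the assertion of the proposition.

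This proof is essentially a one-line consequence of results already in place, which is why the paper flags it as following ``immediately.'' There is no serious obstacle; the only point requiring a moment's care is making sure the exact sequence \eqref{eqn:essisotropy} is genuinely an exact sequence of finite-dimensional vector spaces at every point (so that the rank-nullity bookkeeping is valid), but this is exactly what was verified when \eqref{eqn:essisotropy} was introduced, and the finite-dimensionality of $A^{\cF}_{x}$ is recorded in Notation~\ref{not:algdcF}. I would therefore present the argument compactly: cite \eqref{eqn:essisotropy}, note that $\dim E_x$ is constant, conclude that constancy of $\dim(\h_x)$ is equivalent to constancy of $\dim(A^{\cF}_{x})$, and then apply Lemma~\ref{lem:algcF}~i) together with Definition~\ref{dfn:essisotropy} to finish.
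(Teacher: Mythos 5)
Your proposal is correct and coincides with the paper's own (unwritten) argument: the paper simply asserts that the proposition ``follows immediately from Lemma \ref{lem:algcF} and Def. \ref{dfn:essisotropy}'', and the intended content is exactly your rank--nullity observation $\dim(A^{\cF}_{x})=\mathrm{rk}(E)-\dim(\h_x)$ coming from the exactness of \eqref{eqn:essisotropy}. Nothing is missing, and your remark about finite-dimensionality at each point is the right (and only) point of care.
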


\begin{remark}\label{rem:alminj}
 Denote $\h=\bigcup_{x\in M} \h_x$, a bundle of Lie algebras. We have an isomorphism of Lie algebroids $$E/\h\cong A^{\cF},$$
where the latter was introduced in   Lemma \ref{lem:algcF} and is an   almost injective Lie algebroid inducing $\cF$.
To see this, notice that  the map $\tilde{\rho}\colon E\to A^{\cF}$  defined pointwise in 
eq. \eqref{eqn:essisotropy}
is a morphism of Lie algebroids with kernel $\h$.
 \end{remark}

We present an example of projective singular foliation arising from a Lie algebroid. A large class of examples, arising for   cotangent Lie algebroids of Poisson manifolds, will be presented in \S \ref{sec:examples}.

\begin{ex}\label{ex:fact}
Let $M$ be any manifold, $\mathfrak{k}$ a Lie algebra, and $\sigma\colon \mathfrak{k} \to \vX(M)$ a Lie algebra  homomorphism, i.e. an infinitesimal (right) action of the Lie algebra $\mathfrak{k}$ on $M$. Assume that the vector bundle map  $\rho \colon \mathfrak{k}\times M\to TM, (v,p)\mapsto 
\sigma_p(v)$ is fiber-wise injective\footnote{An extreme case is when the action is almost free.} on a dense subset of $M$. This means exactly that 
the  transformation Lie algebroid $\mathfrak{k}\times M$ is almost injective. Hence 
the singular foliation induced by the infinitesimal action, $\cF_{\mathrm{act}}:=\langle \sigma(v):v\in \mathfrak{k}\rangle$, is projective.

For instance, consider the action of $\CC^*$ on $\CC$ by multiplication, which has the origin as a fixed point and is free elsewhere. 
It is easy to see that the action induces the singular foliation $\cF$ generated by the Euler vector field $E:=x\partial_x+y\partial_y$ and by the ``rotation'' vector field $X:=x\partial_y-y\partial_x$. Similarly, consider the action of $\CC^*$ on $\CC^2$ given by $z\cdot(w_1,w_2)=(zw_1,z^kw_2)$ for some $k\in \ZZ$. The induced singular foliation is generated by $E_1+kE_2$ and $X_1+kX_2$, where the indices denote the copy of $\CC$ on which the vector fields are defined. Both are projective singular foliations.  
\end{ex}

\begin{remark}\label{rem:lafol}

\begin{itemize}
\item[a)]   Let $\cF$ be a  singular foliation arising as above from a Lie algebroid. As expected, the fact that $\cF$ is projective or not really depends on $\cF$, and not just on the underlying partition of the manifold $M$ into leaves.

 For example, take $M=\RR^2$. Consider the singular foliation $\cF_1$ induced by (the transformation Lie algebroid of)  the action of $\CC^*$ on $\CC$ by multiplication, and $\cF_2$ induced by the action of $GL(2,\RR)$ on $\CC=\RR^2$. Both singular foliations induce
the same decomposition into two leaves of $\RR^2$, namely the origin and $\RR^2-\{0\}$. But $\cF_1$ is projective (see  Ex. \ref{ex:fact}), while $\cF_2$ is not. To see the latter, recall from \cite[Prop. 1.4]{AndrSk} that the fiber $(\cF_2)_0 = \cF_2 / I_0 \cF_2$ is the Lie algebra of $GL(2,\RR)$, whence it has dimension $4$. However, at every point $x \neq 0$ of $\R^2$ the fiber $(\cF_2)_x$ is $T_x\RR^2$, whence it has dimension $2$. So the dimension of $\h_x$ is not constant, whence Prop. \ref{prop:projE} implies that $\cF_2$ is not projective. 

\item[b)] For  projective singular foliations $\cF$ arising from the cotangent Lie algebroid of a Poisson manifold, we will see in Thm. \ref{thm:aregpois} that there is a regular foliation containing the regular leaves of $\cF$ as open subsets. This is not the case for projective singular foliations arising from arbitrary Lie algebroids (take for instance the singular foliation on $\RR^2$  generated by the Euler vector field $x\partial_x+y\partial_y$).
\end{itemize}
\end{remark}

\section{Almost regular Poisson {structures}}\label{sec:aregp}

{At this stage we bring Poisson geometry into the picture. That is, we consider the singular foliation associated to a Poisson manifold (via the cotangent Lie algebroid) and focus on Poisson manifolds for which this  singular foliation is projective.}

\subsection{Singular foliations arising from Poisson structures}\label{subsec:pois}

In \S \ref{subsec:LAprojs} we considered projective singular foliations arising from
Lie algebroids. Now we specialize even further: we let $(M,\pi)$ be a Poisson manifold and  $$\cF:=\rho(\Gamma_c(T^*M))$$ the corresponding singular foliation, where $T^*M$ is the Lie algebroid of the Poisson manifold. The anchor $\rho \colon T^*M\to TM$ is given by contraction with $\pi$ and is therefore skew-symmetric, 
implying that $\ker(\rho_x)=(\mathrm{Im}(\rho_x))^{\circ}$ for all points $x$. So from Lemma \ref{lem:nicechar} i) we get the following useful characterization:
\begin{equation}\label{eq:nicecharPois}
\h_x =\{\xi\in T^*_xM:\exists \text{ a local extension to a $1$-form }\tilde{\xi} \text{ s.t. } \tilde{\xi}|_{TL}= 0 \text{ for any symplectic leaf $L$}\}.
\end{equation}
 
In fact, for Poisson manifolds we can specify  the position of $\h_x$ inside the Lie algebra $\g_x:=Ker(\rho_x)$, improving Lemma \ref{lem:nicechar} ii).

\begin{prop}\label{lem:center}
Let $(M,\pi)$ be a Poisson manifold, and consider the Lie algebra
$\g_x:=Ker(\rho_x)$ for $x\in M$. The germinal isotropy Lie algebra $\h_x$ lies in $Z(\g_x)$, the center of $\g_x$.
\end{prop}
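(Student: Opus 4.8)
The plan is to show directly that $[\xi,\eta]=0$ in $\g_x$ for every $\xi\in\h_x$ and every $\eta\in\g_x$, exploiting the fact that an element of the germinal isotropy admits an especially convenient extension. Recall that the Lie bracket on the cotangent Lie algebroid $T^*M$ is the Koszul bracket
$$[\alpha,\beta]_\pi=\mathcal{L}_{\sharp\alpha}\beta-\mathcal{L}_{\sharp\beta}\alpha-d(\pi(\alpha,\beta))=\mathcal{L}_{\sharp\alpha}\beta-\iota_{\sharp\beta}\,d\alpha,$$
and that the bracket on the isotropy Lie algebra $\g_x=\ker(\rho_x)$ is computed by choosing extensions $\tilde\alpha,\tilde\beta\in\Gamma(T^*M)$ of $\alpha,\beta\in\g_x$, forming $[\tilde\alpha,\tilde\beta]_\pi$, and restricting to $x$; this value lies in $\g_x$ and is independent of the chosen extensions (the standard isotropy construction of \eqref{eqn:isotropyLalgd}). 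Because of this independence I am free to compute the bracket using whatever extension of $\xi$ is most convenient.

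Given $\xi\in\h_x$, Lemma \ref{lem:nicechar} i) (equivalently the characterization \eqref{eq:nicecharPois}) provides an extension $\tilde\xi\in\Gamma(T^*M)$ with $\tilde\xi|_x=\xi$ and $\sharp\tilde\xi\equiv 0$ on a neighbourhood of $x$. Let $\tilde\eta$ be an arbitrary extension of $\eta\in\g_x$. Feeding $\alpha=\tilde\xi$ and $\beta=\tilde\eta$ into the second form of the Koszul bracket, the term $\mathcal{L}_{\sharp\tilde\xi}\tilde\eta$ vanishes identically since $\sharp\tilde\xi\equiv 0$, so that
$$[\tilde\xi,\tilde\eta]_\pi=-\iota_{\sharp\tilde\eta}\,d\tilde\xi.$$

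It then remains to restrict to $x$. The covector $(\iota_{\sharp\tilde\eta}\,d\tilde\xi)_x$ equals $\iota_{(\sharp\tilde\eta)_x}(d\tilde\xi)_x$, and $(\sharp\tilde\eta)_x=\rho_x(\eta)=0$ because $\eta\in\g_x=\ker(\rho_x)$. Hence $[\tilde\xi,\tilde\eta]_\pi|_x=0$, i.e. $[\xi,\eta]=0$ in $\g_x$. Since $\eta\in\g_x$ was arbitrary, $\xi\in Z(\g_x)$, which is the assertion.

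The computation is short, so there is no serious obstacle; the only points that need care are (a) the bookkeeping of sign conventions in passing from the symmetric to the asymmetric form of the Koszul bracket, where the skew-symmetry of $\sharp$ (equivalently $\ker\rho_x=(\mathrm{Im}\,\rho_x)^{\circ}$) is used, and (b) confirming that the isotropy bracket is well-defined independently of the chosen extensions, so that the convenient choice $\sharp\tilde\xi\equiv 0$ is legitimate. It is worth noting that the argument genuinely uses the cotangent (Poisson) structure through the explicit Koszul formula: for a general Lie algebroid $\h_x$ is only a subalgebra of $\g_x$ (Lemma \ref{lem:nicechar} ii)), and centrality is the Poisson-specific strengthening.
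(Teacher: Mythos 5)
Your proof is correct and takes essentially the same route as the paper's: both choose an extension $\tilde\xi$ of $\xi\in\h_x$ with $\sharp\tilde\xi\equiv 0$ (legitimate since the isotropy bracket on $\g_x$ is extension-independent when both arguments lie in $\ker\rho_x$), compute the Koszul bracket, and evaluate at $x$ using $\rho_x\eta=0$. The only cosmetic difference is that you pass to the asymmetric form $[\alpha,\beta]=\cL_{\sharp\alpha}\beta-\iota_{\sharp\beta}\,d\alpha$ before substituting, which absorbs upfront the exact term $d\langle\sharp\tilde\eta,\tilde\xi\rangle$ that the paper disposes of separately via the same skew-symmetry identity $\langle\sharp\tilde\eta,\tilde\xi\rangle=-\langle\sharp\tilde\xi,\tilde\eta\rangle\equiv 0$.
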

\begin{proof}
Fix $x\in M$, a local 1-form $\alpha$ with $\rho\alpha\equiv 0$, and a local 1-form $\beta$ with $\beta_x\in \g_x$ (i.e. $\rho_x\beta_x=0$).  By definition of bracket on the Lie algebroid $T^*M$ 
we have $$[\alpha,\beta]=\cL_{\rho\alpha} \beta-\cL_{\rho\beta} \alpha-d\langle \rho\alpha,\beta \rangle=-\cL_{\rho\beta} \alpha
=-\iota_{\rho\beta}d\alpha-d\langle {\rho\beta},\alpha \rangle.
$$ where in the second equality two terms vanish because of $\rho\alpha\equiv 0$. 
This vanishes  at the point $x$  since $\rho_x\beta_x=0$ and
$-\langle {\rho\beta},\alpha \rangle=\langle {\rho\alpha},\beta \rangle\equiv 0$.
\end{proof}

\begin{remark}
\begin{itemize}
\item[a)]
 When $L_x$ is a regular leaf $\g_x$ is  abelian, so the above statement  is substantial only when $L_x$ is a singular leaf.
\item[b)] As expected,
$\h_x\neq  Z(\g_x)$ usually: when $M=\RR^2$ and $\pi=x^2\partial_x\wedge \partial_y$, we have $\h_0=0$, and the Lie algebra $\g_0$ is abelian (because $[dx,dy]=d\{x,y\}=2xdx$).
\end{itemize} 
\end{remark}

As an example, we compute germinal isotropies at the origin for linear Poisson manifolds.

\begin{prop}\label{prop:linPois}
Let   $(\g,[\;,\;]_{\g})$ be a finite dimensional Lie algebra. 
Consider the Poisson manifold $\g^*$, endowed with linear bivector field $\pi$ corresponding to the Lie-Poisson structure.

 At the origin $0\in \g^*$ one has $\h_0=Z(\g)$.
\end{prop}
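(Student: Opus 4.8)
The plan is to prove the two inclusions $\h_0 \subseteq Z(\g)$ and $Z(\g) \subseteq \h_0$ separately. For the first inclusion I would begin by identifying the isotropy Lie algebra $\g_0 = \ker(\rho_0)$ at the origin. Since the bivector $\pi$ is linear it vanishes at $0$, so $\rho_0 = 0$ and hence $\g_0 = T^*_0\g^*$, which is canonically identified with $\g$. Moreover, under this identification the Lie-algebroid bracket on $\g_0$ is the original bracket of $\g$: for $v, w \in \g$ the differentials of the linear functions $\ell_v, \ell_w$ (where $\ell_v(\mu) = \langle \mu, v\rangle$) are the constant $1$-forms $v, w$, and $[d\ell_v, d\ell_w] = d\{\ell_v,\ell_w\} = d\ell_{[v,w]_\g}$, whose value at $0$ is $[v,w]_\g$. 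Thus $\g_0 = \g$ as Lie algebras, and Proposition \ref{lem:center} immediately gives $\h_0 \subseteq Z(\g_0) = Z(\g)$.

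For the reverse inclusion I would use the explicit characterization of $\h_0$ from Lemma \ref{lem:nicechar} i): an element $\xi \in T^*_0\g^* = \g$ lies in $\h_0$ as soon as it admits an extension to a global $1$-form whose image under $\rho$ vanishes identically. Given $v \in Z(\g)$, the natural candidate is the constant $1$-form $\tilde\xi := d\ell_v \equiv v$. Its image under the anchor is the Hamiltonian vector field $\rho(\tilde\xi) = X_{\ell_v}$, and pairing against an arbitrary linear function $\ell_w$ at any point $\mu \in \g^*$ gives $\langle X_{\ell_v}(\mu), w\rangle = \{\ell_v,\ell_w\}(\mu) = \langle \mu, [v,w]_\g\rangle$. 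Since $v$ is central, $[v,w]_\g = 0$ for every $w$, so $X_{\ell_v} \equiv 0$ on all of $\g^*$. Hence $\tilde\xi$ is an extension of $\xi = v$ with $\rho(\tilde\xi) \equiv 0$, and Lemma \ref{lem:nicechar} i) yields $v \in \h_0$. Together with the first inclusion this gives $\h_0 = Z(\g)$.

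As for difficulty, the argument is essentially a direct computation once the identifications are in place, so I do not expect a genuine obstacle. The only points requiring care are the identification of $\g_0$ with $\g$ as a Lie algebra (so that ``center'' refers to the correct object) and the observation --- built into the linearity of $\pi$ --- that centrality of $v$ forces the coadjoint action $\mathrm{ad}^*_v$ to vanish \emph{everywhere} on $\g^*$, not merely at the origin. This global vanishing is precisely what allows the constant $1$-form to serve as a genuine extension defined on all of $\g^*$, which is exactly what the characterization in Lemma \ref{lem:nicechar} i) demands.
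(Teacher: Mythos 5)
Your proof is correct, but it handles the nontrivial inclusion $\h_0 \subseteq Z(\g)$ by a different mechanism than the paper. The paper's proof is a single chain of equivalences built on the constant extension $\tilde e$ of $e\in T_0^*\g^*=\g$: namely $e \in \h_0 \Leftrightarrow \rho(\tilde e) \in I_0\cF \Leftrightarrow \rho(\tilde e)\equiv 0 \Leftrightarrow e \in Z(\g)$, where the middle step is a degree argument --- since $\cF$ is generated by linear vector fields, $I_0\cF$ consists of vector fields vanishing to second order at the origin, and the linear vector field $\rho(\tilde e)$ can belong to it only if it vanishes identically. Your easy inclusion $Z(\g)\subseteq \h_0$ (constant one-form, global vanishing of $X_{\ell_v}$ for central $v$, then Lemma~\ref{lem:nicechar}~i)) coincides with the corresponding implication in the paper. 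For the converse, instead of the degree count you invoke the general structural fact of Prop.~\ref{lem:center} (that $\h_x\subseteq Z(\g_x)$ for any Poisson manifold) together with the identification of $\g_0 = T^*_0\g^*$ with $\g$ \emph{as a Lie algebra}, which you verify correctly via $[d\ell_v,d\ell_w]=d\{\ell_v,\ell_w\}=d\ell_{[v,w]_\g}$. This route is legitimate --- indeed the paper itself observes in Remarks~\ref{rem:linearization} that the inclusion ``$\subset$'' always holds by Prop.~\ref{lem:center} --- and what it buys is that no analysis of the module $I_0\cF$ is needed at all. What the paper's argument buys in exchange is self-containedness in the linear setting and slightly more information: it shows that, for the constant extension, the a priori weaker condition $\rho(\tilde e)\in I_0\cF$ already forces $\rho(\tilde e)\equiv 0$, i.e.\ the grading of the Lie--Poisson structure collapses the quotient test in the definition of $\h_0$ to a pointwise vanishing statement.
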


\begin{proof}
Let $e\in T^*_0\g^*=\g$. Using the definition of $\h_0$ (see Def. \ref{dfn:essisotropy})
and the extension of $e$ to a constant 1-form,
we have $$e\in \h_0 \Leftrightarrow {\rho} (e) \in I_0\cF\Leftrightarrow {\rho} (e)\equiv0 \Leftrightarrow e\in Z(\g).$$
Here the second equivalence holds because the symplectic foliation $\cF$ is generated by linear vector fields, so $I_0\cF$ is generated by vector fields that vanish quadratically at the origin, while ${\rho}(e)$ is a linear vector field. The third equivalence holds because $({\rho}(e))(f)=[e,f]_{\g}$ for any $f\in C^{\infty}_{lin}(\g^*)=\g$.
\end{proof} 

\begin{remarks}[On linearizability]\label{rem:linearization}

\begin{enumerate}
\item Let $(M,\pi)$ be any Poisson manifold  and $x\in M$.  
Prop. \ref{prop:linPois} shows that for the linear Poisson structure on the dual of $\g_x:=Ker(\rho_x)$, at the origin we have $\h_0 = 
Z\g_x$. For points $x$ at which $\pi$ vanishes (so $\g_x=T^*_xM$), we deduce: if the Poisson structure $\pi$ is linearizable\footnote{This means that,  nearby $x$, $\pi$ is isomorphic to a neighbourhood of the origin in  the linear Poisson manifold $(\g_x)^*$.} at $x$, then necessarily $\h_x=Z\g_x$. Notice that the inclusion ``$\subset$''  always holds by Prop. \ref{lem:center}.
\item Conn's linearization theorem \cite{ConnSmooth} states that  if the isotropy Lie algebra $\g_x$ is a semisimple Lie algebra of compact type, then $\pi$ is linearizable nearby $x$. Item a) is consistent with Conn's theorem, since semisimple Lie algebras have trivial center, whence $\h_x=Z\g_x=\{0\}$.  
\end{enumerate}
\end{remarks}

\subsection{Almost regular Poisson manifolds}\label{sec:almregmfd}

{We  introduce}
the objects of interest of this note, namely Poisson structures whose symplectic foliation is projective:

\begin{definition}\label{def:almostregnew}
A Poisson manifold $(M,\pi)$ is \emph{almost regular} if $\cF:=\rho(\Gamma_c(T^*M))$ is a \emph{projective}  singular foliation (see Def. \ref{def:projsfol}).  
\end{definition}

By 
Prop. \ref{prop:projE} we obtain a first characterization:
\begin{prop}\label{prop:firstc}
A Poisson manifold $(M,\pi)$ is \emph{almost regular} if $\h_x$ has constant rank,
where $\h_x$ is given by eq. \eqref{eq:nicecharPois}.
\end{prop}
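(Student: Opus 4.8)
The plan is to deduce Proposition \ref{prop:firstc} as an immediate specialization of the general criterion already established for projective singular foliations arising from Lie algebroids. Since $(M,\pi)$ is, by Definition \ref{def:almostregnew}, almost regular precisely when the singular foliation $\cF:=\rho(\Gamma_c(T^*M))$ is projective, and since $\cF$ is exactly the foliation associated to the cotangent Lie algebroid $E=T^*M$, I would invoke Proposition \ref{prop:projE} with this choice of $E$. That proposition states that $\cF$ is projective if and only if $\dim(\h_x)$ is constant in $x\in M$, where $\h_x$ is the germinal isotropy of $E$ at $x$ (Definition \ref{dfn:essisotropy}).

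The only point requiring care is to confirm that the germinal isotropy $\h_x$ appearing in Proposition \ref{prop:projE}, defined abstractly as $\ker(\widetilde{\rho}_x\colon E_x\to A^{\cF}_x)$, coincides with the explicit description given by equation \eqref{eq:nicecharPois} that is referenced in the statement. This identification is precisely the content of the characterization \eqref{eq:nicecharPois}, which was derived in \S\ref{subsec:pois} by combining Lemma \ref{lem:nicechar} i) with the skew-symmetry of the Poisson anchor $\rho$. Thus for the cotangent Lie algebroid the two notions of $\h_x$ literally agree, and no further verification is needed.

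Concretely, the argument runs as follows. First I would recall that for $(M,\pi)$ the relevant Lie algebroid is $E=T^*M$ with anchor $\rho$ given by contraction with $\pi$, so that $\cF=\rho(\Gamma_c(T^*M))$ is the associated singular foliation in the sense of \S\ref{sec:sfLA}. Next I would apply Proposition \ref{prop:projE} directly: $\cF$ is projective if and only if $\dim(\h_x)$ is constant. Finally, since almost regularity of $(M,\pi)$ means exactly that $\cF$ is projective, and since $\h_x$ is given by \eqref{eq:nicecharPois}, the equivalence is established. In short, the proof is a one-line consequence of Proposition \ref{prop:projE} applied to the cotangent Lie algebroid.

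I do not anticipate any genuine obstacle here, as the statement is a corollary rather than an independent result. The only substantive work — namely establishing the criterion that constancy of $\dim(\h_x)$ characterizes projectivity — has already been carried out in Proposition \ref{prop:projE}, and the translation of $\h_x$ into the Poisson-theoretic form \eqref{eq:nicecharPois} was completed at the start of \S\ref{subsec:pois}. The main (minor) point to emphasize in the writeup is simply that the hypotheses of Proposition \ref{prop:projE} are met by the cotangent Lie algebroid, which is immediate.
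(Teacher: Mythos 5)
Your proposal is correct and is precisely the paper's own argument: the paper states Proposition \ref{prop:firstc} as an immediate consequence of Proposition \ref{prop:projE} applied to the cotangent Lie algebroid $E=T^*M$, with the germinal isotropy $\h_x$ identified with the description in eq.~\eqref{eq:nicecharPois} as derived at the start of \S\ref{subsec:pois} from Lemma \ref{lem:nicechar} i) and the skew-symmetry of the anchor. Your reading of the statement as an equivalence (despite the ``if'' in its wording) also matches the paper's intent, since Proposition \ref{prop:projE} delivers both directions.
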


It is clear from eq. \eqref{eq:nicecharPois} that $\h$  depends only on the partition of $M$ into immersed leaves. In other words, given a manifold $M$ and Poisson bivector fields $\pi_1$ and $\pi_2$  which give the same partition of $M$ into leaves (but possibly with different symplectic forms on the leaves, and inducing different $C^{\infty}(M)$-modules of vector fields  $\cF_1$  and $\cF_2$):  $\cF_1$ is projective if{f} $\cF_2$ is projective. {Said otherwise:}

\begin{prop}
Given a Poisson manifold, the property of being  ``almost regular'' depends only on the partition of the manifold into immersed leaves.
\end{prop}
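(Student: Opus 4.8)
The plan is to reduce this statement to the characterization already at hand in eq. \eqref{eq:nicecharPois}. The final proposition asserts that almost regularity depends only on the partition of $M$ into immersed leaves, so by Def. \ref{def:almostregnew} together with Prop. \ref{prop:firstc}, it suffices to show that the integer-valued function $x\mapsto \dim(\h_x)$ depends only on this partition. The key observation is that eq. \eqref{eq:nicecharPois} expresses $\h_x$ purely in terms of the symplectic leaves:
\[
\h_x =\{\xi\in T^*_xM:\exists \text{ a local $1$-form }\tilde{\xi}\text{ with }\tilde{\xi}|_{TL}= 0 \text{ for every symplectic leaf }L\}.
\]
The crucial point is that the condition ``$\tilde\xi|_{TL}=0$ for every leaf $L$'' refers only to the tangent distribution to the leaves, i.e. to the partition, and makes no reference to the symplectic forms on the leaves nor to the bivector $\pi$ itself.

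First I would fix a manifold $M$ and two Poisson bivectors $\pi_1,\pi_2$ inducing the same partition of $M$ into immersed leaves. Write $\h_x^{(1)}$ and $\h_x^{(2)}$ for the germinal isotropies of the two cotangent Lie algebroids at $x$, and $\cF_1,\cF_2$ for the corresponding singular foliations. Since $\pi_1$ and $\pi_2$ give the same leaves, at every point $y\in M$ the tangent spaces $T_yL$ to the leaves coincide; hence the annihilator condition defining $\h_x$ in eq. \eqref{eq:nicecharPois} is literally the same condition for both Poisson structures. I would then conclude $\h_x^{(1)}=\h_x^{(2)}$ as subspaces of $T^*_xM$, for every $x\in M$, and in particular $\dim(\h_x^{(1)})=\dim(\h_x^{(2)})$ for all $x$.

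Finally I would invoke Prop. \ref{prop:firstc}: $(M,\pi_1)$ is almost regular if{f} $x\mapsto\dim(\h_x^{(1)})$ is constant, and similarly for $\pi_2$. Since the two functions agree pointwise, one is constant if{f} the other is, so $\cF_1$ is projective if{f} $\cF_2$ is, which is exactly the assertion. I do not expect a genuine obstacle here, as the statement is essentially a reformulation of the remark immediately preceding it; the only point requiring care is to confirm that eq. \eqref{eq:nicecharPois} really is manifestly leaf-dependent and symplectic-form-independent — that the only ingredient entering the definition of $\h_x$ is the distribution $TL$ tangent to the partition, and that the passage through Lemma \ref{lem:nicechar}~i) used to derive eq. \eqref{eq:nicecharPois} does not secretly reintroduce a dependence on $\pi$. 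Once that is checked, the argument is immediate.
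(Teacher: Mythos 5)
Your proof is correct and follows essentially the same route as the paper, whose own justification is the remark immediately preceding the proposition: eq.~\eqref{eq:nicecharPois} characterizes $\h_x$ purely through the tangent spaces $T_yL$ of the leaves, so two Poisson bivectors with the same partition yield the same germinal isotropies, and Prop.~\ref{prop:firstc} (constancy of $\dim \h_x$) then gives the equivalence. Your caveat about the derivation of eq.~\eqref{eq:nicecharPois} is also resolved as you hope: the passage from Lemma~\ref{lem:nicechar}~i) uses only the skew-symmetry identity $\ker(\rho_x)=(\mathrm{Im}(\rho_x))^{\circ}$, and the image of $\rho_y$ is exactly $T_yL$, so no dependence on the symplectic forms or on $\pi$ beyond the partition enters.
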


The above  is somewhat surprising, and it contrasts with the situation for general Lie algebroids described in Rem. \ref{rem:lafol}. 

We can make Prop. \ref{prop:firstc} more explicit:

\begin{thm}\label{thm:aregpois}Let  $(M,\pi)$ be a Poisson manifold, and denote\footnote{We denote this subset by $M_{\mathrm{reg}}$  since, for almost regular Poisson manifolds, it agrees with the set of regular points 
as defined before Rem. \ref{rem:dense}.}
 by $M_{\mathrm{reg}}$ the open subset of $M$ on which $\pi$ has maximal rank. 
$(M,\pi)$ is  
 {almost regular}   if{f}
  \begin{itemize}
\item[i)] $M_{\mathrm{reg}}$ is dense in $M$
\item[ii)] there is a
 distribution $D$ on $M$ such that  for all $x\in M_{\mathrm{reg}}$ one has $D_x=T_xL$, where $L$ denotes the symplectic leaf through $x$.
\end{itemize}
\end{thm}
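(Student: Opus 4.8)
The plan is to identify the germinal isotropy $\h:=\bigcup_{x}\h_x$ with the annihilator of a suitable distribution, and to reduce almost regularity to the constancy of $\dim\h_x$ via Prop.~\ref{prop:firstc} (the Poisson specialization of the equivalence in Prop.~\ref{prop:projE}). The bridge between the two sides of the stated equivalence is the pointwise identity $\h_x=D_x^{\circ}$: once the candidate distribution $D$ is in place, both directions amount to propagating an equality that holds on the dense set $M_{\mathrm{reg}}$ to all of $M$ by continuity.

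For the direction ``$\Rightarrow$'', assume $\cF$ is projective. First (i): by Rem.~\ref{rem:dense} the regular points of $\cF$ form an open dense subset, and they coincide with the points where the leaf has maximal dimension, i.e.\ where $\pi$ has maximal rank; this is exactly $M_{\mathrm{reg}}$, so $M_{\mathrm{reg}}$ is dense. To produce $D$ I would argue that, by Prop.~\ref{prop:firstc}, $\dim\h_x$ is constant, and since $\h$ is the kernel of the constant-rank surjective anchor $\widetilde{\rho}\colon T^*M\to A^{\cF}$ of Remark~\ref{rem:alminj} (surjectivity from \eqref{eqn:essisotropy}), it is a smooth subbundle of $T^*M$. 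Set $D:=\h^{\circ}\subseteq TM$, a constant rank distribution. On $M_{\mathrm{reg}}$ every leaf is regular, so Prop.~\ref{prop:essisotrsing} gives $\h_x=\g_x=\ker(\rho_x)$, and skew-symmetry of $\rho$ gives $\ker(\rho_x)=(\mathrm{Im}(\rho_x))^{\circ}$; hence $D_x=\h_x^{\circ}=\mathrm{Im}(\rho_x)=T_xL$, which is (ii).

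For the direction ``$\Leftarrow$'', assume (i) and (ii) with $D$ of constant rank $r$. The key preliminary step is to show that $\rho$ takes values in $D$ everywhere, i.e.\ $\mathrm{Im}(\rho_x)=T_xL\subseteq D_x$ for all $x$ and not merely on $M_{\mathrm{reg}}$. For this I would fix a $1$-form $\alpha$: the vector field $\rho(\alpha)$ lies in $D$ over the dense set $M_{\mathrm{reg}}$ (where $D_y=\mathrm{Im}(\rho_y)$), and since $D$ is a subbundle ``lying in $D$'' is a closed pointwise condition, so $\rho(\alpha)_x\in D_x$ for all $x$ by continuity. With this in hand I prove $\h_x=D_x^{\circ}$ for every $x$. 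For $\h_x\subseteq D_x^{\circ}$: if $\xi\in\h_x$ has a local extension $\widetilde{\xi}$ vanishing on all symplectic leaves, then $\widetilde{\xi}|_{D}=0$ on $M_{\mathrm{reg}}$ (where $D_y=T_yL$), and since $\widetilde{\xi}|_{D}$ is a continuous section of $D^*$ vanishing on a dense set it vanishes at $x$ too. For $D_x^{\circ}\subseteq\h_x$: extend $\xi\in D_x^{\circ}$ to a local section $\widetilde{\xi}$ of the subbundle $D^{\circ}$, so that $\widetilde{\xi}|_{D}\equiv 0$; since $T_yL\subseteq D_y$ for all $y$ by the preliminary step, $\widetilde{\xi}$ vanishes on all leaves, whence $\xi\in\h_x$ by \eqref{eq:nicecharPois}. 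Thus $\h$ has constant rank $\dim M-r$, and $(M,\pi)$ is almost regular by Prop.~\ref{prop:firstc}.

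The main obstacle, in both directions, is the passage across the singular locus: the defining properties of $D$ and of $\h$ are directly comparable only on $M_{\mathrm{reg}}$, and everything hinges on propagating these identities to the singular points. This is precisely where density of $M_{\mathrm{reg}}$ (hypothesis (i), resp.\ Rem.~\ref{rem:dense}) enters, together with the closedness of the condition ``a section lies in a given subbundle''. I would also take care to record that $\h=\ker\widetilde{\rho}$ is a genuine smooth subbundle, not merely of constant fiber dimension, since that is exactly what makes $D=\h^{\circ}$ a bona fide constant rank distribution.
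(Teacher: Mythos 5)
Your proof is correct and takes essentially the same route as the paper's: in both directions you identify $\h$ with $D^{\circ}$ by propagating the identity $\h_x^{\circ}=T_xL$ from the dense set $M_{\mathrm{reg}}$, and you conclude via the constancy of $\dim\h_x$ (Prop.~\ref{prop:firstc}), exactly as the paper does. The only difference is that you spell out the continuity details the paper leaves implicit -- notably the preliminary step that $T_xL\subseteq D_x$ at singular points and the smoothness of $\h=\ker\widetilde{\rho}$ as a subbundle -- which is a faithful filling-in rather than a different argument.
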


\begin{remark} Given an almost regular Poisson manifold $(M,\pi)$:
\begin{enumerate}
\item the  distribution $D$ as above is unique.  Indeed, on the open dense subset $M_{\mathrm{reg}}$  the  distribution $D$ is clearly determined by $\pi$, and if two distributions agree on an open dense subset then they agree everywhere.  
 
\item $D$ is involutive. Indeed by ii)  the distribution $D$ is involutive on $M_{\mathrm{reg}}$, which is an open dense subset, and therefore {it is involutive} on the whole of $M$. 
\item the involutive distribution $D$ integrates to a {regular} foliation   by Poisson submanifolds. This is clear at points of $M_{\mathrm{reg}}$, and it is true on the whole of $M$ by a continuity argument.
\end{enumerate}
\end{remark}

\begin{proof}[Proof of Thm. \ref{thm:aregpois}] 
If   $x\in M_{\mathrm{reg}}$ then the subspaces $\ker(\rho_y)$
have constant dimension for all points $y$ 
nearby $x$,  so that  Lemma \ref{lem:nicechar} i) implies
$\h_x= \ker(\rho_x)$. Taking annihilators we see that 
$\h^{\circ}_x=T_xL$ for each $x\in M_{\mathrm{reg}}$, where $L$ is the symplectic leaf through $x$.

``$\Rightarrow$'' Assume that $\cF$ is projective. Condition i) is satisfied by Rem. \ref{rem:dense}. Further, {by Prop. \ref{prop:firstc},}
$\h$ is a vector subbundle of $T^*M$, so its annihilator $$D:=\h^{\circ}$$ is a distribution on $M$. As seen above, at regular points $x\in M_{\mathrm{reg}}$ we have $\h^{\circ}_x=T_xL$, where $L$ is the symplectic leaf through $x$. So condition ii) is satisfied.

  ``$\Leftarrow$'' Let $y\in M$.
  A covector $\xi\in T_y^*M$ can be extended locally to a 1-form annihilating the symplectic leaves of $(M,\pi)$ if{f} $\xi\in D_y^{\circ}$. This follows from the fact that 
  for all $x\in M_{\mathrm{reg}}$ we have $D_x=T_xL$, where $L$ denotes the symplectic leaf through $x$, and that $M_{\mathrm{reg}}$ is dense in $M$.
By eq. \eqref{eq:nicecharPois} this means that  $\h_y=D_y^{\circ}$, i.e. $\h=D^{\circ}$. $D$ being a distribution implies that $\h$ has constant rank, so we can apply Prop. 
\ref{prop:firstc}.
 \end{proof}

\begin{remark}\label{rem:Dstar}
By Rem. \ref{rem:alminj}, an almost injective Lie algebroid that gives rise to $\cF$ is 
$$A^{\cF}\cong T^*M/D^{\circ}=D^*,$$ i.e. the cotangent bundle to the leaves of the regular foliation.
\end{remark}

We characterize almost regular Poisson structures directly in terms of the bivector field $\pi$.
\begin{prop}\label{prop:aregtensor}
Let $(M,\pi)$ be a Poisson manifold, denote by $k$ the minimal integer such that $\wedge^k \pi$ is not identically zero and $\wedge^{k+1} \pi\equiv 0$.
 Denote $M_{\mathrm{reg}}:=\{x\in M: (\wedge^k \pi)_x\neq 0\}$. 
 
 $(M,\pi)$ is  
 {almost regular}   if{f}
  \begin{itemize}
\item[i)] $M_{\mathrm{reg}}$ is dense in $M$
\item[ii)] there is a rank 1 vector subbundle $K \subset \wedge^{2k}TM$ 
 such that $\wedge^k \pi\in \Gamma(K)$.
\end{itemize} 
\end{prop}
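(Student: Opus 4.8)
The plan is to deduce the statement from the already-established characterization in Theorem \ref{thm:aregpois}, by translating its geometric conditions on the symplectic leaves into the stated algebraic conditions on $\wedge^k\pi$. The first observation is that the two occurrences of $M_{\mathrm{reg}}$ agree. Indeed, the rank of the bivector $\pi_x$ equals $\dim\mathrm{Im}(\rho_x)=\dim T_xL$, which is even, say $2r_x$, and $(\wedge^j\pi)_x\neq 0$ precisely when $j\le r_x$. Hence the integer $k$ of the statement is the maximal half-rank $\max_x r_x$, and $\{x:(\wedge^k\pi)_x\neq 0\}$ is exactly the locus where $\pi$ has maximal rank. So condition i) here is literally condition i) of Theorem \ref{thm:aregpois}, and it remains only to show that, assuming i), condition ii) here is equivalent to condition ii) there.

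The bridge is a pointwise fact from linear algebra. If $\pi_x$ has maximal rank $2k$, a Darboux-type normal form gives $\pi_x=\sum_{i=1}^k e_{2i-1}\wedge e_{2i}$ in a suitable basis, whence $(\wedge^k\pi)_x=k!\,e_1\wedge\cdots\wedge e_{2k}$ is a nonzero \emph{decomposable} $2k$-vector, and the $2k$-plane it spans is exactly $\mathrm{Im}(\rho_x)=T_xL$. Conversely, any $2k$-dimensional subspace $W\subset T_xM$ determines the line $\wedge^{2k}W\subset\wedge^{2k}T_xM$, and $W\mapsto\wedge^{2k}W$ is the Pl\"ucker embedding, which identifies the Grassmannian $\mathrm{Gr}(2k,T_xM)$ with the set of decomposable lines; crucially this image is a \emph{closed} subset of $\PP(\wedge^{2k}T_xM)$. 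I will also use that a nonzero decomposable $\omega\in\wedge^{2k}T_xM$ recovers its $2k$-plane as $W_\omega=\{v\in T_xM:v\wedge\omega=0\}$.

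For the direction ``Theorem \ref{thm:aregpois} ii) $\Rightarrow$ ii) here'', given the distribution $D$ (of rank $2k$, since $D_x=T_xL$ on the dense set $M_{\mathrm{reg}}$) I set $K:=\wedge^{2k}D$, a rank-one subbundle of $\wedge^{2k}TM$. Then $\wedge^k\pi\in\Gamma(K)$: at $x\in M_{\mathrm{reg}}$ the pointwise fact gives $(\wedge^k\pi)_x\in\wedge^{2k}T_xL=\wedge^{2k}D_x=K_x$, while at the remaining points $(\wedge^k\pi)_x=0\in K_x$. For the converse ``ii) here $\Rightarrow$ Theorem \ref{thm:aregpois} ii)'', I start from the line subbundle $K$ with $\wedge^k\pi\in\Gamma(K)$. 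On $M_{\mathrm{reg}}$ the fibre $K_x$ is spanned by the decomposable vector $(\wedge^k\pi)_x$; since $M_{\mathrm{reg}}$ is dense and the set of decomposable lines is closed in $\PP(\wedge^{2k}T_xM)$, every fibre $K_x$ is a decomposable line. Defining $D_x:=W_{\omega_x}$ for $\omega_x$ a local nonvanishing section of $K$ then yields a distribution that coincides with $T_xL$ on $M_{\mathrm{reg}}$.

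The step I expect to require the most care is the smoothness of this $D$. Writing $\omega$ for a local frame of $K$, the assignment $v\mapsto v\wedge\omega_x$ is a smooth family of linear maps $T_xM\to\wedge^{2k+1}T_xM$ whose kernel is $D_x$; because each $\omega_x$ is decomposable with span of dimension $2k$, this kernel has constant dimension $2k$, so the family has constant rank and its kernel $D$ is a genuine smooth subbundle. This constant-rank argument, together with the closedness of the Grassmannian used to extend decomposability across the singular locus, is the technical heart of the equivalence; once it is in place, the two implications above close the proof via Theorem \ref{thm:aregpois}.
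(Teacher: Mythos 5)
Your proposal is correct and follows essentially the same route as the paper's proof: both reduce to Theorem \ref{thm:aregpois}, identify the two sets $M_{\mathrm{reg}}$ via the equivalence $(\wedge^k\pi)_x\neq 0 \Leftrightarrow \mathrm{rank}\,\pi_x = 2k$, take $K:=\wedge^{2k}D$ in one direction, and in the other use decomposability of $(\wedge^k\pi)_x$ on the dense set $M_{\mathrm{reg}}$ together with closedness of the Pl\"ucker image to get decomposability of every fibre $K_x$ and hence a rank-$2k$ distribution $D$ with $\wedge^{2k}D=K$ agreeing with $T_xL$ on $M_{\mathrm{reg}}$. Your only deviations are improvements in detail: you verify smoothness of $D$ explicitly via the constant-rank family $v\mapsto v\wedge\omega_x$ (a step the paper asserts without proof), and in the forward direction you use the vanishing of $\wedge^k\pi$ off $M_{\mathrm{reg}}$ instead of invoking that the leaves of $D$ are Poisson submanifolds.
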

{Notice that the subbundle $K$ above, when it exists,   is unique.}
\begin{proof}
We make use of the characterization of  almost regular Poisson structures obtained in
Thm. \ref{thm:aregpois}. 
Clearly, at any point $x$ of $M$, $\pi$ has maximal rank (namely $2k$) if{f} $(\wedge^k \pi)_x\neq 0$.
Hence we only need to show that the existence of a subbundle $K$ as in $ii)$ is equivalent to the existence of a distribution $D$ as in Thm. \ref{thm:aregpois} $ii)$.

Given $D$, define $K:=\wedge^{2k}D$. It is a line bundle, and since the leaves of $D$ are Poisson submanifolds, we can view $\pi$ as a section of $\wedge^{2} D$, so $\wedge^k \pi\in \Gamma(K)$. Conversely, given $K$, notice that at points $x\in M_{\mathrm{reg}}$ we have $\RR(\wedge^k \pi)_x=K_x$. Hence $K_x$ is spanned by a decomposable element of $\wedge^{2k}T_xM$ {(namely, the wedge product of the elements of a basis of $T_xL$,  where $L$ denotes the symplectic leaf through $x$)}. The same holds for any arbitrary point of $M$, since being decomposable is a closed\footnote{This can be seen choosing coordinates to identify the tangents spaces of $M$ with $\RR^n$, and using the fact that the decomposable elements of the projective space $\PP(\wedge^{2k}\RR^n)$ form a closed subset because they are exactly the elements in the image of the Pl\"ucker embedding
 $$\mathrm{Grassmann}(2k, \RR^n)\to \PP(\wedge^{2k}\RR^n), W\mapsto [w_1\wedge\dots\wedge w_{2k}],$$
where $\{w_1,\dots,w_{2k}\}$ is any basis of the subspace $W\subset \RR^n$.}
 condition and $M_{\mathrm{reg}}$ is dense in $M$. Hence there exists a rank $2k$ distribution $D$ such that $\wedge^{2k}D=K$. At points $x\in M_{\mathrm{reg}}$ we have
$\RR(\wedge^k \pi)_x=K_x=(\wedge^{2k} D)_x$, implying that  $T_xL=D_x$, 
where $L$ denotes the symplectic leaf through $x$. 
\end{proof}

\section{Examples of almost regular Poisson structures}\label{sec:examples}
     
In \S \ref{subsec:extreme}-\S \ref{subsubsec:PD} we present several classes of examples of almost regular Poisson manifolds, mostly making use of Thm. \ref{thm:aregpois}. 
In \S \ref{subsec:lin} we will see that linear Poisson manifolds are seldom almost regular, and in \S \ref{subsec:actions} we present a natural construction involving Lie algebra actions  which however allows us to recover only regular Poisson structures.
 
\subsection{Two extreme cases of almost regular Poisson manifolds}\label{subsec:extreme}
There are two extreme classes of almost regular Poisson manifolds $(M,\pi)$, as can be seen from Thm. \ref{thm:aregpois}:
\begin{ex}\label{ex:simple}
\begin{enumerate}
\item The case $M_{\mathrm{reg}}=M$: one obtains exactly the regular Poisson manifolds, i.e. $\pi$ has constant rank.  In this case, the regular foliation integrating $D$ is the foliation  by symplectic leaves of $(M,\pi)$.

\item The case $D=TM$: one obtains exactly bivector fields $\pi$ which have full rank on a dense subset of $M$. A special case is given by log symplectic structures, also known as $b$-symplectic structures \cite{OriginalBman}, which we recall in \S \ref{subsubsec:PD}. Many examples and constructions for them are given in \cite{ExamGil} and \cite{SymplTopb}, both on orientable and non-orientable manifolds. A simple example  is the bivector field $\pi = y \partial_x\wedge \partial_y$ on $\R^2$; it has full rank everywhere except on the $x$-axis. Another special case is given by scattering symplectic structures \cite{LaniusScattering}.
\end{enumerate}
\end{ex}
  
\subsection{Multiplying by Casimir functions}  
  
The following is probably the simplest class of almost regular Poisson manifolds that does not 
belong to the above two extreme classes. It includes $(\RR^3,t\partial_{x}\wedge \partial_{y})$, the dual of the Heisenberg Lie algebra.
    
\begin{ex}\label{ex:simple2}
Let $N$ be a symplectic manifold, denote by $\pi_N$ the (full-rank) Poisson bivector on $N$ obtained by inverting the symplectic form. Then\footnote{Explicitly, the Poisson tensor is given by  $\pi(x)=t\cdot (\pi_N)(q)$ for all $x=(q,t)\in N\times \RR$.}
 $$(N\times \RR, \pi:=t\cdot\pi_N)$$
 is an almost regular Poisson structure, where $t$ denotes the coordinate on $\RR$. (It is a particular kind of Heisenberg-Poisson manifold.)
This it clear by Thm. \ref{thm:aregpois}. It can also be seen using Prop. \ref{prop:firstc}, for  eq. \eqref{eq:nicecharPois} makes it clear that for all $x\in N\times \RR$ we have that $\h_x$ is one-dimensional (it is spanned by $dt$). 
\end{ex}  
  
In general, one can use Casimir functions to produce new almost regular Poisson structures out of old ones. The proof of the following lemma is straightforward and is omitted.
\begin{lemma}
Let $(M,\pi)$ be an almost regular Poisson manifold. As earlier, we denote by $D$ the associated distribution.
\begin{enumerate}
\item A function $f\in C^{\infty}(M)$ is a Casimir function for $\pi$ if{f} $f$ is constant along the leaves of $D$.

\item Let $f$ be a Casimir function for $\pi$ with full support, i.e. $\{p\in M: f(p)\neq 0\}$ is dense in $M$. Then $(M,f\cdot \pi)$ is again an almost regular Poisson manifold. Its associated distribution is again $D$. 
\end{enumerate}
\end{lemma}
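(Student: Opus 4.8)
The plan is to reduce both statements to the characterization of almost regularity furnished by Theorem \ref{thm:aregpois}, rather than to compute with the module $\cF$ directly.

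For item (1), recall that $f$ is a Casimir function precisely when the Hamiltonian vector field $\sharp(df)$ vanishes identically, i.e. $df\in\ker(\sharp)$ at every point, while $f$ is constant along the leaves of $D$ precisely when $df\in D^{\circ}$ at every point. I would invoke two facts established earlier: first, the skew-symmetry of $\sharp$ gives $\ker(\sharp_x)=(\mathrm{Im}(\sharp_x))^{\circ}$ at every $x$ (the discussion preceding eq. \eqref{eq:nicecharPois}, cf. Lemma \ref{lem:nicechar}); second, the proof of Theorem \ref{thm:aregpois} identifies $\h=D^{\circ}$. For the direction ``$\Leftarrow$'', if $df\in D^{\circ}$ then, since $\mathrm{Im}(\sharp_x)=T_xL\subseteq D_x$ for every symplectic leaf $L$, skew-symmetry gives $\langle\sharp(df),\eta\rangle=-\langle df,\sharp(\eta)\rangle=0$ for all $\eta$, so $\sharp(df)=0$ and $f$ is Casimir. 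For ``$\Rightarrow$'', if $f$ is Casimir then on the dense open set $M_{\mathrm{reg}}$ (where $\mathrm{Im}(\sharp_x)=D_x$) we have $df\in\ker(\sharp)=(\mathrm{Im}(\sharp))^{\circ}=D^{\circ}$; since $D^{\circ}$ is a closed subbundle and $df$ is a continuous section, $df\in D^{\circ}$ everywhere, and connectedness of the $D$-leaves then gives that $f$ is constant along them.

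For item (2), I would first record that $f\pi$ is again a Poisson bivector: a standard computation shows the Schouten bracket $[f\pi,f\pi]$ to be a combination of $f^{2}[\pi,\pi]$ and $f\,\bigl(\sharp(df)\wedge\pi\bigr)$, both of which vanish because $\pi$ is Poisson and $\sharp(df)=0$. Then I verify the two conditions of Theorem \ref{thm:aregpois} for $(M,f\pi)$, using the same distribution $D$. Since $\sharp_{f\pi}=f\,\sharp_{\pi}$, the bivector $f\pi$ attains maximal rank exactly on $\{f\neq 0\}\cap M_{\mathrm{reg}}$; this is the intersection of two dense open subsets (here the full-support hypothesis on $f$ is used), hence dense, giving condition i). For condition ii), at a point $x$ with $f_x\neq 0$ one has $\mathrm{Im}(\sharp_{f\pi,x})=\mathrm{Im}(\sharp_{\pi,x})=D_x$, so $D$ is precisely the tangent distribution to the symplectic leaves of $f\pi$ over its regular set. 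Theorem \ref{thm:aregpois} then yields that $(M,f\pi)$ is almost regular, and the uniqueness of the associated distribution noted after that theorem identifies it with $D$.

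The genuinely delicate points here are bookkeeping rather than conceptual. One must keep in mind that the zero locus $\{f=0\}$ produces new, lower-dimensional leaves of $f\pi$, so that the leaf partitions of $\pi$ and $f\pi$ differ in general and the shortcut ``almost regularity depends only on the leaf partition'' does \emph{not} apply directly; this is exactly why I route the argument through the rank condition of Theorem \ref{thm:aregpois}. The full-support hypothesis enters precisely to keep the regular set of $f\pi$ dense, which is what condition i) demands; without it that condition could fail.
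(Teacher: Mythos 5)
Your proof is correct. The paper omits the proof of this lemma as ``straightforward,'' and your route through Theorem \ref{thm:aregpois} (density of the regular set of $f\pi$, identification of its leaf tangents with $D$ there, plus the uniqueness of the associated distribution) is evidently the intended verification; the one ingredient worth citing explicitly is the inclusion $T_xL\subseteq D_x$ at singular points used in item (1), which follows from the remark after Theorem \ref{thm:aregpois} that the leaves of $D$ are Poisson submanifolds (equivalently, from $\h=D^{\circ}\subseteq\ker\sharp_x$ and skew-symmetry), and your correct observation that the leaf partitions of $\pi$ and $f\pi$ differ on $\{f=0\}$ is precisely why the rank-based characterization, rather than the partition-invariance proposition, is the right tool here.
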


Ex. \ref{ex:simple2} can be recovered from the above lemma as follows:   
$(N\times \RR, \pi_N)$ is a regular Poisson manifold and $t\in C^{\infty}(N\times \RR)$ is a Casimir function with full support.
 
\subsection{Products and suspensions}

The class of almost regular Poisson structures is closed with respect to cartesian products, as can be seen easily from Thm. \ref{thm:aregpois}. More generally, one can perform a straightforward extension of a classical construction
from foliation theory, known as \emph{suspension}, as follows.
\begin{ex}
Let $(B,\pi_B)$ and $(N,\pi_N)$ be almost regular Poisson manifolds, and let $h$ be an action of the fundamental group $\pi_1(B,b)$ on $(N,\pi_N)$  by Poisson diffeomorphisms (here $b\in B$ is fixed). The Poisson structure $\pi_B$ can be lifted to the universal cover $\tilde{B}$ by the covering map, hence
 $\tilde{B}\times N$ is an almost regular Poisson manifold.
The discrete group $\pi_1(B,b)$ acts diagonally on it by Poisson diffeomorphisms and freely.
The quotient
$$(\tilde{B}\times N)/\pi_1(B,b)$$ is therefore an almost regular Poisson manifold, and comes with a Poisson map onto $B$.

For instance, let $\phi$ be a Poisson diffeomorphism of an almost regular Poisson manifold  $(N,\pi_N)$. $\phi$ generates an action of $\ZZ$ on $(N,\pi_N)$, and choosing $B=S^1$ we obtain an almost regular Poisson structure on the ``mapping torus'' $(\RR\times N)/\ZZ$, i.e. on  $([0,1]\times N)/((0,x)\sim (1,\phi(x))$. 
\end{ex}

\subsection{{\fls}  manifolds}\label{sec:fls}

Recall that a \emph{log symplectic} structure on a manifold $M^{2n}$ is a Poisson structure $\pi$ such that $\wedge^n\pi$ is a section of the line bundle $\wedge^{2n}TM$ that is transverse to the zero section \cite{OriginalBman}.  The zeros of this section then form a smooth hypersurface $Z$ (not necessarily connected), called the \emph{exceptional hypersurface}.

By analogy to this, Prop. \ref{prop:aregtensor} suggests to define:
\begin{definition}\label{def:folls}
Let $(M,\pi)$ be a   Poisson manifold {with $\pi\notequiv 0$}, denote by $k$ the unique integer such that $\wedge^k \pi$ is not identically zero and $\wedge^{k+1} \pi\equiv 0$.
We say that $(M,\pi)$ is  \emph{\fls} if there is a rank 1 vector subbundle $K \subset \wedge^{2k}TM$ 
 such that  $\wedge^k \pi$ {is a section of $K$ which is} transverse to the zero section.
\end{definition}
  
\begin{ex}
When the dimension of $M$ is even and $2k=dim(M)$, it follows from the definition that $\pi$ is a  log symplectic structure. This justifies in part the name ``{\fls}'', see Prop. \ref{prop:flogdense} below for a further justification.
\end{ex}
  
  {Let $(M,\pi)$ be a {\fls} manifold.}
  To simplify the notation let us denote $s:=\wedge^k \pi$ and $\mathrm{gr}(s):=\{s(x):x\in M\}\subset K$.
By definition, the transversality condition for $s$ at a point $x\in \mathrm{gr}(s)\cap M$ is $T_x\mathrm{gr}(s)+T_xM=T_xK$, which  is equivalent to $T_x\mathrm{gr}(s)\neq T_xM$ since $K$ has rank 1.
Notice that $$Z:=\mathrm{gr}(s)\cap M$$
is a codimension 1 submanifold (when it is not empty),
so in particular its complement is dense in $M$. By  Prop. \ref{prop:aregtensor} we conclude that $(M,\pi)$ is an almost regular Poisson structure. 

{In the rest of this subsection we address the following natural question.}
 Denote by $D$ the associated distribution as in Thm. \ref{thm:aregpois}. It satisfies
 $\wedge^{2k}D=K$, and  its leaves are Poisson submanifolds of $(M,\pi)$ of dimension $2k$. It is natural to wonder whether, for each leaf $P$ of $D$, the pair $(P,\pi_P)$ is a log symplectic manifold, where $\pi_P$ denotes the restriction of $\pi$ to $P$.
 The answer is ``no'', but we obtain a  positive partial result in Prop. \ref{prop:flogdense}.

\begin{lemma}\label{lem:notls}
Let $x\in Z\cap P$. Then $\wedge^k \pi_P$ is transverse to the zero section of $\wedge^{2k}TP$ at $x$ if{f} $D_x\not\subset T_xZ$.
\end{lemma}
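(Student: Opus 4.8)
The plan is to reduce the statement to a short computation with the intrinsic (vertical) derivative of the section $s:=\wedge^k\pi$ at a zero, after first identifying the relevant line bundles. Since $P$ is a leaf of the distribution $D$, we have $T_xP=D_x$ for every $x\in P$, and hence $K|_P$ coincides with $\wedge^{2k}TP$, because $\wedge^{2k}D=K$ by Thm.~\ref{thm:aregpois}. As $P$ is a Poisson submanifold, $\pi$ is tangent to $P$ and $\wedge^k\pi_P=(\wedge^k\pi)|_P=s|_P$. Thus $\wedge^k\pi_P$ is nothing but the restriction to $P$ of the global section $s$, now viewed as a section of the line bundle $K|_P=\wedge^{2k}TP$, and the whole statement becomes a comparison of the transversality of $s$ (over $M$) with that of $s|_P$ (over $P$) at the common zero $x\in Z\cap P$.

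The key tool is the intrinsic derivative. Since $s(x)=0$, there is a well-defined linear map $d^v_xs\colon T_xM\to K_x$, independent of any choice of connection or local trivialization precisely because $s$ vanishes at $x$. By hypothesis $s$ is transverse to the zero section of $K$ along $Z$, which for the rank-one bundle $K$ means exactly that $d^v_xs\neq 0$; moreover the zero hypersurface then satisfies $T_xZ=\ker(d^v_xs)$, a hyperplane in $T_xM$.

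Next I would note that forming the intrinsic derivative commutes with restriction to a submanifold through the zero: differentiating $s$ along curves contained in $P$ shows $d^v_x(s|_P)=d^v_xs\circ\iota$, where $\iota\colon T_xP\hookrightarrow T_xM$ is the inclusion. Since $K|_P$ is again a line bundle, $s|_P=\wedge^k\pi_P$ is transverse to its zero section at $x$ precisely when $d^v_x(s|_P)\neq 0$, that is, when $d^v_xs$ does not vanish identically on $T_xP$. Because $T_xP=D_x$ and $\ker(d^v_xs)=T_xZ$, this is equivalent to $D_x\not\subset T_xZ$, which is exactly the claim.

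The computations here are all routine; the only point requiring a little care is the well-definedness of the intrinsic derivative and its compatibility with restriction, i.e.\ that $d^v_x(s|_P)$ is genuinely the restriction of $d^v_xs$ to $T_xP$. I expect this to be the main (and only minor) obstacle, and it is handled by working in a local trivialization of $K$ near $x$ in which $s$ is represented by a function vanishing at $x$: restricting that function to $P$ and differentiating yields the desired identity, while the vanishing of $s$ at $x$ guarantees independence of the chosen trivialization.
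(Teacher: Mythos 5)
Your proof is correct and takes essentially the same route as the paper: both arguments reduce the statement to the rank-one characterization of transversality at a zero of the section $s=\wedge^k\pi$ (after identifying $\wedge^k\pi_P$ with $s|_P$ as a section of $K|_P=\wedge^{2k}TP$) and then compare $T_xP=D_x$ with $T_xZ$. The only difference is cosmetic: the paper encodes transversality via graph tangent spaces ($T_x\mathrm{gr}(s)\neq T_xM$, and likewise for $s|_P$), whereas you encode it via the intrinsic derivative $d^v_xs$ with $\ker(d^v_xs)=T_xZ$; these are the same reduction in different language.
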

\begin{proof}
{Using $T_xZ=T_x\mathrm{gr}(s)\cap T_xM$ we have}
$$D_x\subset T_xZ \Leftrightarrow D_x\subset T_x\mathrm{gr}(s|_P)\Leftrightarrow D_x= T_x\mathrm{gr}(s|_P).$$ On the other hand
the transversality condition for $\wedge^k (\pi_P)$ at $x$ is    is equivalent to 
$T_x\mathrm{gr}(\wedge^k (\pi_P))\neq T_xP$, i.e. to $T_x\mathrm{gr}(s|_P)\neq D_x$.
\end{proof}
 
Let us denote 
$$Z_{\mathrm{sing}}:=\{x\in Z:  D_x\subset T_xZ\}.$$
By Lemma \ref{lem:notls} this is the set of points $x$ of $Z$ where the leaf of $D$ through $x$  ``fails to 
be log symplectic at $x$''. Hence a leaf of $D$ is log symplectic
if{f} it does not intersect $Z_{\mathrm{sing}}$.
 
We present simple examples of {\fls} manifolds. More example are given
in the next subsection (see the text following Prop. \ref{prop:log} and Remark \ref{rem:flog}).

\begin{ex}\label{ex:r3}
Let $f\colon \RR^3\to \RR$ be a function such that $0$ is a regular value. 
 Consider the Poisson manifold
$(\RR^3,f\partial_{x_1}\wedge\partial_{x_2})$. Then
$\mathrm{gr}(\pi)$ is a section of the line bundle $K=\RR(\partial_{x_1}\wedge\partial_{x_2})$, and it is transverse to the zero section since $0$ is a regular value of $f$. Hence we have a  {\fls} manifold. Notice that by dimension reasons 
$Z_{\mathrm{sing}}=\{x\in f^{-1}(0): D_x= T_xZ \}$.
A consequence of this is that  any smooth curve with image in $Z_{\mathrm{sing}}$ lies in a single leaf of $D$.

Using $D_x=\mathrm{Span}\{\partial_{x_1},\partial_{x_2}\}$,
we see that: 
\begin{enumerate}
\item For $f=x_1$ we have  $Z=\{x_1=0\}$ and 
$T_xZ \neq D_x$ at all $x\in Z$, so $Z_{\mathrm{sing}}$ is empty. 
\item For $f=x_3$ we have  $Z=\{x_3=0\}$ and 
$T_xZ=D_x$ at all $x\in Z$, so $Z_{\mathrm{sing}}=Z$. 
(This is a special case of Ex. \ref{ex:simple2}).  
\item For $f=x_3-x_1^2$ we have $Z=\{x_3-x_1^2=0\}$ and $Z_{\mathrm{sing}}=\{x_1=0\}\cap Z$ is a line inside $Z$.
\end{enumerate}
\end{ex}

The above example is special also in that the rank of $D$ is exactly the dimension of $Z$. In general
the following proposition holds, stating that ``almost all'' leaves of $D$ are log symplectic manifolds, therefore justifying the name we gave in Def. \ref{def:folls}.
\begin{prop}\label{prop:flogdense}
Let $(M,\pi)$ be a {\fls} manifold.  Denote
 by $D$ the corresponding involutive distribution.
Assume that $M/D$, the quotient of $M$ by the foliation integrating $D$, has a\footnote{There is at most one smooth structure with this property.} smooth manifold structure so that the projection $\mathrm{pr}\colon M\to  M/D$ is a submersion. 

Then there is a measure zero  set $X\subset M/D$ such that for all $c\in (M/D)-X$,  the leaf $\mathrm{pr}^{-1}(c)$  is a log symplectic manifold.
\end{prop}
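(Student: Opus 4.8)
The plan is to identify explicitly the exceptional set $X\subset M/D$ and to realize it as a set of critical values, so that Sard's theorem applies. First I would reduce the statement to a claim about $Z_{\mathrm{sing}}$. Fix $c\in M/D$ and set $P:=\mathrm{pr}^{-1}(c)$, a leaf of $D$ of dimension $2k$, which is a Poisson submanifold of $(M,\pi)$. Since $\pi$ has full rank $2k$ precisely on $M\setminus Z$, the zero set of the top wedge $\wedge^k\pi_P\in\Gamma(\wedge^{2k}TP)$ is exactly $P\cap Z$. Hence $P$ is log symplectic if and only if $\wedge^k\pi_P$ is transverse to the zero section at every point of $P\cap Z$; by Lemma \ref{lem:notls} this transversality holds at $x\in P\cap Z$ exactly when $D_x\not\subset T_xZ$, i.e. when $x\notin Z_{\mathrm{sing}}$. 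Therefore $P$ is log symplectic if and only if $P\cap Z_{\mathrm{sing}}=\emptyset$, equivalently $c\notin \mathrm{pr}(Z_{\mathrm{sing}})$. Thus it suffices to set $X:=\mathrm{pr}(Z_{\mathrm{sing}})$ and to prove that $X$ has measure zero (if $Z=\emptyset$ then $X=\emptyset$ and there is nothing to prove).

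The key step is to show that $Z_{\mathrm{sing}}$ is precisely the set of critical points of the smooth map $\mathrm{pr}|_Z\colon Z\to M/D$. Indeed, since the fibers of the submersion $\mathrm{pr}$ are the leaves of $D$, we have $\ker(d\mathrm{pr}_x)=D_x$, so $\ker\big(d(\mathrm{pr}|_Z)_x\big)=T_xZ\cap D_x$. As $T_xZ$ is a hyperplane in $T_xM$ and $\dim D_x=2k$, this intersection has dimension $2k$ if $D_x\subset T_xZ$ and $2k-1$ otherwise. Consequently the rank of $d(\mathrm{pr}|_Z)_x$ equals $(\dim M-1)-\dim(T_xZ\cap D_x)$, which attains the full value $\dim(M/D)=\dim M-2k$ exactly when $D_x\not\subset T_xZ$. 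Hence $x$ is a critical point of $\mathrm{pr}|_Z$ if and only if $x\in Z_{\mathrm{sing}}$.

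Finally I would invoke Sard's theorem for the smooth map $\mathrm{pr}|_Z\colon Z\to M/D$: its set of critical values has measure zero. By the previous step the critical values are exactly $\mathrm{pr}(Z_{\mathrm{sing}})=X$, so $X$ has measure zero, which completes the argument. The computation itself is routine; the only point requiring care is the clean identification of $Z_{\mathrm{sing}}$ with the critical locus of $\mathrm{pr}|_Z$, together with the observation that the zeros of $\wedge^k\pi_P$ on a leaf are exactly $P\cap Z$, after which Sard's theorem does all the work. Note that the smoothness of all the objects involved makes Sard's theorem applicable regardless of the relative dimensions of $Z$ and $M/D$.
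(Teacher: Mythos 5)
Your proof is correct and takes essentially the same route as the paper's: you identify $Z_{\mathrm{sing}}$ with the critical locus of $\mathrm{pr}|_Z$ via the kernel computation $\ker\big(d(\mathrm{pr}|_Z)_x\big)=D_x\cap T_xZ$ and a dimension count, and then apply Sard's theorem. The only difference is cosmetic: you spell out the preliminary reduction (a leaf is log symplectic iff it misses $Z_{\mathrm{sing}}$, via Lemma \ref{lem:notls} and the fact that the zeros of $\wedge^k\pi_P$ are exactly $P\cap Z$), which the paper had already established in the discussion preceding the proposition.
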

\begin{remark}\begin{itemize}
\item[a)] Of course $M/D$ is not always a smooth manifold. However one can always apply the above proposition locally, since for every point of $M$ there is a neighbourhood $U$ such that $U/(D|_U)$ is smooth. Indeed, as $U$ one can take the domain of a foliated chart for the foliation integrating $D$.
\item[b)]  Since $X$ has measure zero, the complement $(M/D)-X$ is a dense subset of $M/D$. (See \cite{LeeIntroSmooth} for more details). 
\end{itemize}
\end{remark} 
\begin{proof}
Let  $Z$ and  $Z_{\mathrm{sing}}$ be as above.
Consider the map $$\mathrm{pr}|_Z\colon Z\to M/D.$$
The set of critical points of $\mathrm{pr}|_Z$ is exactly $Z_{\mathrm{sing}}$. (This is seen noticing that at every $x\in Z$ the derivative ${(\mathrm{pr}|_Z)_*}_x$ has kernel $D_x\cap T_xZ$, and counting dimensions). Therefore the critical values of $\mathrm{pr}|_Z$ are exactly the points $c\in M/D$ such that  ${(\mathrm{pr}|_Z)^{-1}}(c)\cap Z_{\mathrm{sing}}\neq \emptyset$, i.e. exactly those for which $\mathrm{pr}^{-1}(c)$ is not log symplectic.
The classical Sard's theorem says that the set of critical values of any differentiable map between manifolds has zero measure.
\end{proof}
  
\subsection{Foliations by cosymplectic submanifolds}\label{subsubsec:PD}

Given a manifold $M$ with a Poisson structure $\pi$, an approach to construct a new Poisson structure on $M$ with some control of its symplectic foliation is the following: prescribe a foliation on $M$ so that the leaves of the foliation have an induced Poisson structure. In favorable cases the latter combine into a new Poisson structure on $M$. We take this approach to construct new almost regular Poisson manifolds {out of old ones, using a foliation by cosymplectic submanifolds. 
We start with a general statement (Lemma \ref{lem:pinew}), which we then specialize in order to  provide concrete  examples}.

\begin{remark}\label{rem:LA}
{Let $(P,\Pi)$ be a Poisson  vector space, i.e. $P$ is a vector space and $\Pi\in
\wedge^2P$. We denote by $\sharp\colon P^*\rightarrow P$  the map
induced by contraction with $\Pi$.} 

{
Let $W\subset P$ be a subspace.  
$W$ is called a \emph{cosymplectic}
subspace if $\sharp W^{\circ}\oplus W=P$. 
 In this case there is an induced bivector on $W$ (making $W$ a Poisson vector space), described as follows \cite{CrFePois}: its sharp map $\sharp_W \colon W^*\rightarrow W$
is given by \begin{eqnarray}\label{eq:inducedW}
\sharp_W  {\xi}=\sharp \tilde{\xi}
\end{eqnarray} where $\tilde{\xi}\in P^*$
is the unique extension of $\xi$ which annihilates $\sharp W^{\circ}$.}

{
For later reference, we also mention a few general facts about Poisson vector spaces  $(P,\Pi)$ . The image $\cO:=\sharp P^*$ is a symplectic vector space, and the symplectic form $\omega\in \wedge^2\cO^*$ is completely equivalent to $\Pi$.
For any subspace $Z\subset P$,
the symplectic orthogonal of $Z\cap \cO$ in
$(\cO,\omega)$ is $\sharp Z^{\circ}$. Hence $\sharp Z^{\circ}\cap Z$
 is the kernel of the restriction of $\omega$ to $Z\cap \cO$. Notice that when $Z$ is a cosymplectic subspace, this restriction is non-degenerate.} 
\end{remark}

{ 
Given a Poisson manifold $(M,\pi)$,
a \emph{cosymplectic
submanifold} $N$ is one whose tangent spaces are cosymplectic.  
By the above $N$ inherits a bivector field  $\pi_N$, which is a Poisson structure.
} 
\begin{lemma}\label{lem:pinew}
Let $(M,\pi)$ be a Poisson manifold so that $\pi$ has full rank (i.e. is symplectic) on a dense subset. Let $D$ be an involutive distribution on $M$ whose integral leaves are cosymplectic submanifolds.  

Consider the bivector field $\pi^{\mathrm{new}}$ on $M$ which, on each leaf $N$ of $D$, agrees with the Poisson bivector field $\pi_N$ induced by $\pi$.
Then $(M,\pi^{\mathrm{new}})$ is an almost regular Poisson structure. 
\end{lemma}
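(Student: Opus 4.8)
The plan is to produce $\pi^{\mathrm{new}}$ by a single global formula (which settles smoothness), then verify it is Poisson by working leafwise along $D$, and finally invoke Theorem \ref{thm:aregpois}, taking $D$ itself as the distribution witnessing almost regularity. First I would note that the cosymplectic hypothesis, which holds at every $x\in M$, yields a smooth splitting $TM = D\oplus \sharp D^{\circ}$: here $D^{\circ}\subset T^*M$ is a smooth subbundle, and the pointwise condition $\sharp D^{\circ}_x\oplus D_x = T_xM$ forces $\sharp|_{D^{\circ}}$ to be fibrewise injective of constant rank, so $\sharp D^{\circ}$ is a smooth subbundle complementary to $D$. Let $P\colon TM\to TM$ be the resulting smooth projection onto $D$ along $\sharp D^{\circ}$, with transpose $P^*$. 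I would then claim
$$\pi^{\mathrm{new}} = (\wedge^2 P)\,\pi,$$
which is a smooth bivector field tangent to $D$.

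To check this identity pointwise I would use the description \eqref{eq:inducedW} of the induced bivector on the cosymplectic subspace $W=D_x$. For $\xi\in T^*_xM$, the covector $P^*\xi$ restricts to $\xi$ on $D_x$ and annihilates $\sharp D^{\circ}_x$, so it is precisely the extension $\tilde\xi$ featuring in \eqref{eq:inducedW}. Skew-symmetry of $\sharp$ then shows $\sharp(P^*\xi)\in D_x$ (pairing with any $\beta\in D^{\circ}_x$ gives $-\langle P^*\xi,\sharp\beta\rangle=0$), whence $\sharp_{D_x}(\xi|_{D_x})=\sharp(P^*\xi)$ and $\pi^{\mathrm{new}}_x(\xi,\eta)=\pi_x(P^*\xi,P^*\eta)$, i.e. the claimed formula.

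For the Poisson property I would argue via the foliation. Since $\pi^{\mathrm{new}}$ is tangent to the involutive distribution $D$, the trivector field $[\pi^{\mathrm{new}},\pi^{\mathrm{new}}]$ is again tangent to $D$, and the Schouten bracket of multivector fields tangent to an involutive distribution restricts to the leafwise Schouten bracket. Hence on each leaf $N$ of $D$ one has $[\pi^{\mathrm{new}},\pi^{\mathrm{new}}]|_N=[\pi_N,\pi_N]=0$, the last equality because $\pi_N$ is the Poisson structure induced on the cosymplectic submanifold $N$ (Remark \ref{rem:LA}). As the leaves cover $M$, the bracket vanishes identically. Finally, for almost regularity I would apply Theorem \ref{thm:aregpois} with the distribution $D$. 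Writing $d:=\mathrm{rk}(D)$, the rank of $\pi^{\mathrm{new}}$ is at most $d$ everywhere; and at any point $x$ where $\pi$ is symplectic, the symplectic form $\omega$ restricts nondegenerately to the cosymplectic subspace $D_x$ (Remark \ref{rem:LA}), so $\pi_N$ has full rank $d$ there and $\mathrm{rk}(\pi^{\mathrm{new}}_x)=d$. Since $\pi$ is symplectic on a dense set, the maximal rank of $\pi^{\mathrm{new}}$ equals $d$ and $M_{\mathrm{reg}}$ contains this dense set, giving condition i); and at $x\in M_{\mathrm{reg}}$ the image of $\sharp^{\mathrm{new}}_x$ lies in $D_x$ with dimension $d=\dim D_x$, hence equals $D_x=T_xL$ for the symplectic leaf $L$, giving condition ii).

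The delicate point, and the one I expect to require genuine work, is the smoothness of $\pi^{\mathrm{new}}$: a priori it is only prescribed leaf by leaf, and one must see that these pieces assemble smoothly. The identity $\pi^{\mathrm{new}}=(\wedge^2 P)\pi$ is exactly what makes this transparent, so establishing it — together with the constant-rank argument that $\sharp D^{\circ}$ is an honest subbundle — is the crux; the Poisson property and almost regularity then follow formally from the earlier results.
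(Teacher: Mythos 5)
Your proposal is correct and takes essentially the same route as the paper: the paper's proof likewise gets smoothness from the uniqueness of the extension $\tilde{\xi}$ in eq. \eqref{eq:inducedW} (your identity $\pi^{\mathrm{new}}=(\wedge^2 P)\pi$, with $P$ the projection onto $D$ along the subbundle $\sharp D^{\circ}$, simply makes that one-line argument explicit), deduces the Poisson property from the leafwise Poissonness of $\pi_N$, and concludes almost regularity via Thm. \ref{thm:aregpois}. The only cosmetic difference is in verifying conditions i) and ii): the paper cites the fact that the symplectic leaves of $(N,\pi_N)$ are the intersections of the symplectic leaves of $(M,\pi)$ with $N$, whereas you compute ranks directly from the nondegeneracy of the restriction of the symplectic form to the cosymplectic subspaces $D_x$ at points where $\pi$ is symplectic --- both are valid.
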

\begin{proof}
To show that $\pi^{\mathrm{new}}$ is a smooth bivector field on $M$ we argue that the corresponding sharp map $T^*M\to TM$ is smooth. {This holds since the extension $\tilde{\xi}$ of $\xi$ in eq. \eqref{eq:inducedW} is unique.}
Since the restriction of $\pi^{\mathrm{new}}$ to each leaf of $D$ is Poisson, it follows that $\pi^{\mathrm{new}}$ is a Poisson bivector field on $M$.

 Let $N$ be a leaf of the distribution $D$.    
The symplectic leaves of $(N,\pi_N)$ are the 
intersection of the symplectic leaves of $(M,\pi)$ with $N$ \cite[\S 9]{CrFePois}. {Denote by $M_{\mathrm{reg}}$ the dense subset of $M$ on which $\pi$ is symplectic.} At points of $M_{\mathrm{reg}}$, the tangent space of the symplectic leaf of $(N,\pi_N)$ is the tangent space to $TN$. In particular,
 the set of points where  $\pi^{\mathrm{new}}$ has maximal rank contains $M_{\mathrm{reg}}$, so it is dense.
It follows that $(M, \pi^{\mathrm{new}})$
is almost regular by Thm. \ref{thm:aregpois}.
\end{proof}

\begin{remark}\label{rem:open}
If a distribution $D$ on a Poisson manifold $(M,\pi)$ satisfies the cosymplectic condition $\sharp D_x^{\circ}\oplus D_x=T_xM$ at a point $x$, {by continuity}  it satisfies it in a neighbourhood of $x$ in $M$. 
\end{remark}
 Recall that on a log symplectic  manifold $M$ of dimension $2n$ with exceptional hypersurface $Z$, the symplectic leaves are given by  the connected components of $M-Z$, which are open, and the symplectic leaves lying in $Z$, which 
have dimension $2n-2$.

\begin{prop}\label{prop:log}
Let $(M,\pi)$ be a log symplectic  manifold. Let $D$ be an involutive distribution on $M$ such that
\begin{equation}\label{eq:cosyZ}
\sharp D_x^{\circ}\oplus D_x=T_xM
\end{equation}
 at all $x\in Z$. Then there exists a tubular neighbourhood $U$ of $Z$ in $M$ on which the Poisson bivector field $\pi^{\mathrm{new}}$ (defined as in Lemma \ref{lem:pinew}) is  
 almost regular.  
\end{prop}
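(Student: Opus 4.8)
The plan is to reduce the statement directly to Lemma \ref{lem:pinew}, the only genuine work being to produce a neighbourhood of $Z$ on which the cosymplectic hypothesis of that lemma is available. First I would introduce the subset
\[
V:=\{x\in M : \sharp D_x^{\circ}\oplus D_x=T_xM\},
\]
i.e. the set of points at which the leaf of $D$ through $x$ is cosymplectic. By the hypothesis \eqref{eq:cosyZ} we have $Z\subset V$, and Remark \ref{rem:open} shows that $V$ is open. Since $Z$ is a closed embedded hypersurface of $M$, the tubular neighbourhood theorem provides a tubular neighbourhood of $Z$, and by shrinking the tube (choosing a sufficiently small radius function on the normal bundle, using that $V$ is an open set containing $Z$) I can arrange for a tubular neighbourhood $U$ of $Z$ with $U\subset V$. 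This $U$ is the neighbourhood claimed in the statement.

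Next I would check that $(U,\pi|_U)$ together with the distribution $D|_U$ satisfies the hypotheses of Lemma \ref{lem:pinew}. The distribution $D|_U$ is involutive because $D$ is. Its integral leaves are cosymplectic submanifolds: being cosymplectic is a pointwise condition on tangent spaces, and it holds at every point of $U$ precisely because $U\subset V$. Finally, $\pi|_U$ has full rank on a dense subset of $U$: as $(M,\pi)$ is log symplectic, $\pi$ is symplectic exactly on $M-Z$, hence on $U-Z$, and $U-Z$ is dense in $U$ because $Z$ is a codimension-one submanifold, so $Z\cap U$ is closed with empty interior in $U$.

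With these verifications, Lemma \ref{lem:pinew} applies to $(U,\pi|_U,D|_U)$ and yields that the bivector field $\pi^{\mathrm{new}}$ is an almost regular Poisson structure on $U$. Here I would emphasise that $\pi^{\mathrm{new}}$ is unambiguously defined at every point of $U$: its sharp map is given pointwise by the extension formula \eqref{eq:inducedW}, which only uses the cosymplectic splitting $\sharp D_x^{\circ}\oplus D_x=T_xM$ at the single point $x$, so no saturation of $U$ by leaves of $D$ is needed.

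The main obstacle I anticipate is organisational rather than conceptual: one must be careful to state that the tubular neighbourhood can be taken inside $V$ (standard for closed embedded submanifolds, but worth recording) and that $\pi^{\mathrm{new}}$ is globally well defined on $U$ even though the leaves of $D|_U$ are only pieces of the leaves of $D$. Both are handled by the pointwise nature of \eqref{eq:inducedW} and the openness of the cosymplectic condition from Remark \ref{rem:open}; everything else is an immediate invocation of Lemma \ref{lem:pinew}.
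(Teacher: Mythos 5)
Your proposal is correct and follows essentially the same route as the paper's proof: use Remark \ref{rem:open} (openness of the cosymplectic condition, which holds along $Z$ by hypothesis \eqref{eq:cosyZ}) to find a tubular neighbourhood $U$ of $Z$ inside the cosymplectic locus, and then apply Lemma \ref{lem:pinew} to $(U,\pi|_U)$, with density of $U-Z$ supplying the full-rank hypothesis. The extra verifications you record (the pointwise nature of \eqref{eq:inducedW}, shrinking the tube into the open set $V$) are details the paper leaves implicit, not a different argument.
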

\begin{proof}
  By Remark \ref{rem:open} there is a tubular neighbourhood $U$ of $Z$ in $M$ on which the decomposition  $\sharp D^{\circ}\oplus D=TM$ holds. Apply
  Lemma \ref{lem:pinew} to $(U,\pi|_U)$.
\end{proof}

\begin{remark}\label{rem:flog}
In Prop. \ref{prop:log}, $\pi^{\mathrm{new}}$ is actually a {\fls} structure on $U$.
Indeed $\pi$ decomposes as $\pi=\pi^{\mathrm{new}}+\pi^{\mathrm{trans}}$ where 
$\pi^{\mathrm{new}}\in \Gamma(\wedge^2D)$ and $\pi^{\mathrm{trans}}\in \Gamma (\wedge^2 \sharp D^{\circ})$ \cite[\S 9.1]{CrFePois}. So $\wedge^{n}\pi\cong\wedge^k \pi^{\mathrm{new}}\otimes \wedge^{n-k} \pi^{\mathrm{trans}}\in \Gamma(\wedge^{2k}D\otimes \wedge^{2(n-k)}\sharp D^{\circ})$
where $2n=\dim(M)$ and $2k=rank(D)$. Now $\wedge^{n-k} \pi^{\mathrm{trans}}$ is a nowhere vanishing section of the line bundle $\wedge^{2(n-k)}\sharp D^{\circ}$, hence the fact that $\wedge^{n}\pi$ is transverse to the zero section of $\wedge^{2n}TM$ implies that $\wedge^k \pi^{\mathrm{new}}$ is transverse to the zero section of $\wedge^{2k}D$.\end{remark}

We now describe a simple situation in which  Prop. \ref{prop:log} applies.
Let $(\cO,\omega)$ be a compact symplectic manifold. Let $\psi\colon \cO\to\cO$ be a symplectomorphism.
Endow
$$Z:=([0,1]\times \cO)/\;(0,x)\sim (1,\psi(x))$$
with the corank 1 Poisson structure $\pi_Z$ induced by the obvious product Poisson structure on $[0,1]\times \cO$. The   symplectic leaves of $\pi_Z$  are exactly the fibers of the canonical projection $Z\to S^1$, and are all symplectomorphic to $(\cO,\omega)$.
It is well-known that 
$$(Z\times \RR, \pi:=t\partial_t\wedge\partial_{\theta}+\pi_Z)$$
is a log symplectic manifold, where $\theta$ denotes   the coordinate on $[0,1]$ and $t$ the one on $\RR$. 

\begin{cor}\label{cor:log}
In the above setting, let  $\Delta$ be an involutive distribution on $\cO$ by symplectic subspaces, and assume that\footnote{I.e., assume that 
$\psi$ preserves the symplectic foliation integrating $\Delta$.}
    $\psi_*\Delta=\Delta$. Define the distribution $$D:= \RR\partial_t\oplus \RR \partial_{\theta}\oplus \Delta$$ on $Z\times \RR$. Then $D$ satisfies the assumptions of Prop. \ref{prop:log},
and therefore $\pi^{\mathrm{new}}$ is an almost regular Poisson structure on $Z\times (-\epsilon,\epsilon)$ for some $\epsilon>0$.  
\end{cor}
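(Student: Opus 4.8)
The plan is to verify directly that $D$ satisfies the two hypotheses of Prop. \ref{prop:log} for the log symplectic manifold $(Z\times\RR,\pi)$: namely that $D$ is an involutive distribution, and that the cosymplectic condition \eqref{eq:cosyZ} holds along the exceptional hypersurface. Once this is established, Prop. \ref{prop:log} yields a tubular neighbourhood $U$ of the exceptional hypersurface on which $\pi^{\mathrm{new}}$ is almost regular, and taking $U=Z\times(-\epsilon,\epsilon)$ gives the statement. The first thing to pin down is that the exceptional hypersurface of $(Z\times\RR,\pi)$ is exactly $Z\times\{0\}$: the term $t\,\partial_t\wedge\partial_\theta$ vanishes precisely at $t=0$, where $\pi$ coincides with $\pi_Z$ and its rank drops from $2n$ to $2n-2$.

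For involutivity, I would first note that the hypothesis $\psi_*\Delta=\Delta$ ensures that $\Delta$, pulled back from $\cO$, descends to a well-defined smooth distribution on the mapping torus $Z$ and hence on $Z\times\RR$; thus $D=\RR\partial_t\oplus\RR\partial_\theta\oplus\Delta$ is a genuine smooth distribution. Involutivity then reduces to three checks: $[\partial_t,\partial_\theta]=0$; the brackets of $\partial_t$ and $\partial_\theta$ with sections of $\Delta$, which remain in $\Delta$ because $\Delta$ is independent of $t$ and $\theta$; and $[\Gamma(\Delta),\Gamma(\Delta)]\subset\Gamma(\Delta)$, which is precisely the involutivity of $\Delta$ on $\cO$.

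The core of the argument is the cosymplectic condition at a point $x=(z,0)$ with $z=(\theta,q)$. Here $T_x(Z\times\RR)=\RR\partial_t\oplus\RR\partial_\theta\oplus T_q\cO$ and $D_x=\RR\partial_t\oplus\RR\partial_\theta\oplus\Delta_q$, so $D_x^{\circ}$ consists of the covectors with vanishing $dt$- and $d\theta$-components lying in the annihilator $\Delta_q^{\circ}\subset T_q^*\cO$. Since $t=0$, the sharp map $\sharp$ of $\pi$ at $x$ agrees with that of $\pi_Z$: it kills $dt$ and $d\theta$ and acts as $\sharp_\omega$ on $T_q^*\cO$. Consequently $\sharp D_x^{\circ}=\sharp_\omega(\Delta_q^{\circ})$, which is the symplectic orthogonal $\Delta_q^{\perp_\omega}$ of $\Delta_q$ inside $(T_q\cO,\omega)$. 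Because $\Delta_q$ is a symplectic subspace, $\Delta_q\oplus\Delta_q^{\perp_\omega}=T_q\cO$, and therefore
\begin{equation*}
\sharp D_x^{\circ}\oplus D_x=\Delta_q^{\perp_\omega}\oplus\bigl(\RR\partial_t\oplus\RR\partial_\theta\oplus\Delta_q\bigr)=\RR\partial_t\oplus\RR\partial_\theta\oplus T_q\cO=T_x(Z\times\RR),
\end{equation*}
which is exactly \eqref{eq:cosyZ}.

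I expect the only delicate points to be bookkeeping rather than conceptual: correctly observing that $\sharp$ degenerates to $\sharp_{\pi_Z}$ along $t=0$ (so that the whole computation takes place inside the symplectic leaf $(\cO,\omega)$), and invoking the standard identity $\sharp_\omega(\Delta_q^{\circ})=\Delta_q^{\perp_\omega}$ relating the annihilator to the symplectic orthogonal. The hypothesis that $\Delta$ consists of symplectic subspaces is used precisely to split $T_q\cO=\Delta_q\oplus\Delta_q^{\perp_\omega}$, and this is what makes the cosymplectic sum simultaneously direct and exhaustive; this is the crux of the verification. With \eqref{eq:cosyZ} checked on $Z\times\{0\}$, Remark \ref{rem:open} extends it to a tubular neighbourhood $Z\times(-\epsilon,\epsilon)$, and Prop. \ref{prop:log} finishes the proof.
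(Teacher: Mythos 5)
Your proposal is correct and follows essentially the same route as the paper: you verify involutivity of $D$ (the paper phrases this via the flow of $\partial_\theta$ preserving $\Delta$, you via explicit brackets, using $\psi_*\Delta=\Delta$ in the same way) and establish eq.~\eqref{eq:cosyZ} at points of $Z\times\{0\}$ by identifying $\sharp_x D_x^{\circ}$ with the symplectic orthogonal $\Delta_x^{\perp_\omega}$ in $(T_x\cO,\omega)$ and using that $\Delta_x$ is symplectic to obtain the direct-sum splitting. Your direct computation of $\sharp D_x^{\circ}=\sharp_\omega(\Delta_q^{\circ})$ at $t=0$ is exactly the special case of the general fact in Rem.~\ref{rem:LA} that the paper invokes, so the two arguments coincide in substance.
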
 
\begin{proof}
We have to show that  $D$ satisfies the assumptions of Prop. \ref{prop:log}. The involutivity of 
$D$ follows from the following fact: the flow of $\partial_{\theta}$  on $Z$ preserves
the distribution $\Delta$.

{We show that eq. \eqref{eq:cosyZ} holds at all   $x\in Z$. 
By  Rem. \ref{rem:LA},
 $\sharp_xD_x^{\circ}$ is the symplectic orthogonal of $D_x\cap T_x\cO=\Delta_x$ in $T_x\cO$. So $\sharp_xD^{\circ}\oplus   \Delta_x= T_x\cO$, and the statement follows from this.}
 \end{proof}

We exhibit   examples for Cor. \ref{cor:log}. Notice that in that corollary the symplectic leaves of $\pi^{\mathrm{new}}$ at points of $Z$ are exactly the leaves of $\Delta$.
\begin{ex}
Consider the symplectic manifold $(\cO,\omega):=(\T^4,dx_1\wedge dx_2+dx_3\wedge dx_4)$. 

\begin{itemize}
\item[a)] 
 Take $\psi=\id_{\T^4}$. We have $Z\times (-\epsilon, \epsilon)=\T^5\times (-\epsilon, \epsilon)$, with log symplectic structure $\pi=t\partial_t\wedge\partial_{\theta}+\partial_{x_1}\wedge\partial_{x_2}+
\partial_{x_3}\wedge\partial_{x_4}.$ 

Take $$\Delta:=
\R(\sum_{i=1}^4a_i\partial_{x_i})\oplus \RR(\sum_{i=1}^4b_i\partial_{x_i})$$
where $a_1,\dots,b_4\in \R$ satisfy $(a_1b_2-a_2b_1)+(a_3b_4-a_4b_3)\neq 0$.
$\Delta$ is an involutive symplectic distribution on $\T^4$, and its leaves are all diffeomorphic to each other and may be diffeomorphic to a 2-torus, to a cylinder or to a plane.

In the simple case $\Delta:=
\R \partial_{x_1}\oplus \RR \partial_{x_2}$, on $\T^5\times (-\epsilon, \epsilon)$ we
 obtain the almost regular Poisson structure $\pi^{\mathrm{new}}=t\partial_t\wedge\partial_{\theta}+\partial_{x_1}\wedge\partial_{x_2}$. 

\item[b)] Fix $n,m\in \ZZ$. Let $\psi$ be the symplectomorphism of $\cO=\T^2\times \T^2$ which in matrix form is given by $
\left(\begin{array}{cc}1 & 0 \\n & 1\end{array}\right)\times
\left(\begin{array}{cc}1 & 0 \\m & 1\end{array}\right),
$  where we identify $\T^2$ with $\R^2/\Z^2$.(Notice that both matrices lie in $SL(2,\Z)$.)
$Z$ is the Whitney sum $X_n\oplus X_m$ of two $\T^2$-bundles over $S^1$, and the  6-dimensional manifold $Z\times {\RR}$ is log symplectic.

Let $\Delta:=
\R\partial_{x_1}\oplus \RR\partial_{x_2}$, and consider the corresponding Poisson structure $\pi^{\mathrm{new}}$ on $Z\times (-\epsilon, \epsilon)$.
The symplectic leaves of $\pi^{\mathrm{new}}$ lying in $Z$ are the 2-dimensional fibers of the $\T^2$-bundle 
$X_n\to S^1$, while the symplectic leaves on the complement of $Z$ are 4-dimensional and given by copies of  $X_n\times (0, \epsilon)$ and  $X_n\times (- \epsilon,0)$.

One obtains a more interesting symplectic foliation, when $n\neq 0$ and $n\neq -m$, by choosing the symplectic involutive distribution on $\cO$ to be
$$\Delta:=
\R(\partial_{x_1}+\partial_{x_3})\oplus \RR(\partial_{x_2}+\frac{m}{n}\partial_{x_4}).$$ Its leaves are again 2-tori, which however now ``wind around $\cO$''.
 \end{itemize}
\end{ex}

\subsection{Linear Poisson manifolds}\label{subsec:lin}
Linear Poisson structures (those given by duals of Lie algebras) are  {\emph{seldom}} almost regular, as we now discuss.

 \begin{ex}\label{ex:su2}
 The Poisson manifold $\mathfrak{su}(2)^*$, endowed with the canonical (linear) Poisson structure, is \emph{not} almost regular. The  symplectic foliation $\cF$ consists of all vector fields on $\mathfrak{su}(2)^*\cong \RR^3$ which are tangent to the leaves of $\cF$, which are exactly all concentric spheres about the origin and the origin itself. The Poisson structure is not
 almost regular
  since the regular foliation by concentric spheres on $\mathfrak{su}(2)^*-\{0\}$ can not be extended to a regular foliation on $\mathfrak{su}(2)^*$, so that the second condition in Thm. \ref{thm:aregpois} is not\footnote{The first condition is satisfied, since $M_{\mathrm{reg}}=\mathfrak{su(2)}^*-\{0\}$.} satisfied. Alternatively one can use Prop. \ref{prop:firstc} and notice that $\cF$ is not   projective by  eq. \eqref{eq:nicecharPois}: $dim(\h_p)=1$ for $p\in  \mathfrak{su(2)}^*-\{0\}$, but $\h_p=0$ at $p=0$, since every one form annihilating the concentric spheres must vanish at the origin. 
\end{ex}

\begin{remark}\label{rem:su2}
As mentioned in the introduciton, although the singular foliation $\cF$ associated to the Lie-Poisson manifold $ \mathfrak{su(2)}^*=:M\cong \RR^3$ is not projective,
it is not too far from being projective. The anchor map induces at the level of sections a surjective map $\sharp\colon \Gamma_c(T^*M)\to \cF$.
The kernel of this map is the  submodule 
generated by\footnote{Indeed, by applying the standard inner product in $\RR^3$, this statement is equivalent to saying that any smooth vector field on $\RR^3$ that is radial (i.e. perpendicular to the concentric spheres about the origin) is a $C^{\infty}(\RR^3)$-multiple of the Euler vector field $x\partial_x+
y\partial_y+z\partial_z$.}
$\alpha:=d({r^2}/{2})=r\cdot dr=x\cdot dx+y\cdot dy+z\cdot dz$,
hence it is a projective module. Denoting by $\RR\times M$ the trivial line bundle, we hence obtain an exact sequence of $C^{\infty}(M)$-modules \begin{equation}
\label{eq:sesmod}
0\to \Gamma_c(\RR\times M)\overset{\cdot \alpha}{\to} \Gamma_c(T^*M) \overset{\sharp}{\to} \cF\to 0,
\end{equation}
i.e. a projective resolution of $\cF$ of length 1.

For any $p\in M$, the relation to the germinal isotropy Lie algebras $\h_p$ is as follows. By quotienting every term of \eqref{eq:sesmod} with the ideal induced by $I_p$ (the functions vanishing at $p$), we obtain a sequence of vector spaces
$0\to \RR \overset{\cdot \alpha_p}{\to}  T_p^*M \overset{\sharp_p}{\to} \cF/I_p\cF \to 0.$ The image of the first map is exactly $\h_p$, by Lemma \ref{lem:nicechar} i). Notice that  this sequence fails to be exact at $\RR$ (i.e. the map $\cdot \alpha_p$ is not injective) for $p=0$.
\end{remark}
 
More generally we have the following.
\begin{prop}
Let $\g$ be a Lie algebra. $\g^*$ is an almost regular Poisson manifold if{f} the union of the symplectic leaves\footnote{As is well-known, the symplectic leaves of the Lie-Poisson structure are exactly the orbits for the coadjoint action of any connected Lie group integrating $\g$.} whose tangent spaces agree with the constant distribution $Z(\g)^{\circ}$ is dense in $\g^*$. Here $Z(\g)^{\circ}$ denotes the annihilator of the center of $\g$.
\end{prop}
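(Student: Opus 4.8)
The plan is to route everything through the germinal isotropy $\h_x$ and the characterization of almost regularity as ``$\dim\h_x$ is constant'' (Prop.~\ref{prop:firstc}, with eq.~\eqref{eq:nicecharPois} for the Poisson case). First I would fix the identifications for the Lie--Poisson structure: trivializing $T^*\g^*\cong\g^*\times\g$, the anchor at $x\in\g^*$ is $\rho_x(\xi)=-\mathrm{ad}^*_\xi x$, so $\ker\rho_x=\g_x$ is the coadjoint stabilizer and, by skew-symmetry of $\rho$, the leaf $L$ through $x$ satisfies $T_xL=\mathrm{Im}\,\rho_x=\g_x^{\circ}$. Hence the condition ``$T_xL=Z(\g)^{\circ}$'' is equivalent to $\g_x=Z(\g)$, and since central elements always act trivially we have $Z(\g)\subseteq\g_x$ for every $x$; thus the set described in the statement is exactly $U:=\{x\in\g^*:\g_x=Z(\g)\}=\{x:\dim\g_x=\dim Z(\g)\}$.

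Next I would pin down $\h_x$. Extending a central element by the \emph{constant} section of $\g^*\times\g$ shows $Z(\g)\subseteq\h_x$ for every $x$, while Prop.~\ref{prop:linPois} gives $\h_0=Z(\g)$; so $\dim Z(\g)$ is the minimum value of the lower-semicontinuous function $x\mapsto\dim\h_x$. Consequently $\g^*$ is almost regular $\iff\dim\h_x$ is constant $\iff\h_x=Z(\g)$ for all $x$. Using eq.~\eqref{eq:nicecharPois} together with $T_yL=\g_y^{\circ}$, I would record the working description
$$\h_x=\{\xi\in\g:\ \exists\ \text{a smooth }\tilde\xi\colon U\to\g\text{ near }x,\ \tilde\xi(x)=\xi,\ \tilde\xi(y)\in\g_y\ \forall y\in U\},$$
i.e. $\h_x$ consists of those covectors that extend to a local smooth section of the (a priori non-constant-rank) field of stabilizers $y\mapsto\g_y$.

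For the forward direction, assuming $\g^*$ almost regular gives $\h_x=Z(\g)$ everywhere; since on the dense open set $M_{\mathrm{reg}}$ the stabilizer dimension is locally constant, the proof of Thm.~\ref{thm:aregpois} yields $\h_x=\g_x$ there, whence $\g_x=Z(\g)$ on $M_{\mathrm{reg}}$ and $U$ is dense. For the converse, suppose $U$ is dense and take $\xi\in\h_x$ with local extension $\tilde\xi$ as above; at the (dense) points $y\in U$ one has $\tilde\xi(y)\in\g_y=Z(\g)$, so by continuity of $\tilde\xi$ and closedness of the linear subspace $Z(\g)$ one gets $\xi=\tilde\xi(x)\in Z(\g)$. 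Hence $\h_x=Z(\g)$ for all $x$, $\dim\h_x$ is constant, and Prop.~\ref{prop:firstc} makes $\g^*$ almost regular.

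The main obstacle is the converse continuity step, which is where the density hypothesis is genuinely used: it rests on the description of $\h_x$ as germs of sections of the stabilizer field and on $Z(\g)$ being a \emph{closed} linear subspace, so that a section valued in $Z(\g)$ on a dense set is valued in $Z(\g)$ at the limit point. A secondary point to justify cleanly is that $M_{\mathrm{reg}}$ is dense for a linear (hence polynomial, real-analytic) bivector field --- the rank drops only on a proper analytic subvariety --- which is what lets the forward direction promote ``$\g_x=Z(\g)$ on $M_{\mathrm{reg}}$'' to ``$U$ dense''.
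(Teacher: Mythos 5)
Your proposal is correct and takes essentially the same route as the paper: the paper's proof also rests on the inclusion $Z(\g)\subset \h_x$ for all $x$, on $\h_0=Z(\g)$ from Prop.~\ref{prop:linPois}, and on Prop.~\ref{prop:projE} to conclude that almost regularity is equivalent to $\h_x=Z(\g)$ everywhere, then identifies the dense union of leaves via $D=\h^{\circ}=Z(\g)^{\circ}$ and Thm.~\ref{thm:aregpois}. The only difference is cosmetic: for ``$\Leftarrow$'' the paper simply invokes Thm.~\ref{thm:aregpois} with the constant distribution $D=Z(\g)^{\circ}$, whereas your density-plus-closedness continuity argument showing $\h_x\subset Z(\g)$ is exactly the argument inside the ``$\Leftarrow$'' step of that theorem's proof, so you are inlining it rather than citing it.
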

\begin{proof} The implication ``$\Leftarrow$'' follows immediately from Thm. \ref{thm:aregpois}. For the other implication, we remark that
for any Lie algebra $\g$ one checks in a straightforward way that $Z(\g)\subset \h_{\xi} $ for all $\xi\in \g^*$.
By Propositions \ref{prop:projE} and \ref{prop:linPois}, $\g^*$ is almost regular if{f} $dim(\h_{\xi})=dim(Z(\g))$ for all $\xi\in \g^*$, i.e. if $\h_{\xi}=Z(\g)$. We conclude using Thm. \ref{thm:aregpois}, recalling that $D=\h^{\circ}$.  
\end{proof}

In particular, when $Z(\g)=\{0\}$, $\g^*$ is almost regular if{f} the union of the open symplectic leaves is dense.
For instance, the Poisson manifold $(\RR^2, x\partial_x\wedge \partial_y)$
-- which is the dual of the two-dimensional Lie algebra with bracket $[e_1,e_2]=e_1$ -- 
 is almost regular, and even more, it is log symplectic.

\subsection{Actions}\label{subsec:actions}

It is tempting to use group actions to construct almost regular Poisson structures. Unfortunately the simplest possible attempt, which we now outline,   only delivers regular Poisson structures.

Let $\g$ be a Lie algebra and $r\in \wedge^2\g$ such that\footnote{We then obtain a triangular Lie bialgebra, given by the Lie algebra structure on $\g$ and by the cocycle $\delta:=[r,\cdot]\colon \g \to \wedge^2 \g$  \cite[\S 2.2, \S 2.4]{Ping}.} $[r,r]=0$.
Let $M$ be any manifold and $\sigma\colon \g \to \vX(M)$ a Lie algebra homomorphism, i.e. an infinitesimal (right) action of the Lie algebra $\g$ on $M$.
The bivector field $\pi:=(\wedge^2\sigma)(r)$ is clearly a Poisson bivector field\footnote{The infinitesimal action of the Lie algebra $\g$ on $(M,\pi)$ -- when integrable -- is then integrated by a Poisson action of the simply connected Poisson-Lie group integrating the above Lie bialgebra.} on $M$.  The image of the contraction map $r^{\sharp}\colon \g^*\to \g$ is a Lie subalgebra $\mathfrak{k}\subset \g$. We do not lose any information by restricting ourselves to the  infinitesimal action of $\mathfrak{k}$ on $M$ (which we still denote by $\sigma$) and regarding $r$ as a  (full rank) element of $\wedge^2\mathfrak{k}$.

Assume that the  transformation algebroid $\mathfrak{k}\times M$ is almost injective. Unfortunately, even with this assumption the Poisson manifold $(M,\pi)$ is \emph{not} almost regular in general. Indeed, the projective singular foliation $\cF_{\mathrm{act}}$ induced by the infinitesimal action (see Ex. \ref{ex:fact}) does not necessarily agree with the singular foliation $\cF_{\mathrm{Pois}}$ induced by the Poisson structure $\pi$ as in \S \ref{subsec:pois}. In general, we have only $$\cF_{\mathrm{Pois}}\subset \cF_{\mathrm{act}},$$
which follows immediately from the relation
$$\sigma_p\circ r^{\sharp} \circ \sigma_p^*=\pi_p^{\sharp}\colon T_p^*M\to T_pM$$
for all $p\in M$, where $\sigma_p\colon \mathfrak{k}\to T_pM$.
Notice that if $ker(\sigma_p)+\mathrm{Im}(r^{\sharp} \circ \sigma_p^*)=\mathfrak{k}$ for all points $p$,
 then the leaves of $\cF_{\mathrm{Pois}}$ and $\cF_{\mathrm{act}}$ coincide.
If the infinitesimal action is almost free, i.e. $\sigma_p$ is injective at every point, then we obtain  $\cF_{\mathrm{Pois}}=\cF_{\mathrm{act}}$, a regular foliation associated to a  regular Poisson structure.
 
A concrete example of the above is the following. 

\begin{ex}
As in Ex. \ref{ex:fact}, to which we refer for the notation, consider the action of $\CC^*$ on $\CC^2$ given by $z\cdot(w_1,w_2)=(zw_1,z^kw_2)$ for some $k\in \ZZ$. Denote by $\sigma \colon \mathfrak{k}\to \vX(\CC^2)$ the corresponding infinitesimal action (here
$\mathfrak{k}$ is the Lie algebra of the Lie group $\CC^*$). Then   $\cF_{\mathrm{act}}=\langle E_1+kE_2, X_1+kX_2\rangle$, a singular foliation generated by linear vector fields. Consider the element $r\in \wedge^2 \mathfrak{k}\cong \RR$ for which\footnote{Identifying $\CC^*\cong (\RR_+,\cdot) \times  S^1$ and using standard coordinates $R$ and $\theta$, we have $r=\partial_R\wedge\partial_{\theta}$.}
 $(\wedge^2\sigma)(r)=(E_1+kE_2)\wedge (X_1+kX_2)=:\pi$. Notice that $\pi$ is a Poisson bivector field that vanishes quadratically at the origin, so the singular foliation $\cF_{\mathrm{Pois}}$   induced by $\pi$ is generated by quadratic vector fields. In particular, $\cF_{\mathrm{Pois}}\subsetneq\cF_{\mathrm{act}}$. Notice  however that the leaves of the two foliations coincide, since the vanishing set of $E_1+kE_2$ agrees with the one of $X_1+kX_2$.

 We saw in Ex. \ref{ex:fact} that $\cF_{\mathrm{act}}$ is projective. On the other hand, when $k\neq 0$, $\cF_{\mathrm{Pois}}$ is not projective (so $\pi$ is not almost regular). This follows from  Lemma \ref{lem:nicechar} i) and {Prop.  \ref{prop:projE}}, for the Poisson structure $\pi$ is regular of rank two at points $p$ away from the origin (so $dim(\h_p)=2$ there), while at the  origin we have $\h_0=0$, since any 1-form that annihilates the vector field $E_1+kE_2$ must vanish at the origin. When $k=0$, we have $\pi=E_1 \wedge X_1=(x_1^2+y_1^2)\partial_{x_1}\wedge \partial_{y_1}$, which is clearly an almost regular Poisson structure on $\CC^2$.
 \end{ex}

\section{The holonomy groupoid is a Poisson groupoid}\label{sec:Poisgpd}

{In this section we show that almost regular Poisson manifolds -- which  in general are not integrated by a symplectic groupoid --  come together with   well-behaved  Lie groupoids, that among other things allow to desingularize them. This is not surprising, since these Poisson manifolds are exactly those whose associated singular foliation is projective, and to projective singular foliations one associates an adjoint Lie groupoid as in Prop. \ref{prop:adj}. More precisely, in \S \ref{subsec:Poisgr} to an almost regular Poisson manifold we associate two Poisson groupoids in duality, both of which are adjoint groupoids, and one of which has a regular Poisson structure. In \S \ref{subsec:duality} we show that these two Poisson groupoids are related by a morphism. An interesting question raised by Y. Kosmann-Schwarzbach is whether each of these groupoids acts on the other one by dressing transformations.}

\subsection{Lie bialgebroids}\label{sec:Liebi}

We recall some standard material on Lie bialgebroids and Poisson groupoids, based on the work of Mackenzie-Xu and Weinstein. We refer the reader to the monograph \cite{Ping} for a review of Poisson groupoids from a modern perspective.

A \emph{Lie bialgebroid} \cite[\S 3]{MacXuBiPois}
 is a pair $(A,A^*)$ where
 $A$ is a Lie algebroid with the property that its dual $A^*$ is also
a Lie algebroid, and both structures are compatible in the following sense:
$$d_{A^*}[X,Y]=[d_{A^*}X,Y]+[X,d_{A^*}Y],$$
where $d_{A^*}\colon \Gamma(\wedge A)\to \Gamma(\wedge A)$
is the differential associated to the Lie algebroid $A^*$,  $X,Y\in \Gamma(A)$, and the square bracket denotes the extension to $\Gamma(\wedge A)$ of the Lie algebroid bracket of $A$.

A  \emph{Poisson groupoid}  \cite[\S 4.2]{alancoiso} is a Lie groupoid $G\rightrightarrows M$ 
endowed with a Poisson structure $\Pi$ such that the graph of the multiplication map is a coisotropic submanifold of $(G,\Pi)\times (G,\Pi)\times (G,-\Pi)$. In that case the identity section $M$ is coisotropic in $(G,\Pi)$. Therefore its conormal bundle $N^*M:=\{\xi \in (T^*G)|_M: \xi|_{TM}=0\}$ is a Lie subalgebroid of $T^*G$, and the canonical isomorphism of Lie algebroids $N^*M\cong [\ker(d\bs)|_M]^*$ 
makes the latter into a Lie algebroid. One can show that the pair $(\ker(d\bs)|_M,[\ker(d\bs)|_M]^*)$ is a Lie bialgebroid, see \cite[Thm. 8.3]{MacXuBiPois}.

Conversely, let $(A,A^*)$ be a  Lie bialgebroid.
If $A$ is integrable, it is known that its $\bs$-simply connected Lie groupoid admits a unique Poisson structure making it a Poisson groupoid that integrates the Lie bialgebroid $(A,A^*)$ \cite[Thm 4.1]{maxu}. In general this Poisson structure does not descend to the other Lie groupoids integrating $A$, which are all quotients of the $\bs$-simply connected one.

A special kind of Lie bialgebroid was introduced in  \cite[\S 4]{MacXuBiPois}.
Let $A$ be a Lie algebroid, and $\Lambda$ a section of $\wedge^2A$ such that $[\Lambda,\Lambda]=0$. The section 
$\Lambda$ induces on the dual vector bundle $A^*$ the structure of a Lie algebroid, and the pair $(A,A^*)$ is a Lie bialgebroid. Further, the map $A^*\to A$ obtained contracting with $\Lambda$ is a Lie algebroid morphism. A typical example of this is obtained setting $A=TM$ for $M$ a Poisson manifold. Lie algebroids that arise as above are called \emph{triangular Lie bialgebroids} and $\Lambda$ is called $r$-matrix.
In the above setting, for any (not necessarily $\bs$-simply connected) Lie groupoid integrating $A$, the formula $\overset{\leftarrow}{\Lambda}-\overset{\rightarrow}{\Lambda}$ defines a Poisson groupoid structure integrating the triangular Lie bialgebroid $(A,A^*)$ \cite[Thm. 3.1]{LiuXuExactPoisGr}. (Here $\overset{\leftarrow}{\Lambda}$ denotes the left-invariant bivector field obtained from $\Lambda$, and 
$\overset{\rightarrow}{\Lambda}$ the right-invariant one.)
The pair $(A^*,A)$ is also a Lie bialgebroid, but in general it is not triangular.

\subsection{The Lie bialgebroid associated with an almost regular Poisson structure}

Let $(M,\pi)$ be an almost regular Poisson manifold.
Clearly the involutive distribution $D$ introduced in \S \ref{sec:almregmfd} is a Lie algebroid over $M$.
Since each leaf $P$ of $D$ is endowed with the Poisson bivector field $\pi_P$ obtained by restriction  which satisfies $[\pi_P,\pi_P]=0$,  $D^*$ is also a Lie algebroid  and $$(D,D^*)$$ a triangular Lie bialgebroid (see \S \ref{sec:Liebi}). This simple observation is the starting point of this whole section.

 We remind that by Remark \ref{rem:Dstar} and Notation \ref{not:algdcF} 
 there is a canonical isomorphism of Lie algebroids  
\begin{equation}\label{eq:dfx}
D^*\cong A^{\cF} 
\end{equation} 
induced by the contraction with $\pi$, 
where $A^{\cF}$   was introduced in   Lemma \ref{lem:algcF} as an   almost injective Lie algebroid inducing $\cF$.

\begin{ex}
Consider $M=\RR^2$ with the almost regular Poisson structure $\pi=q\partial_{q}\wedge \partial_{p}$. We have $D=TM$. 
At   $x=(q,p)\in \RR^2$, the isomorphism \eqref{eq:dfx} is $T_x^*M\to A^{\cF}_{x}$  
 given by $dq\mapsto [q\partial_{p}]$ and $d{p}\mapsto [-q\partial_{q}]$. The $r$-matrix $\pi \in \wedge^2\Gamma(TM)$ can be viewed as a bilinear form on $T_x^*M$, which maps $(dq,dp)$ to $q$. Viewed as a bilinear form on 
$A^{\cF}_{x}$ using the above isomorphism, it maps $([X],[Y])$ to $\omega(X,Y)|_x$ where  $X,Y\in \cF$ and $\omega$ is the symplectic form on the symplectic leaf through $x$. (So $\omega=0$ if 
$q=0$, and $\omega=\frac{1}{q}dq\wedge dp$ otherwise.)
\end{ex}

\subsection{\texorpdfstring{$H(D)$}{H(D)} and \texorpdfstring{$H(\cF)$}{H(F)} as Poisson groupoids}\label{subsec:Poisgr}

Let  $(M,\pi)$ be almost regular Poisson manifold. The material recalled in \S \ref{sec:Liebi} immediately implies:
\begin{prop}\label{prop:HD}
The holonomy groupoid $H(D)$ of the regular foliation associated to $D$, together with the bivector field 
$\overset{\leftarrow}{\pi}-\overset{\rightarrow}{\pi}$,  
is a Poisson groupoid integrating the Lie bialgebroid $(D,D^*)$.
\end{prop}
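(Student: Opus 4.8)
The plan is to verify that all the hypotheses required by the general theory recalled in \S\ref{sec:Liebi} are met, so that the desired Poisson groupoid structure follows formally. The starting point is the observation made just before the statement: since $(M,\pi)$ is almost regular, the distribution $D$ of Thm.~\ref{thm:aregpois} is an involutive (constant-rank) distribution, hence a Lie algebroid over $M$ with anchor the inclusion $D\hookrightarrow TM$. Moreover each leaf $P$ of $D$ is a Poisson submanifold (Thm.~\ref{thm:aregpois}), so the restriction $\pi_P$ is a well-defined Poisson bivector, and these assemble into a section $\Lambda\in\Gamma(\wedge^2 D)$ satisfying $[\Lambda,\Lambda]=0$ with respect to the Lie algebroid bracket of $D$. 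Concretely, $\Lambda$ is just $\pi$ regarded as a section of $\wedge^2 D\subset\wedge^2 TM$, which is legitimate because $\mathrm{Im}(\rho)\subset D$ everywhere (on $M_{\mathrm{reg}}$ by condition ii) of Thm.~\ref{thm:aregpois}, and on all of $M$ by density and continuity).

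With $(D,\Lambda)$ a Lie algebroid equipped with an $r$-matrix, I would then invoke the theory of triangular Lie bialgebroids from \cite[\S 4]{MacXuBiPois}: the $r$-matrix $\Lambda$ induces a Lie algebroid structure on the dual bundle $D^*$, and the pair $(D,D^*)$ is a Lie bialgebroid. This is exactly the bialgebroid identified in \S\ref{sec:Liebi} and in the discussion preceding the proposition. Next, since $D$ is an \emph{involutive distribution}, it is an integrable Lie algebroid, and the holonomy groupoid $H(D)$ of the regular foliation integrating $D$ is a Lie groupoid integrating it. The final ingredient is the result of Liu--Xu \cite[Thm.~3.1]{LiuXuExactPoisGr}, recalled verbatim in \S\ref{sec:Liebi}: for \emph{any} Lie groupoid integrating a Lie algebroid $A$ carrying an $r$-matrix $\Lambda$, the bivector field $\overset{\leftarrow}{\Lambda}-\overset{\rightarrow}{\Lambda}$ defines a Poisson groupoid structure integrating the triangular Lie bialgebroid $(A,A^*)$. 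Applying this with $A=D$, $\Lambda=\pi$, and the Lie groupoid $H(D)$ yields precisely the claimed Poisson groupoid $(H(D),\overset{\leftarrow}{\pi}-\overset{\rightarrow}{\pi})$ integrating $(D,D^*)$.

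The proof is therefore essentially a matter of assembling cited results, and there is no substantial obstacle once the setup is correct. The one point that deserves genuine care — and which I expect to be the main (modest) subtlety — is the justification that $\pi$ really is a section of $\wedge^2 D$ and that $[\pi,\pi]=0$ holds as an identity in the \emph{Lie algebroid} $D$ rather than merely in $TM$. Since the anchor of $D$ is the inclusion into $TM$, the Schouten bracket on $\Gamma(\wedge^\bullet D)$ is compatible with that on $\Gamma(\wedge^\bullet TM)$, so the vanishing $[\pi,\pi]=0$ in $TM$ transfers immediately to $D$; and the fact that $\pi\in\Gamma(\wedge^2 D)$ follows from $\mathrm{Im}(\rho)=\sharp(T^*M)\subset D$, which is condition ii) of Thm.~\ref{thm:aregpois} on the dense set $M_{\mathrm{reg}}$ extended by continuity. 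Once these two facts are in place, the proposition is an immediate consequence of \cite[Thm.~3.1]{LiuXuExactPoisGr}.
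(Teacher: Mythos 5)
Your proposal is correct and takes essentially the same route as the paper: there the proposition is stated as an immediate consequence of the triangular Lie bialgebroid $(D,D^*)$ with $r$-matrix $\pi$ (established in the subsection preceding the statement, using that the leaves of $D$ are Poisson submanifolds so that $\pi\in\Gamma(\wedge^2 D)$ with $[\pi,\pi]=0$) together with \cite[Thm.~3.1]{LiuXuExactPoisGr} as recalled in \S\ref{sec:Liebi}, applied to the Lie groupoid $H(D)$ integrating $D$. The only difference is that you spell out the density/continuity argument for $\pi\in\Gamma(\wedge^2 D)$ and the compatibility of the Schouten brackets, points the paper leaves implicit.
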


The above is a statement about  the  Lie bialgebroid $(D,D^*)$.
In the rest of this subsection we will consider {its ``dual'', namely} the Lie bialgebroid $(D^*, D)$.
Since the singular foliation $\cF$ is   projective, 
its holonomy groupoid $H(\cF)$ is a Lie groupoid that integrates the  almost injective Lie algebroid $D^*$, by eq. \eqref{eq:dfx} and \S \ref{subsec:holgr}.

\begin{thm}\label{thm:Poisgr}
Let $(M,\pi)$ be an almost regular Poisson manifold.
Let  $\cF$ be the corresponding singular foliation.
Then there is a canonical Poisson structure $\Pi$ on
the holonomy groupoid $H(\cF)$ making it a 
Poisson groupoid for the Lie bialgebroid $(D^*,D)$.
\end{thm}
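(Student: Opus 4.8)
The plan is to integrate the Lie bialgebroid $(D^{*},D)$ and then descend the resulting Poisson structure to the adjoint groupoid $H(\cF)$. First I would record that $(D^{*},D)$ is a Lie bialgebroid: it is the dual of the triangular Lie bialgebroid $(D,D^{*})$ recalled in \S\ref{sec:Liebi}, and the dual of a Lie bialgebroid is again one. By \eqref{eq:dfx} the Lie algebroid $D^{*}$ is isomorphic to $A^{\cF}$, hence integrable; let $\tG$ denote its source--simply--connected integration. By Mackenzie--Xu \cite[Thm.\ 4.1]{maxu}, $\tG$ carries a \emph{unique} Poisson structure $\tilde{\Pi}$ for which $(\tG,\tilde{\Pi})$ is a Poisson groupoid integrating $(D^{*},D)$. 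The whole content of the theorem is therefore to transport $\tilde{\Pi}$ to $H(\cF)$, which by Prop.\ \ref{lem:explicit} (applied to $\Gamma=\tG$) is the quotient $\tG/I$ by the $\bs$--discrete normal subgroupoid $I$ of \eqref{eq:I}.

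Because $\tG$ and $H(\cF)$ integrate the same algebroid $D^{*}$ and $I$ is $\bs$--discrete, the projection $q\colon\tG\to H(\cF)$ is \'etale and its fibres are the orbits of translation by $I$. Thus $\tilde{\Pi}$ descends along $q$ precisely when it is invariant under these translations. Here I would use the multiplicativity of $\tilde{\Pi}$: for $n\in I_{\bt(g)}$ one has, schematically, $\tilde{\Pi}(ng)=(R_{g})_{*}\tilde{\Pi}(n)+(L_{n})_{*}\tilde{\Pi}(g)$, and since $q\circ L_{n}=q$ this yields $q_{*}\tilde{\Pi}(ng)=q_{*}\tilde{\Pi}(g)$ as soon as $\tilde{\Pi}$ vanishes at the points of $I$ (a possible unit--correction term vanishes for the same reason). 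The descent is hence reduced to the single claim
\[
\tilde{\Pi}|_{I}=0.
\]

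Proving this vanishing is the heart of the matter, and is where Poisson geometry must be used: the warning in \S\ref{sec:Liebi} shows that for a general Lie bialgebroid the Poisson structure does \emph{not} descend, so some special input is indispensable. Over $M_{\mathrm{reg}}$ there is nothing to check, since by Prop.\ \ref{prop:essisotrsing} the isotropy $\A^{\cF}_{x}=\g_{x}/\h_{x}$ is trivial there, so $I$ reduces to the units and $q$ is already an isomorphism. Over the singular locus $I_{x}$ is a discrete subgroup of the isotropy group $\tG_{x}$, and $\tilde{\Pi}$ restricts to $\tG_{x}$ as a Poisson--Lie structure integrating a Lie bialgebra on $\A^{\cF}_{x}$ whose cobracket is governed by the leafwise (foliated de Rham) differential $d_{D}$ of the dual algebroid. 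I would show that this cobracket, and in fact the whole Poisson--Lie structure, is trivial, tracing the vanishing to the centrality $\h_{x}\subset Z(\g_{x})$ established in Prop.\ \ref{lem:center}. The delicate point, and the main obstacle, is that the isotropy groups $\tG_{x}$ need not be simply connected (their fundamental groups are controlled by $\pi_{2}$ of the leaves), so triviality of the cobracket is not by itself enough: one must verify the vanishing of $\tilde{\Pi}$ at the actual points of the discrete subgroup $I$, and not merely to first order along the units.

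Finally I would dispatch the two remaining assertions. Canonicity of $\Pi$ follows from the uniqueness clause of \cite[Thm.\ 4.1]{maxu} together with the uniqueness of the \'etale descent. For regularity it suffices to exhibit the rank of $\Pi$ and check that it is constant: over the dense open set $H(\cF)|_{M_{\mathrm{reg}}}$ the groupoid is, leaf by leaf, the pair groupoid $P\times P$ of a symplectic leaf $P$, and there $\Pi$ is the standard symplectic--groupoid structure $\pi\oplus(-\pi)$, of rank $2\,\mathrm{rk}(D)$; since the dual algebroid $D$ has constant rank and injective anchor, the characteristic distribution of $\Pi$ retains this same rank across the singular locus, so $\Pi$ is regular.
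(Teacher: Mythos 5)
Your high-level plan coincides with the paper's (integrate $(D^*,D)$ on the $\bs$-simply connected groupoid $\Gamma$ via Mackenzie--Xu, then push the Poisson structure down through the discrete quotient of Prop.~\ref{lem:explicit}), but the descent step --- which you correctly identify as the whole content of the theorem --- contains a genuine error and, by your own admission, an unfinished core. The reduction to the claim $\tilde\Pi|_I=0$ cannot be correct: $I$ as in \eqref{eq:I} is a \emph{wide} subgroupoid, containing the entire unit section, and for any Poisson groupoid the target map is a Poisson map onto $(M,\pi)$, so $\bt_*\tilde\Pi_{1_x}=\pi_x\neq 0$ wherever $\pi$ does not vanish --- on all of $M_{\mathrm{reg}}$, and also at many singular points (on the exceptional hypersurface of a log-symplectic manifold, for instance). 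So $\tilde\Pi$ is nonzero along $I$ except in degenerate cases. What the affine property of a multiplicative bivector actually demands is not vanishing but \emph{invariance}: from $\tilde\Pi_{ng}=L_{n*}\tilde\Pi_g+R_{g*}\tilde\Pi_n-L_{n*}R_{g*}\tilde\Pi_{1_{\bt(g)}}$ one sees that descent needs $\tilde\Pi_n=L_{n*}\tilde\Pi_{1_{\bt(n)}}$ for $n\in I$, i.e.\ that left translation by the unique local $I$-bisection through $n$ preserves $\tilde\Pi$. Two further slips compound this: your claim that over $M_{\mathrm{reg}}$ ``there is nothing to check'' is false, since $\A^{\cF}_x=0$ (Prop.~\ref{prop:essisotrsing}) only makes the isotropy of $\Gamma$ \emph{discrete}, not trivial --- $\Gamma$ has the universal covers of holonomy covers as source-fibers, so $q$ is generally a nontrivial covering even over $M_{\mathrm{reg}}$; and the isotropy group $\Gamma_x=\bs^{-1}(x)\cap\bt^{-1}(x)$ is an intersection of coisotropic submanifolds, so your assertion that $\tilde\Pi$ restricts to it as a Poisson--Lie structure is unjustified. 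Finally, you explicitly concede that you cannot verify the behaviour of $\tilde\Pi$ at the actual points of $I$ (the non-simply-connected isotropy being the obstacle you flag), so even after correcting the reduction the heart of the proof is missing.

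The paper closes exactly this gap by an elementary mechanism you did not use. It describes the quotient relation by local maps $\tau$ with $\phi\circ\tau=\phi$, where $\phi=(\bt,\bs)\colon\Gamma\to (M,\pi)\times(M,-\pi)$, and invokes two facts: (i) $\phi$ is a Poisson map for any Poisson groupoid; (ii) over the open dense set $N:=\bs^{-1}(M_{\mathrm{reg}})$ the differential $d\phi$ is \emph{injective}, because almost regularity forces $\Gamma|_{M_{\mathrm{reg}}}$ to have discrete isotropy ($H(\cF)|_{M_{\mathrm{reg}}}$ is the holonomy groupoid of a regular foliation, and $\Gamma$ is a discrete extension of $H(\cF)$). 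Then $\tau_*\tilde\Pi_z$ and $\tilde\Pi_{\tau(z)}$ have the same image under the injective map $(d\phi)_{\tau(z)}$, hence agree on $U\cap N$, hence on all of $U$ by continuity: this is precisely the invariance your approach required, obtained with no analysis of $\tilde\Pi$ at points of $I$ and no Poisson--Lie theory on isotropy groups. (Your closing regularity paragraph also proves more than the theorem states and is itself gapped --- the rank of a Poisson bivector is only lower semicontinuous, so it does not ``retain its rank across the singular locus'' for free; the paper establishes regularity separately in Cor.~\ref{cor:desing} by showing each restriction $H(\cF)|_P$ is a symplectic leaf, using the uniqueness clause of Mackenzie--Xu leafwise.)
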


\begin{remark}
 Since $H(\cF)$ is not $\bs$-simply connected in general,  we can not make use  of   \cite[Thm 4.1]{maxu} to obtain the above result.  On the contrary, $H(\cF)$ is the adjoint groupoid integrating the almost injective Lie algebroid $D^*$ associated to $\cF$, as stated in \S \ref{subsec:holgr}. It follows that \emph{any} Lie groupoid integrating the Lie algebroid $D^*$ is a Poisson groupoid integrating $(D^*,D)$.
\end{remark}

\begin{proof}
Let $\Gamma \rightrightarrows M$ be the $\bs$-simply connected Lie groupoid 
integrating the almost injective Lie algebroid $D^*$ (it is obtained taking the 
universal covers of the source-fibers of $H(\cF)$). The isomorphism of Lie algebroids \eqref{eq:dfx} and
Prop \ref{lem:explicit} 
 imply that $H(\cF)=\Gamma/\sim_{\Gamma}$. The equivalence relation $\sim_{\Gamma}$  introduced in Prop. \ref{lem:explicit} can be alternatively described as follows\footnote{This follows from \cite[Cor. 2.11 b)]{AndrSk} and \cite[Ex. 3.4(4)]{AndrSk}.}: 
$$y_1\sim_{\Gamma}y_2 
\Leftrightarrow \text{$\exists$ neighbourhood $U$ of $y_1$  and  $\tau \colon U\to \Gamma$ satisfying $\phi\circ \tau=\phi$ and $\tau(y_1)=y_2$}
$$
where $\phi:=(\bt,\bs)\colon \Gamma\to M\times M$ is the   target-source map. We denote by $p\colon \Gamma\to \Gamma/\sim_{\Gamma}=H(\cF)$ the canonical projection.

Since $\Gamma$ is $\bs$-simply connected and integrates the Lie algebroid $D^*$, it admits a unique Poisson structure $\Pi$ making it a Poisson groupoid integrating the Lie bialgebroid $(D^*,D)$ by \cite[Thm 4.1]{maxu}.

{\bf Claim:} \emph{The Poisson structure $\Pi$ on $\Gamma$ descends to a well-defined Poisson structure on    $\Gamma/\sim_{\Gamma}$.} 

Let $y_1,y_2\in \Gamma$ such that $y_1\sim_{\Gamma} y_2$. {We need to show that 
$p_*(\Pi_{y_1})=p_*(\Pi_{y_2})$.}
Let the neighbourhood $U$ of $y_1$  and the smooth map $\tau \colon U\to \Gamma$ be as in the above description of the equivalence relation $\sim_{\Gamma}$. 
Notice that $p\circ \tau =p$, as a consequence of $\phi\circ \tau=\phi$ and the description of $\sim_{\Gamma}$. Hence it suffices to show that $\tau_*(\Pi_{y_1})=\Pi_{y_2}$.

 The situation is summarized by the following commutative diagram:
\begin{equation*}
\xymatrix{
&\Gamma/\sim_{\Gamma}&\\
U \ar[rd]_{\phi} \ar[ru]^{p} \ar[rr]^{\tau} & & \tau(U)\ar[ld]^{\phi}\ar[lu]_{p} \\
&M\times M&\\
}
 \end{equation*} 

We will use the fact that, as for every Poisson groupoid,
$\phi=(\bt,\bs)\colon \Gamma\to M\times M$
is a Poisson map
\cite[Thm. 4.2.3]{alancoiso}, where the Poisson structure on the codomain is the product  $(M,\pi)\times(M,-\pi)$.

Denote by $M_{\mathrm{reg}}$ the open dense subset of $M$ consisting of regular points of 
{the singular foliation $\cF$ induced by $\pi$.} 
$N:=\bs^{-1}(M_{\mathrm{reg}})=\bt^{-1}(M_{\mathrm{reg}})$ is an open dense subset of $\Gamma$,
as the preimage  of $M_{\mathrm{reg}}$ under a surjective submersion.  
For all $z\in U\cap N$, notice that $\tau(z)\in N$, since $\phi\circ \tau=\phi$.  The derivative
$$(\phi_*)_{\tau(z)}\colon T_{\tau(z)}\Gamma\to T_{\phi(z)}(M\times M)$$
is injective. Indeed its kernel $\ker(\bt_*)_{\tau(z)}\cap \ker(\bs_*)_{\tau(z)}$ is trivial since $\Gamma|_{M_{\mathrm{reg}}}$ has discrete isotropy groups, which in turn is due to two facts: {i)} $H(\cF)$ is a discrete quotient of $\Gamma$ and {ii)}
$H(\cF)|_{M_{\mathrm{reg}}}=H(\cF|_{M_{\mathrm{reg}}})$,  being the holonomy groupoid of a regular foliation, has discrete isotropy groups. 
This injectivity implies that $\tau_*(\Pi_{z})=\Pi_{\tau(z)}$, since both sides map to the same bivector under $(\phi_*)_{\tau(z)}$ (recall that $\phi$ is a Poisson map and  $\phi\circ \tau=\phi$).
Since $U\cap N$ is dense in $U$, by continuity the same holds for all points $z\in U$, in particular we have $\tau_*(\Pi_{y_1})=\Pi_{y_2}$.
This proves the claim. \hfill$\bigtriangleup$

Since the fibers of $p$ are discrete, the fact that $(\Gamma,\Pi)$ is a Poisson groupoid integrating $(D^*,D)$ implies that the same is true for $ \Gamma/\sim_{\Gamma}$ with the induced Poisson structure. 
\end{proof}

 For every leaf $P$ of $D$, the Lie algebroid of
 $H(\cF)|_P$ is $D^*\cong T^*P$. The following results is no surprise, and justifies why the Poisson groupoid $H(\cF)$ can be regarded as a ``desingularization'' of the almost regular Poisson manifold $(M,\pi)$:

\begin{cor}\label{cor:desing} For every leaf $P$ of $D$, the restriction 
$H(\cF)|_P$ is a symplectic leaf of the Poisson structure $\Pi$ on $H(\cF)$, which hence is a regular Poisson structure. Further  $H(\cF)|_P$
is a symplectic groupoid for $(P,\pi_P)$.
\end{cor}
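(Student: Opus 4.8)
The plan is to identify the symplectic leaves of $(H(\cF),\Pi)$ with the restrictions $H(\cF)|_P$, the key being to recognize each such restriction as a (local-diffeomorphic) quotient of the canonical symplectic groupoid of $(P,\pi_P)$. Throughout I would use Thm.~\ref{thm:Poisgr} (which produces $\Pi$ from the $\bs$-simply connected integration $\Gamma$ of $D^*$) together with the fact, recalled in its proof, that $\phi=(\bt,\bs)\colon (H(\cF),\Pi)\to (M,\pi)\times(M,-\pi)$ is a Poisson map \cite[Thm.~4.2.3]{alancoiso}.

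First I would show that each $H(\cF)|_P$ is a Poisson submanifold. Since $\bt\colon(H(\cF),\Pi)\to(M,\pi)$ is Poisson, it carries every symplectic leaf of $\Pi$ into a single symplectic leaf of $(M,\pi)$; by Thm.~\ref{thm:aregpois} the latter is contained in a leaf $P$ of $D$. As the two endpoints of any arrow lie in the same orbit, hence in the same leaf of $D$, one has $\bt^{-1}(P)=\bs^{-1}(P)=H(\cF)|_P$, so every symplectic leaf $S$ of $\Pi$ meeting $H(\cF)|_P$ is in fact contained in it. Thus $H(\cF)|_P$ is a union of symplectic leaves, i.e. a Poisson submanifold, and $\Pi$ restricts to it; it is moreover a Poisson subgroupoid integrating the restricted Lie bialgebroid, which by $D|_P=TP$ and $D^*|_P\cong T^*P$ is precisely $(T^*P,TP)$.

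The main point is then to prove that $\Pi|_{H(\cF)|_P}$ is nondegenerate. For this I would pass to $\Gamma$: by the argument of the previous paragraph $\Gamma|_P$ is a Poisson submanifold of $(\Gamma,\Pi)$ integrating $(T^*P,TP)$. Crucially, every $\bs$-fibre of $\Gamma$ over a point of $P$ already maps under $\bt$ into the orbit through that point, hence into $P$, so the $\bs$-fibres of $\Gamma|_P$ coincide with those of $\Gamma$ and $\Gamma|_P$ is again $\bs$-simply connected. Therefore $\Gamma|_P$ is the canonical $\bs$-simply connected symplectic groupoid of $(P,\pi_P)$, and by the uniqueness of the Poisson groupoid structure on an $\bs$-simply connected integration \cite[Thm.~4.1]{maxu} the restriction of $\Pi$ to $\Gamma|_P$ is exactly this symplectic structure, in particular nondegenerate. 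Since $H(\cF)|_P=\Gamma|_P/\!\sim_{\Gamma}$ is obtained through the projection $p$, which is a local diffeomorphism (discrete fibres), the descended structure $\Pi|_{H(\cF)|_P}$ is again nondegenerate; hence $(H(\cF)|_P,\Pi|_{H(\cF)|_P})$ is a symplectic groupoid for $(P,\pi_P)$.

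Finally I would combine the two steps. The leaf $P$ is connected and the source-fibres are connected, so $H(\cF)|_P$ is a connected Poisson submanifold on which $\Pi$ is nondegenerate, hence a single symplectic leaf of $(H(\cF),\Pi)$. Since $D$ has constant rank, each such leaf has dimension $2\,\mathrm{rk}(D)$, so the rank of $\Pi$ is constant over $H(\cF)$ and $\Pi$ is regular. The hard part is exactly the nondegeneracy: the delicate point is to verify that restricting the Mackenzie--Xu structure $\Pi$ to the subgroupoid over the Poisson submanifold $P$ reproduces the Mackenzie--Xu structure of the restricted bialgebroid $(T^*P,TP)$, and this is what the $\bs$-simply connectedness of $\Gamma|_P$ together with the uniqueness statement is designed to ensure.
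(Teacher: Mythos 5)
Your architecture coincides with the paper's (pass to the $\bs$-simply connected integration $\Gamma$, show the restriction over $P$ is a Poisson submanifold, note that $\Gamma|_P$ is itself $\bs$-simply connected, invoke the uniqueness in \cite[Thm 4.1]{maxu} to identify $\Pi|_{\Gamma|_P}$ with the symplectic structure of the canonical symplectic groupoid of $(P,\pi_P)$, and descend along the discrete quotient $p$), but the step on which everything rests -- that $H(\cF)|_P$, resp.\ $\Gamma|_P$, is a Poisson submanifold -- is justified by the assertion that a Poisson map carries each symplectic leaf into a single symplectic leaf of the target. This assertion is false: for a Poisson map $f$ one only has the inclusion $df(T_xS)\supseteq T_{f(x)}L$ (Hamiltonian vector fields $X_{f^*h}$ are $f$-related to $X_h$), not the reverse. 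For instance $(q,p)\mapsto q$ from $\RR^2$ with the canonical symplectic structure to $\RR$ with the zero Poisson structure is a Poisson map sending the unique leaf $\RR^2$ onto a union of infinitely many point leaves. Worse, the principle fails in exactly the situation of the corollary: in the log-symplectic case $D=TM$, $P=M$, and the corollary itself asserts that $H(\cF)=H(\cF)|_M$ is a \emph{single} symplectic leaf, whose image under $\bt$ is all of $M$ -- a union of many symplectic leaves of $(M,\pi)$. So your first paragraph, and with it the claim in your second paragraph that $\Gamma|_P$ is a Poisson submanifold, has a genuine hole.

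The paper fills this hole with a computation your proposal lacks. For $y\in\Gamma|_P$ and $\xi\in[T_{\bt(y)}P]^{\circ}$ one has $d_y\bt\circ\Pi_y^{\sharp}\circ(d_y\bt)^*\xi=\pi_{\bt(y)}^{\sharp}\xi=0$, because $\bt$ is a Poisson map \cite[Thm. 4.2.3]{alancoiso} and $P$ is a Poisson submanifold of $(M,\pi)$; similarly for $\bs$. Hence $d_y\phi\circ\Pi_y^{\sharp}$ kills the conormal $[T_y(\Gamma|_P)]^{\circ}$, where $\phi=(\bt,\bs)$. The essential extra ingredient is that $d\phi$ is injective on the open dense subset $N=\bs^{-1}(M_{\mathrm{reg}})$ (this uses that $\Gamma\to H(\cF)$ is a discrete quotient and that $H(\cF)|_{M_{\mathrm{reg}}}$ has discrete isotropy); on $\Gamma|_P\cap N$ it follows that $\Pi^{\sharp}$ annihilates $[T(\Gamma|_P)]^{\circ}$, and density plus continuity extend this to all of $\Gamma|_P$. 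Alternatively, one can repair your leaf argument along the same lines: on $N$ the map $(d\phi)^*$ is surjective, so every covector has the form $(d\bt)^*\xi+(d\bs)^*\eta$ and $d\bt\,\Pi^{\sharp}\bigl((d\bt)^*\xi+(d\bs)^*\eta\bigr)=\pi^{\sharp}\xi\in D$; since $D$ is a closed subbundle, density gives $d\bt\circ\Pi^{\sharp}$ valued in $D$ everywhere, whence leaves of $\Pi$ project into leaves of $D$ -- the correct replacement for your false claim. Either way, the almost-injectivity/density argument is indispensable and is precisely what is missing. Your remaining steps (the $\bs$-fibres of $\Gamma|_P$ coincide with those of $\Gamma$, the Mackenzie--Xu uniqueness, the descent along the local diffeomorphism $p$, and the constancy of the leaf dimension $2\,\mathrm{rk}(D)$) agree with the paper and are correct, modulo also justifying, as the paper does in a footnote, that $(\Gamma|_P,\Pi|_{\Gamma|_P})$ is a Poisson groupoid: its multiplication graph is the intersection of the multiplication graph of $\Gamma$ with the Poisson submanifold $\Gamma|_P\times\Gamma|_P\times\Gamma|_P$, hence coisotropic.
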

\begin{remark}
On the other hand, the restriction of $H(D)$ to each leaf $P$ of $D$ is not a symplectic groupoid in general, as is evident in the case of Poisson structures which have full rank on an open dense subset.
\end{remark}
\begin{proof}
Let    $\Gamma \rightrightarrows M$ be the $\bs$-simply connected Lie groupoid obtained taking the universal covers of the source-fibers of $H(\cF)$. In the proof of Thm. \ref{thm:Poisgr} we saw that the Poisson structure on $H(\cF)$ is obtained by quotienting a Poisson structure $\Pi$ making $(\Gamma,\Pi)$ a Poisson groupoid integrating the Lie bialgebroid $(D^*,D)$, hence we may work on $\Gamma$ instead than on $H(\cF)$.

The unique Poisson structure on $M$ such that the target map  $\bt\colon \Gamma \to M$ is a Poisson map \cite[Thm. 4.2.3]{alancoiso} is  $\pi$,  since up to a sign it agrees with the one induced by the Poisson groupoid structure on $H(D)$   integrating the Lie algebroid $(D,D^*)$ \cite[Thm. 4.4.4]{alancoiso}. 

{\bf Claim:} {\it For each leaf $P$ of $D$,  the restricted groupoid $\Gamma|_P:=\bs^{-1}(P)=\bt^{-1}(P)$ is a Poisson submanifold of $(\Gamma, \Pi)$.}

{For any $y\in \Gamma|_P$ we have linear isomorphisms $[T_{\bt(y)}P]^{\circ}\cong [T_y(\Gamma|_P)]^{\circ}\cong [T_{\bs(y)}P]^{\circ}$ induced by  the submersions  $\bt$ and $\bs$, where the annihilators are taken in $T_{\bt(y)}M$, $T_y\Gamma$ and $T_{\bs(y)}M$ respectively. We have 
\begin{equation*}
d_y\bt\circ \Pi_y^{\sharp}\circ(d_y\bt)^*\xi=\pi_{\bt(y)}^{\sharp}\xi=0\;\;\;\;\; \text{ for all } \xi \in [T_{\bt(y)}P]^{\circ},
\end{equation*} since $\bt$ is a Poisson map and $P$ a Poisson submanifold. The analog equation holds for $\bs$. As a consequence, the restriction to ${[T_y(\Gamma|_P)]^{\circ}}$ of 
$d_y\phi\circ \Pi_y^{\sharp}$ vanishes identically, where $\phi:=(\bt,\bs)\colon \Gamma\to M\times M$.  In the proof of Thm. \ref{thm:Poisgr} we saw that there is an open dense subset $N\subset \Gamma$ where $d\phi$ is injective.
In the case that $y$ lies in $N$,
it follows that $\Pi_y^{\sharp}$ annihilates ${[T_y(\Gamma|_P)]^{\circ}}$. Since $N$ is a dense subset of $\Gamma$, this holds at all $y\in \Gamma|_P$, proving the claim. \hfill$\bigtriangleup$
}

$\Gamma|_P$, with the restriction $\Pi|_{\Gamma|_P}$, is a Poisson groupoid\footnote{Indeed, the graph of the multiplication of $\Gamma|_P$
is the intersection of the graph of the multiplication of $\Gamma$
with the Poisson submanifold $\Gamma|_P\times \Gamma|_P\times \Gamma|_P$, and therefore it is coisotropic in the latter.}
integrating the Lie bialgebroid $(D|_P,D^*|_P)=(T^*P,TP)$.
 By the uniqueness statement in \cite[Thm 4.1]{maxu}, $\Pi|_{\Gamma|_P}$ agrees with the inverse of the symplectic form making $\Gamma|_P$ the $\bs$-simply connected  symplectic groupoid integrating $T^*P$.
\end{proof}
 
\begin{remark}
Given any Poisson manifold $(M,\pi)$ (not necessarily almost regular), choose a regular foliation of $M$ consisting of Poisson submanifolds. If we
 denote by $D$ the distribution tangent to the chosen regular foliation, then $(D,D^*)$ is a Lie bialgebroid. In general $D^*$ is not an integrable Lie algebroid. When it is,  Lie groupoids integrating it and which are not $\bs$-simply connected generally do not admit Poisson groupoid structures integrating the Lie bialgebroid $(D^*,D)$. This explains the relevance of Thm. \ref{thm:Poisgr}, which holds in the almost regular case.

We present a well-known example of the above (see for instance \cite[Ex. 5.16]{LecturesIntegrabilty}). 
Endow any manifold $M$ with the trivial Poisson structure $\pi=0$. As a regular foliation on $M$ take the one-leaf foliation, so $D^*=T^*M$ with the trivial Lie algebroid structure. The $\bs$-simply connected symplectic groupoid integrating $(M,\pi)$ is $T^*M$ endowed with the canonical symplectic form $\Omega$ and with groupoid composition being addition along the fibers. Now specialize to  $M=S^3$, the 3-sphere. Then $T^*S^3\cong S^3\times \RR^3$, and quotienting by the bundle of lattices $S^3\times \ZZ^3$ we obtain another Lie groupoid integrating the cotangent bundle Lie algebroid, namely $G=S^3\times \T^3$ where $\T^3$ is the 3-torus. The symplectic form $\Omega$ however does \emph{not} descend to $G$. Indeed, $G$ does not admit any symplectic structure at all. To see this, suppose $G$   admitted a symplectic form $\omega$, defining a cohomology class $[\omega]\in H^2(G)\cong
H^0(S^3)\otimes H^2(\T^3)$ (we used the K\"unneth formula here). We have $[\omega]^2
\in H^0(S^3)\otimes H^4(\T^3)=\{0\}$, therefore the top power of the symplectic form satisfies $[\omega]^3=0$, which is a contradiction since $G$ is compact.
\end{remark}

We spell out the special case of Thm. \ref{thm:Poisgr} in which the regular foliation on $M$ consists of just one leaf,
 generalizing \cite[Cor. 2.19]{GuLi}. 
 \begin{cor}
Let $M$ be a manifold and $\pi$ be a Poisson structure  which is symplectic on an open dense subset of $M$ (for instance, a log symplectic structure). Consider the Lie algebroid $T^*M$ associated to the Poisson structure. Then the adjoint groupoid of $T^*M$ 
is a symplectic groupoid for the Poisson manifold $(M,\pi)$. 
\end{cor}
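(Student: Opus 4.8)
The plan is to recognize this corollary as the single-leaf instance of Theorem \ref{thm:Poisgr} combined with Corollary \ref{cor:desing}, so that the substantive work is already done and what remains is to check that the hypotheses specialize correctly. First I would observe that a Poisson structure $\pi$ which is symplectic on an open dense subset is almost regular with associated distribution $D=TM$: this is precisely the second extreme case recorded in Ex. \ref{ex:simple}, and it follows from Thm. \ref{thm:aregpois} by taking $D=TM$, whose leaves (the connected components of $M$) trivially contain every symplectic leaf as an open subset at points of the dense set $M_{\mathrm{reg}}$.

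Next I would justify that ``the adjoint groupoid of $T^*M$'' is meaningful and coincides with $H(\cF)$. Since $\pi$ has full rank on a dense open set, the anchor $\rho\colon T^*M\to TM$ given by contraction with $\pi$ is an isomorphism there, hence injective on a dense set; thus $T^*M$ is an almost injective Lie algebroid inducing $\cF$. By the uniqueness statement of the Lemma following Def. \ref{def:projsfol}, there is a canonical isomorphism $T^*M\cong A^{\cF}$, which is also the content of \eqref{eq:dfx} with $D^*=T^*M$. Proposition \ref{prop:adj} then guarantees that the adjoint groupoid of $T^*M$ exists, is unique up to isomorphism, and equals $H(\cF)$.

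Then I would invoke the two main results of the section. By Thm. \ref{thm:Poisgr}, $H(\cF)$ carries a canonical Poisson structure $\Pi$ making it a Poisson groupoid for the Lie bialgebroid $(D^*,D)=(T^*M,TM)$. Since $D=TM$ has a single leaf on each connected component of $M$, namely $P=M$ (arguing component-wise if $M$ is disconnected), Corollary \ref{cor:desing} applied to $P=M$ shows that $H(\cF)|_M=H(\cF)$ is a single symplectic leaf of $\Pi$, so that $\Pi$ is in fact symplectic, and moreover that $H(\cF)$ is a symplectic groupoid for $(M,\pi_M)=(M,\pi)$. This is exactly the assertion.

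I do not expect a genuine obstacle, as the statement is a direct specialization; the only point requiring care is the identification of the cotangent Lie algebroid $T^*M$ with the almost injective Lie algebroid $A^{\cF}$ (equivalently $D^*$), which hinges on $\pi$ having full rank on a dense set so that $\rho$ is almost injective. Once this identification is in place, both the existence and uniqueness of the adjoint groupoid and the symplecticity of $\Pi$ follow immediately from the results already established.
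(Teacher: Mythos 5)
Your proposal is correct and follows exactly the route the paper intends: the corollary is stated there as the special case of Thm.~\ref{thm:Poisgr} and Cor.~\ref{cor:desing} in which $D=TM$ has a single leaf (per connected component), with the adjoint groupoid of $T^*M$ identified with $H(\cF)$ via almost injectivity of the anchor and Prop.~\ref{prop:adj}. Your one point of care --- the identification $T^*M\cong A^{\cF}\cong D^*$ from eq.~\eqref{eq:dfx} --- is precisely the hinge the paper relies on as well, so there is nothing to add.
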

   
\subsection{Duality of Poisson groupoids}\label{subsec:duality}

It is well-known  \cite[\S 3]{MacXuBiPois} that if $(A,A^*)$ is a Lie bialgebroid, then $(A^*,A)$ also is.
A Poisson groupoid integrating a Lie bialgebroid $(A,A^*)$ and one integrating the 
Lie bialgebroid $(A^*,A)$ are said to be \emph{dual} to one another \cite[Def. 4.4.1]{alancoiso}.

Given an almost regular Poisson structure $(M,\pi)$, 
we saw that
 $H(D)$ and $H(\cF)$ are dual Poisson groupoids, by Prop. \ref{prop:HD} and Thm. \ref{thm:Poisgr}. The Poisson structure they induce on $M$ (via the target map) are equal up to a sign and are given by $-\pi$ and $\pi$ respectively.  Notice that the orbits on $M$ induced by these groupoids differ, since they are given   respectively by  the leaves of $D$ and the leaves of $\cF$ (i.e. the symplectic foliation of $M$).

Since the Lie bialgebroid $(D,D^*)$ arises from the $r$-matrix $\pi$, by \S \ref{sec:Liebi} we have a Lie algebroid morphism
$\sharp \colon D^*\to D$.

\begin{prop}\label{prop:hfhd}
There is a morphism of Lie groupoids
\begin{equation}\label{eq:fp}
H(\cF)\to H(D)
\end{equation}
which integrates $\sharp$ and is an anti-Poisson map.
\end{prop}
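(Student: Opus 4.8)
The plan is to integrate the Lie algebroid morphism $\sharp\colon D^*\to D$ to a Lie groupoid morphism, and then verify that it is anti-Poisson. The existence of the integrating morphism follows from the universality of $H(\cF)$ as the adjoint groupoid of $D^*$ (Prop.~\ref{prop:adj}). First I would observe that $H(D)$ is a Lie groupoid integrating $D$, and that $\sharp\colon D^*\to D$ is a Lie algebroid morphism by the discussion in \S\ref{sec:Liebi} (it is the contraction with the $r$-matrix $\pi$, which is a morphism for triangular Lie bialgebroids). By the Lie functoriality for adjoint groupoids, since $H(\cF)$ is the adjoint (hence source-simply-connected-universal-quotient) groupoid integrating $D^*$ and $H(D)$ integrates $D$, there is a morphism of Lie groupoids $H(\cF)\to H(D)$ differentiating to $\sharp$. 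Concretely, one passes through the $\bs$-simply connected groupoid $\Gamma$ integrating $D^*$: the morphism $\sharp$ integrates to $\Gamma\to H(D)$ (using that $H(D)$ is itself the adjoint groupoid of $D$, so any integrating morphism from a source-simply-connected groupoid factors through it), and then one checks that this descends along the projection $p\colon\Gamma\to H(\cF)=\Gamma/\!\sim_\Gamma$.

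The main obstacle is the anti-Poisson property, i.e. that \eqref{eq:fp} sends the Poisson structure $\Pi$ on $H(\cF)$ (from Thm.~\ref{thm:Poisgr}) to the negative of the Poisson structure $\overset{\leftarrow}{\pi}-\overset{\rightarrow}{\pi}$ on $H(D)$ (from Prop.~\ref{prop:HD}). To handle this I would again work on the source-simply-connected cover $\Gamma$, where both Poisson structures are pulled back, and exploit the two target-source maps: both $H(\cF)$ and $H(D)$ come with the compatibility that $\phi=(\bt,\bs)$ is a Poisson map into $(M,\pi)\times(M,-\pi)$ and $(M,-\pi)\times(M,\pi)$ respectively, as used in the proof of Thm.~\ref{thm:Poisgr}. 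The strategy is to verify the anti-Poisson identity on the open dense subset $N\subset\Gamma$ where $d\phi$ is injective, reducing the claim to a statement about the bivectors pushed forward to $M\times M$; there the two target-source maps of $H(\cF)$ and $H(D)$ differ exactly by the swap $(x,y)\mapsto(y,x)$ combined with the sign change induced by $\sharp$ being contraction with $\pi$. Once the identity holds on the dense open set $N$, it holds everywhere by continuity.

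Alternatively, and perhaps more cleanly, I would invoke the general duality theory for Poisson groupoids integrating dual Lie bialgebroids. Since $(D^*,D)$ and $(D,D^*)$ are dual Lie bialgebroids (\S\ref{subsec:duality}), and since the $r$-matrix $\pi$ gives the Lie algebroid morphism $\sharp\colon D^*\to D$, the standard result (Mackenzie--Xu) is that such a morphism of Lie algebroids underlying the bialgebroid structure integrates to an anti-Poisson morphism of the associated Poisson groupoids. The essential point one must still verify by hand is that the morphism $H(\cF)\to H(D)$ is well-defined at the level of the \emph{adjoint} groupoids (not merely the source-simply-connected ones), which is exactly why the explicit quotient description $H(\cF)=\Gamma/\!\sim_\Gamma$ from Prop.~\ref{lem:explicit} is useful: one checks the morphism $\Gamma\to H(D)$ is constant on $\sim_\Gamma$-equivalence classes, using that $\sharp$ together with $\phi=(\bt,\bs)$ determine the morphism and that $\sim_\Gamma$ is generated by holonomy with trivial effect on $\phi$.

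I expect the anti-Poisson verification to be the crux: the existence and smoothness of the morphism is essentially formal given the adjoint-groupoid machinery, but matching the signs and the two different target-source conventions for $H(\cF)$ and $H(D)$ requires care, and this is where the density of $N$ and the continuity argument do the real work.
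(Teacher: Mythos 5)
Your architecture (integrate $\sharp$ on the $\bs$-simply connected cover $\Gamma$, transfer to $H(\cF)$, then track signs) matches the paper, but the existence step contains a genuine gap. There is no ``Lie functoriality for adjoint groupoids'': adjointness makes $H(\cF)$ terminal among integrations of the \emph{fixed} algebroid $D^*$, and says nothing about integrating a morphism $D^*\to D$ \emph{out of} $H(\cF)$; in general algebroid morphisms integrate only from $\bs$-simply connected groupoids. Your concrete descent check is then justified by the claim that ``$\sharp$ together with $\phi=(\bt,\bs)$ determine the morphism'', and this is false: $H(D)$ is the holonomy groupoid of the regular foliation tangent to $D$, whose isotropy (holonomy) groups are nontrivial in general, so an element of $H(D)$ is \emph{not} determined by its source and target. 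From $y_1\sim_{\Gamma}y_2$ you only get $\phi(F(y_1))=\phi(F(y_2))$ for the integrated morphism $F\colon\Gamma\to H(D)$, which does not yield $F(y_1)=F(y_2)$. The missing ingredient is the quasi-graphoid property of $H(D)$ (\cite[Thm. 1, p. 492]{DebordJDG}). One can repair your descent with it at the level of bisections: if $b_1,b_2$ are local bisections through $y_1,y_2$ with $\bt\circ b_1=\bt\circ b_2$ (the characterization in Prop. \ref{lem:explicit}), then $F\circ b_1$ and $F\circ b_2$ are bisections of $H(D)$ with the same $\bt$-projection, hence equal because $H(D)$ is a quasi-graphoid, so $F(y_1)=F(y_2)$. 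The paper instead avoids descent altogether: it takes a neighbourhood $V$ of the units of $H(\cF)$ isomorphic to one in $\Gamma$, composes $V\hookrightarrow\Gamma\to\Delta\to H(D)$, and invokes Debord's extension result \cite[Prop. 1, p. 474]{DebordJDG} (source $\bs$-connected, target a quasi-graphoid) to extend this local morphism uniquely to a global groupoid morphism $\Phi\colon H(\cF)\to H(D)$. Either way, the quasi-graphoid input is what makes the step work; without it your argument fails.

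On the anti-Poisson property your emphasis is inverted relative to the paper: there it is essentially formal, not the crux. The paper notes that $\sharp$ is a morphism of Lie \emph{bialgebroids} from $(D^*,D)$ to $(D,\bar{D^*})$ (the dual structure induced by $-\pi$), hence integrates to a Poisson map $\Gamma\to\Delta$ of $\bs$-simply connected Poisson groupoids by \cite{maxu}, and $\Delta\to H(D)$ is anti-Poisson by construction. Your pushforward argument is nonetheless correct and is a genuinely different, and viable, verification: since $\Phi$ is base-preserving one has $\phi_{H(D)}\circ\Phi=\phi_{H(\cF)}$, both target-source maps are Poisson into $(M,\pi)\times(M,-\pi)$ and $(M,-\pi)\times(M,\pi)$ respectively by \cite[Thm. 4.2.3]{alancoiso}, and $d(\bt,\bs)$ is injective on $H(D)$ --- in fact at \emph{every} point, since $\bt$ restricts to a covering map on each $\bs$-fiber of the holonomy groupoid of a regular foliation --- so $\Phi_*\Pi=-\Pi_{H(D)}$ follows pointwise, without even needing your dense subset $N$ or the continuity step. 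Do delete the remark about the ``swap $(x,y)\mapsto(y,x)$'': $\Phi$ intertwines $\bt$ with $\bt$ and $\bs$ with $\bs$, so the two target-source maps differ only in the sign of $\pi$ on the factors, not by an exchange of factors, and carrying out the computation with a swap would break it.
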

\begin{remark}
The general theory of Lie groupoids implies that $\sharp$   integrates to a morphism $\Gamma\to H(D)$ where $\Gamma$ is  the $\bs$-simply connected Lie groupoid integrating  $D^*$. 
This does not imply the above proposition, since 
  $H(\cF)$ is not $\bs$-simply connected in general. 
\end{remark}

\begin{proof}
The map
$\sharp \colon D^*\to D$
is   a morphism of Lie \emph{bialgebroids} -- i.e. both a Lie algebroid morphism and a Poisson map -- from the Lie bialgebroid $(D^*,D)$ to the Lie bialgebroid $(D, \bar{D^*})$, where $\bar{D^*}$ denotes the Lie algebroid structure on the dual of $D$ given 
induced by $-\pi$ (see \S \ref{sec:Liebi}). Indeed, it is a Lie algebroid morphism by \S \ref{sec:Liebi}, and a Poisson map because its dual $-\sharp$ is a Lie algebroid morphism from $\bar{D^*}$ to $D$.
 Therefore $\sharp$ integrates to a morphism of Poisson groupoids $$\Gamma\to\Delta,$$ where $\Gamma$ is the $\bs$-simply connected Poisson groupoid integrating $(D^*,D)$  and $\Delta$  the $\bs$-simply connected Poisson groupoid integrating the triangular Lie bialgebroid $(D, \bar{D^*})$. Equivalently,  $\Delta$ is the Poisson groupoid obtained after changing the sign of the Poisson structure on the $\bs$-simply connected Poisson groupoid integrating the triangular Lie bialgebroid $(D,D^*)$.

Since $\cF$ and $D$ are projective foliations, we can use the results of Debord  \cite{DebordJDG}.  In \cite[Thm. 1, p. 492]{DebordJDG} it is shown that the holonomy groupoid associated to a projective foliation is a quasi-graphoid 
(for the definition see {\S \ref{subsec:holgr}), so in particular $H(D)$ is a quasi-graphoid}. 
Now, since $H(\cF)$ 
integrates the Lie algebroid $D^*$,
there is a  neighbourhood $V$ of the identity section in $H(\cF)$ which is isomorphic to a neighbourhood of the identity section in $\Gamma$. 
The composition $\phi\colon V\hookrightarrow \Gamma\to\Delta\to H(D)$  commutes with the source and target maps.
Since $H(\cF)$ is $\bs$-connected and $H(D)$ a quasi-graphoid, we can 
apply \cite[Prop. 1, p. 474]{DebordJDG} to conclude that there is a unique extension $\Phi : H(\cF) \to H(D)$ of $\phi$ which is a morphism of Lie groupoids. 

$\Phi$ differentiates to $\sharp$, and is an anti-Poisson map since $V\hookrightarrow\Gamma\to \Delta$ is a Poisson map and $\Delta\to H(D)$ is an anti-Poisson map.
\end{proof}

\subsection{Examples}

\begin{ex}

\begin{enumerate}
\item[i)]   Assume that $\pi$ is a regular Poisson structure on $M$.
 Then $\sharp \colon D^*\to D$ is an isomorphism of Lie algebroids \cite[Def. 4.4.2 (f)]{alancoiso}, and it integrates to an isomorphism of Lie groupoids $$H(\cF)\cong H(D).$$

\item[ii)] Assume that $\pi$ has full rank on a dense subset of $M$, i.e. $D=TM$.  Then
$H(D)=M\times M$,  the pair groupoid,  with the Poisson structure $(-\pi)\times \pi$ (see Prop. \ref{prop:HD}), and  $H(\cF)$ is a symplectic groupoid. The   morphism
induced by   the anchor map $\sharp \colon T^*M\to TM$  
 is  $$(\bt,\bs)\colon H(\cF)\to M\times M$$
  is given by the target and source maps of $H(\cF)$, and is an anti-Poisson map. 
  \end{enumerate}
\end{ex}
 
\begin{remark}\label{sec:discussion}
A particular case of ii) above is 
when $(M,\pi)$ is a log symplectic structure which is proper (see \cite[Def. 1.10]{GuLi}).

In that case Gualtieri-Li obtained\footnote{The Lie groupoid $H(\cF)$ is isomorphic to $\mathrm{Pair}_{\pi}(M)$ as introduced in \cite{GuLi}, because both $H(\cF)$ and $\mathrm{Pair}_{\pi}(M)$ are the adjoint groupoid of the cotangent Lie algebroid $T^*M$. This follows from Prop. \ref{prop:adj} and
  \cite[Cor. 2.19]{GuLi}.} 
 $H(\cF)$ -- as a Poisson groupoid -- via a blow-up procedure applied to the Lie groupoid $\mathrm{Pair}_Z(M)  {\rightrightarrows M}$ \cite[Thm. 2.15]{GuLi}, where $\mathrm{Pair}_Z(M)$ is isomorphic to the holonomy groupoid of the projective singular foliation of vector fields tangent to the exceptional hypersurface $Z$ of $(M,\pi)$ \cite[Cor. 2.18]{GuLi}. Further
 $\mathrm{Pair}_Z(M)$ itself is obtained from a blow-up procedure  from {the pair groupoid} $M\times M$ \cite[Thm. 2.14]{GuLi}.
Finally,    the morphism 
$(\bt,\bs)$ above
factors through the blow-down maps and morphisms of Lie groupoids $$H(\cF)\to \mathrm{Pair}_Z(M)\to M\times M.$$
 
This prompts us to wonder whether there is a larger class of  almost regular Poisson structures for which the Poisson groupoid $H(\cF)$ is obtained from $H(D)$ by some kind of iterated blow-up procedure. Natural candidates are certain classes of  log-f symplectic structures as introduced in \S \ref{sec:fls}, {see also Remark \ref{rem:explosion} below.}
\end{remark}

\paragraph{Linear Heisenberg-Poisson structures}\

For the next class of examples we need some preparation. We omit the proof of the following lemma.
\begin{lemma}\label{lem:sympgr}
Let $(V,\pi)$ be a Poisson vector space (i.e. $\pi\in \wedge^2 V$).

1) A symplectic groupoid integrating the Poisson  manifold $(V,\pi)$ is
$V^*\ltimes V$, with
\begin{itemize}
\item the action groupoid structure induced by the action of the Lie group $(V^*,+)$ on $V$ as follows: $\xi \in V^*$ acts translating by $\pi^{\sharp}\xi$,
\item the constant symplectic form $\Omega_{\pi}$ defined\footnote{Equivalently, using the canonical identification $V^*\times V\cong T^*V^*$, it is given by minus the canonical symplectic form on $T^*V^*$ plus a ``magnetic term'', namely the pullback of $\pi$ viewed as a 2-form on $V^*$.}
 by  $((\xi,v), (\xi',v'))\mapsto 
\langle v, \xi'\rangle-\langle v', \xi\rangle+\pi(\xi,\xi')$.
\end{itemize}
2) When $\pi$ has full rank, denoting by  $\omega\in \wedge^2V^*$ the corresponding\footnote{That is: the map $V\to V^*, v\mapsto \iota_v\omega$ equals $-\pi^\sharp$.} symplectic form, the following is an isomorphism of symplectic groupoids:
$$V^*\ltimes V \to V\times V,\;\;\;\ (\xi,v)\mapsto (v+\pi^{\sharp}\xi, v),$$
where $V^*\ltimes V$ is as in 1) and $V\times V$ is the pair groupoid endowed with symplectic form $\omega\times(-\omega)$.
\end{lemma}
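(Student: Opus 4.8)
The plan is to treat the two parts separately: part 1) establishes that $(V^{*}\ltimes V,\Omega_{\pi})$ is a symplectic groupoid inducing $\pi$, and part 2) exhibits the stated map as an isomorphism of symplectic groupoids. Throughout, the content is elementary constant-coefficient linear algebra, and I expect the only real difficulty to be the bookkeeping of conventions and signs.

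For part 1), since $V^{*}\ltimes V=V^{*}\times V$ is a vector space and $\Omega_{\pi}$ is the constant $2$-form
$$\Omega_{\pi}\big((\xi,v),(\xi',v')\big)=\langle v,\xi'\rangle-\langle v',\xi\rangle+\pi(\xi,\xi'),$$
it is automatically closed, so the first thing I would check is non-degeneracy: if $(\xi,v)$ pairs trivially with everything, then testing against the $v'$-directions forces $\xi=0$ and testing against the $\xi'$-directions forces $v+\pi^{\sharp}\xi=0$, hence $(\xi,v)=0$. Next I would verify that $\Omega_{\pi}$ is multiplicative, i.e. $m^{*}\Omega_{\pi}=\pr_{1}^{*}\Omega_{\pi}+\pr_{2}^{*}\Omega_{\pi}$ on the manifold of composable pairs. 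Parametrizing composable pairs $\big((\xi_{g},v_{g}),(\xi_{h},v_{h})\big)$, subject to $v_{g}=v_{h}+\pi^{\sharp}\xi_{h}$, by the free coordinates $(\xi_{g},\xi_{h},v_{h})$ and using $m(g,h)=(\xi_{g}+\xi_{h},v_{h})$, both sides become explicit constant bilinear forms; after cancelling the common $\langle\,\cdot\,\rangle$ terms the identity collapses to the single relation $\pi(\xi_{g},\xi_{h}')=-\pi(\xi_{h}',\xi_{g})$, which is exactly the skew-symmetry of $\pi$. This is the only genuine computation in the lemma, and I expect the source/target convention --- whether $(\xi,v)$ runs from $v$ to $v+\pi^{\sharp}\xi$ --- to be the main place where sign errors can creep in.

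Having shown that $(V^{*}\ltimes V,\Omega_{\pi})$ is a symplectic groupoid, it remains to identify the induced Poisson structure on the base $V$. I would do this at the level of Lie algebroids. The Lie algebroid of the transformation groupoid $V^{*}\ltimes V$ is the transformation algebroid of the infinitesimal action, namely the trivial bundle $V\times V^{*}\cong T^{*}V$ with anchor $\xi\mapsto\pi^{\sharp}\xi$ (the constant vector field generating translation by $\pi^{\sharp}\xi$); since $V^{*}$ is abelian and $\pi$ is constant, the bracket on constant sections vanishes. This coincides with the cotangent Lie algebroid of the constant Poisson structure $\pi$ on $V$, whose Koszul bracket of constant $1$-forms also vanishes. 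Because a symplectic groupoid determines the Poisson structure through its Lie algebroid --- and $V^{*}$ being $1$-connected makes $V^{*}\ltimes V$ the source-simply-connected integration --- the induced structure on $V$ is $\pi$, up to the sign fixed by the chosen convention for $\bt$ and $\bs$.

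For part 2), with $\pi$ of full rank the map $\Psi\colon(\xi,v)\mapsto(v+\pi^{\sharp}\xi,v)$ is a linear isomorphism $V^{*}\times V\to V\times V$, with inverse $(a,b)\mapsto\big((\pi^{\sharp})^{-1}(a-b),b\big)$. I would first check that it is a morphism of groupoids: it intertwines source and target with those of the pair groupoid ($\bs\Psi(\xi,v)=v$ and $\bt\Psi(\xi,v)=v+\pi^{\sharp}\xi$) and sends a composite $(\xi_{g}+\xi_{h},v_{h})$ to the pair-groupoid composite of $\Psi(\xi_{g},v_{g})$ and $\Psi(\xi_{h},v_{h})$, which is immediate from $v_{g}=v_{h}+\pi^{\sharp}\xi_{h}$. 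Finally I would verify $\Psi^{*}(\omega\times(-\omega))=\Omega_{\pi}$: writing $d\Psi(\dot\xi,\dot v)=(\dot v+\pi^{\sharp}\dot\xi,\dot v)$ and expanding $\omega(\dot v+\pi^{\sharp}\dot\xi,\dot v'+\pi^{\sharp}\dot\xi')-\omega(\dot v,\dot v')$, the three surviving terms match $\langle\dot v,\dot\xi'\rangle$, $-\langle\dot v',\dot\xi\rangle$ and $\pi(\dot\xi,\dot\xi')$ respectively, using the defining relation between $\omega$ and $\pi$, which reads $\iota_{\pi^{\sharp}\xi}\omega=-\xi$ (i.e. $\omega^{\flat}$ is the inverse of $\pi^{\sharp}$ up to sign). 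The only delicate point here is once more purely one of conventions --- keeping the signs of $\omega$ versus $\pi$ consistent --- so that the ``magnetic'' term $\pi(\dot\xi,\dot\xi')$ emerges with the correct sign.
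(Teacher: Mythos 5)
The paper offers no proof to compare against: immediately before the statement it writes ``We omit the proof of the following lemma'', so your argument must be judged on its own merits. On those merits it is essentially correct, and the computations check out: nondegeneracy and closedness of the constant form $\Omega_{\pi}$; the reduction of multiplicativity over the composable pairs (parametrized by $(\xi_g,\xi_h,v_h)$ with $v_g=v_h+\pi^{\sharp}\xi_h$) to the skew-symmetry of $\pi$; the groupoid-morphism check for $\Psi(\xi,v)=(v+\pi^{\sharp}\xi,v)$ in part 2); and the pullback computation $\Psi^{*}(\omega\times(-\omega))=\Omega_{\pi}$, which correctly uses the footnote's sign convention $\iota_{\pi^{\sharp}\xi}\,\omega=-\xi$.

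The one place where your write-up is softer than the statement is the end of part 1), where you identify the induced Poisson structure on the base only ``up to the sign fixed by the chosen convention''. As it stands, the Lie-algebroid argument cannot fix that sign even in principle: in this constant-coefficient setting the cotangent algebroids of $(V,\pi)$ and $(V,-\pi)$ are isomorphic (via $\xi\mapsto-\xi$, since both brackets vanish on constant sections and the anchors $\pi^{\sharp}$, $-\pi^{\sharp}$ are intertwined), so knowing $\mathrm{Lie}(V^{*}\ltimes V)$ up to isomorphism does not distinguish whether the symplectic groupoid integrates $\pi$ or $-\pi$ --- and that distinction is part of the assertion being proved. The gap closes with one line of linear algebra: from the formula for $\Omega_{\pi}$ one reads off $\Omega_{\pi}^{\flat}(\dot\xi,\dot v)=\bigl(\dot v+\pi^{\sharp}\dot\xi,\,-\dot\xi\bigr)\in V\times V^{*}$, hence $(\Omega_{\pi}^{\flat})^{-1}(a,\beta)=\bigl(-\beta,\,a+\pi^{\sharp}\beta\bigr)$, and therefore $d\bs\circ(\Omega_{\pi}^{\flat})^{-1}\circ(d\bs)^{*}=\pi^{\sharp}$ while $d\bt\circ(\Omega_{\pi}^{\flat})^{-1}\circ(d\bt)^{*}=-\pi^{\sharp}$. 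Thus the two structural maps push the inverse of $\Omega_{\pi}$ to $\pm\pi$ with opposite signs, and once the ambient conventions are fixed (the paper takes the target map of a Poisson groupoid to be Poisson, cf.\ its use of \cite[Thm.~4.2.3]{alancoiso}, together with a sign convention for inverting a symplectic form) exactly one of them induces $+\pi$, as the lemma requires. Alternatively, in the full-rank case your part 2) already pins the sign, since $(V\times V,\omega\times(-\omega))$ is the standard symplectic groupoid of the Poisson structure inverse to $\omega$, which is $\pi$ by the footnote's convention; for degenerate $\pi$ the direct inversion above is the cleanest route.
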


Using the above lemma we can describe explicitly the Poisson groupoid $H(\cF)$
for a class of almost regular Poisson manifolds.

\begin{prop}\label{prop:exft} 
Let $V$ be a vector space and $\pi\in \wedge^2 V$ a bivector with full rank, i.e. one obtained from a symplectic form. Let $f\colon \RR\to \RR$ a smooth function such that $\{t:f(t)\neq 0\}$ is dense in $\RR$. Consider the almost regular Poisson manifold $$\left(V\times \RR,\;\Pi:=f(t)\pi\right)$$ and denote by $\cF$ its symplectic foliation.

Then \begin{equation}\label{eq:isop}
H(\cF)\cong V^*\ltimes V\times \RR
\end{equation}
 as a Poisson groupoid, where for $V^*\ltimes V\times \RR$
\begin{itemize}
\item the action groupoid structure is induced by the action of the Lie group $(V^*,+)$ on $V\times \RR$ as follows: $\xi \in V^*$ acts   translating by $(f(t)\pi^{\sharp}\xi,0)$,
\item the symplectic leaves are  $(V^*\ltimes V\times \{t\}, \Omega_{f(t)\pi})$ as introduced in Lemma \ref{lem:sympgr}, where $t$ ranges over all real numbers.
\end{itemize}
\end{prop}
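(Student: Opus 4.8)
The plan is to identify the data attached to $(V\times\RR,\Pi)$, recognize the source-simply-connected integration of the associated Lie algebroid explicitly, show that the holonomy quotient is trivial, and finally read off the Poisson structure from the general results of this section.

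First I would observe that $(V\times\RR,\Pi)$ is almost regular with associated distribution $D$ whose leaves are the slices $V\times\{t\}$: indeed $\pi$ has constant full rank on $V$ and $f$ is a Casimir function with dense support, so this is an instance of the Casimir construction (cf.\ Ex.\ \ref{ex:simple2}), and one checks directly that $M_{\mathrm{reg}}=V\times\{t:f(t)\neq0\}$ is dense and that $D_{(v,t)}=V\times\{0\}\subset T_{(v,t)}(V\times\RR)$ there, so Thm.\ \ref{thm:aregpois} applies. Consequently $\h=D^{\circ}$ is spanned by $dt$, and by Rem.\ \ref{rem:Dstar} the almost injective Lie algebroid inducing $\cF$ is $A^\cF\cong D^*\cong T^*(V\times\RR)/\langle dt\rangle$, a trivial bundle with fiber $V^*$. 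Its anchor, being induced by contraction with $\Pi$, sends $(\xi,(v,t))\in V^*\times(V\times\RR)$ to $f(t)\pi^{\sharp}\xi\in V$. A short computation with the cotangent (Koszul) bracket shows that the bracket of two constant covectors lands in $\langle dt\rangle=D^{\circ}$, hence vanishes in $A^\cF$; thus $A^\cF$ is exactly the transformation Lie algebroid of the abelian infinitesimal action of $(V^*,+)$ on $V\times\RR$ by $\xi\mapsto f(t)\pi^{\sharp}\xi$.

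Next I would integrate this action algebroid. The action $\xi\cdot(v,t)=(v+f(t)\pi^{\sharp}\xi,t)$ is well defined (because $\pi^{\sharp}$ is linear and $V^*$ abelian), and the corresponding action groupoid $\Gamma:=V^*\ltimes(V\times\RR)$ integrates $A^\cF\cong D^*$. Its source-fibers are copies of the vector space $V^*$, hence contractible, so $\Gamma$ is the source-simply-connected integration of $D^*$. By Prop.\ \ref{lem:explicit} we have $H(\cF)=\Gamma/\!\sim_\Gamma$, so it remains to show that $\sim_\Gamma$ is trivial, equivalently that $\Gamma$ is a quasi-graphoid; this is the step I expect to be the main obstacle, and it is where the hypotheses on $f$ and $\pi$ are used. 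A local bisection of $\Gamma$ has the form $b(v,t)=(\beta(v,t),v,t)$ and satisfies $\bt\circ b=\id$ precisely when $f(t)\pi^{\sharp}\beta(v,t)=0$ on its domain. Since $\pi^{\sharp}$ is invertible (full rank) this forces $f(t)\beta(v,t)=0$, and since $\{t:f(t)\neq0\}$ is dense and $\beta$ is continuous we get $\beta\equiv0$, i.e.\ $b=\epsilon$. Hence $\sim_\Gamma$ is trivial and $H(\cF)\cong\Gamma=V^*\ltimes(V\times\RR)$ as Lie groupoids, with the stated action.

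Finally, for the Poisson structure I would invoke the results already established. By Thm.\ \ref{thm:Poisgr}, $H(\cF)$ carries a canonical Poisson structure making it a Poisson groupoid for $(D^*,D)$, and since $H(\cF)=\Gamma$ is source-simply-connected this is the unique such structure by \cite[Thm 4.1]{maxu}. By Cor.\ \ref{cor:desing}, for each leaf $P=V\times\{t\}$ of $D$ the restricted groupoid $H(\cF)|_P=V^*\ltimes V\times\{t\}$ is a symplectic leaf and is the source-simply-connected symplectic groupoid of $(P,\pi_P)=(V,f(t)\pi)$; the latter is computed in Lemma \ref{lem:sympgr} to be $V^*\ltimes V$ with the constant symplectic form $\Omega_{f(t)\pi}$. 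This yields the description of the symplectic leaves and completes the identification \eqref{eq:isop} as Poisson groupoids, the only genuinely new input being the quasi-graphoid argument above.
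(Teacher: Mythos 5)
Your proof is correct, and its skeleton coincides with the paper's: both exhibit $V^*\ltimes V\times\RR$ as a Lie groupoid integrating $D^*$, prove it is a quasi-graphoid (so that it must be $H(\cF)$, by the uniqueness recalled in \S\ref{subsec:holgr} and Prop.\ \ref{lem:explicit}), and then pin down the Poisson structure leafwise via Lemma \ref{lem:sympgr} and the uniqueness statement of \cite[Thm 4.1]{maxu}. The differences are local but worth recording. For the quasi-graphoid step, the paper argues by continuity from Lemma \ref{lem:sympgr} 2): on the slices with $f(t)\neq 0$ the groupoid is isomorphic to a pair groupoid, hence a quasi-graphoid, and density of $\{f\neq 0\}$ does the rest; your explicit bisection computation ($\bt\circ b=\id$ forces $f(t)\pi^{\sharp}\beta=0$, hence $\beta\equiv 0$ by invertibility of $\pi^{\sharp}$ together with density and continuity) is the same density argument unpacked, and is more elementary in that it avoids the pair-groupoid identification. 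For the Poisson structure, the directions are opposite: the paper first checks that the leafwise forms $\Omega_{f(t)\pi}$ assemble into a smooth Poisson structure on the model, verifies via the conormal bundle of the units that the result is a Poisson groupoid integrating $(D^*,D)$, and concludes by uniqueness; you instead transport the canonical structure $\Pi$ of Thm.\ \ref{thm:Poisgr} through the Lie groupoid isomorphism and identify its symplectic leaves using Cor.\ \ref{cor:desing} --- legitimate here because the source fibers, being copies of $V^*$, are simply connected, so the Mackenzie--Xu uniqueness applies both globally and leafwise. Your route is slightly shorter and spares the smoothness verification (smoothness being inherited from Thm.\ \ref{thm:Poisgr}), while the paper's route has the merit of displaying the Poisson bivector on the explicit model directly. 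Finally, your preliminary identification of $A^{\cF}$ as the transformation algebroid of the abelian $V^*$-action, via the Koszul bracket of constant covectors landing in $\langle dt\rangle=D^{\circ}$, spells out a point the paper dispatches by citing Lemma \ref{lem:sympgr} 1); this is a useful addition rather than a deviation.
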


\begin{proof}
Since $f$ is a smooth function, the groupoid structure and Poisson structure on $V^*\ltimes V\times \RR$ are smooth. They are compatible and combine into a Poisson groupoid by Lemma \ref{lem:sympgr} 1). 

Notice that, with the notation of Thm. \ref{thm:aregpois}, we have $D_{(v,t)}=T_vV\times \{0\}$ at every point $(v,t)\in V\times \RR$.
The fact that $V^*\ltimes V\times \RR$, as a Lie groupoid, integrates the Lie algebroid $D^*$ follows from Lemma \ref{lem:sympgr} 1).  Further
 $V^*\ltimes V\times \RR$ is a quasi-graphoid as defined in \S \ref{subsec:holgr}: this is true by a continuity argument {since $f$ has full support} and since  the Lie subgroupoids $V^*\ltimes V\times \{t\}$ for which $f(t)\neq 0$ {are quasi-graphoids}, by Lemma \ref{lem:sympgr} 2). By \S \ref{subsec:holgr}, this establishes eq. \eqref{eq:isop} as an isomorphism of  Lie groupoids.

Recall that
for any symplectic groupoid, the  conormal bundle of the (Lagrangian) submanifold of units is mapped isomorphically onto the tangent bundle of the space of units. Applying this to $V^*\ltimes V\times \{t\}$
for every $t$, we see that the  conormal bundle of the space of unit
$V\times \RR$ in $V^*\ltimes V\times \RR$ is isomorphic to $D$ {as Lie algebroid}.
We conclude that $V^*\ltimes V\times \RR$ is a Poisson groupoid that integrates the Lie bialgebroid $(D^*,D)$.  By the uniqueness in \cite[Thm 4.1]{maxu} (see \S \ref{sec:Liebi}) this concludes the proof.
\end{proof}

In the setting of Prop. \ref{prop:exft}, we have $H(D)\cong  V\times V\times \RR$
as Poisson groupoids, where the Poisson tensor at a point $(w,v,t)$ is $-f(t)\pi_w+f(t)\pi_v$ and, 
for each $t$, $V\times V\times \{t\}$
is a Lie subgroupoid with the pair groupoid structure. In accordance with Prop. \ref{prop:hfhd}, the Lie algebroid morphism
$\Pi^{\sharp}$ integrates to a morphism of Lie groupoids and anti-Poisson map
\begin{eqnarray*}
V^*\ltimes V\times \RR&\to V\times V\times \RR,\\ (\xi,v,t)&\mapsto (v+f(t)\pi^{\sharp}\xi, v,t) 
\end{eqnarray*}

essentially given by the target-source map. Notice that this map is not an isomorphism if the function $f$ has a zero.

\begin{remark}\label{rem:explosion}
{In the  case $f(t)=t$, the almost regular Poisson manifolds of Prop. \ref{prop:exft} are instances of   
 the Heisenberg-Poisson manifolds arising from symplectic manifolds   considered in  Ex. \ref{ex:simple2}, and also of log-f symplectic manifolds. 
The Lie groupoid $H(\cF)$ described in Prop. \ref{prop:exft} is isomorphic to the tangent groupoid of  Connes associated to the manifold $V$, i.e. to the union over  all $t\in \RR\setminus\{0\}$ of copies of the pair groupoid $V\times V$ and  at $t=0$ of the vector bundle $TV$ (seen as a Lie groupoid over $V$). This
 can be checked directly using Lemma \ref{lem:sympgr} (use item 2) at $t\neq 0$ and
item 1) at $t=0$).} In general, as pointed out to us by E. Hawkins, for the Heisenberg-Poisson manifold of any symplectic manifold $M$, the associated singular foliation has  holonomy groupoid isomorphic to the  tangent groupoid of $M$.

{  
{More generally, for any Poisson manifold $(M,\pi)$, consider the singular foliation (denoted by $\cF'$) of the  Heisenberg-Poisson manifold $(M\times \RR,t\pi)$. The holonomy groupoid $H(\cF')$, a topological groupoid over $M\times \RR$, was given in \cite[\S 3]{IakAnal} generalizing a construction called  ``deformation to the normal cone'' or ``explosion''. If $\pi$ is almost regular then $t\pi$ is also almost regular, and $$H(\cF')=A^{\cF}\times \{0\}\;\;\sqcup\;\; H(\cF)\times (\RR\setminus\{0\})$$
where $\cF$ is the singular foliation on $M$ induced by $\pi$. Thm. \ref{thm:Poisgr}  implies that $H(\cF')$ is a Poisson groupoid.}
Notice that symplectic groupoids for Heisenberg-Poisson manifolds arising from symplectic manifolds were constructed by Weinstein in  \cite{w-heis} using ``double explosions'', and the construction was extended by Hawkins  \cite[\S 2, \S 4.3]{HawkinsPlanck} to more general  Heisenberg-Poisson manifolds. 
}
\end{remark}

\section{On the role of the holonomy covers in the integrability problem}\label{sec:integralgds}

Let $E \to M$ be an arbitrary Lie algebroid and put $\cF$ the singular foliation associated to $E$, namely $\cF$ is the $C^{\infty}(M)$-submodule of $\vX_c(M)$ generated by the image of $\Gamma_c (E)$ by the anchor map.  Recall that the integrability problem for $E$ consists of determining whether there exists an $\bs$-connected Lie groupoid  whose Lie algebroid is isomorphic to $E$. In this section we discuss the role of the \emph{germinal isotropy Lie algebras} $\h_x$ {(see \S \ref{sec:sfLA})} and the \emph{holonomy covers}\footnote{Let $L_x$ be the leaf of $\cF$ at $x \in M$. The holonomy cover of $L_x$ is the $\bs$-fiber $H(\cF)_x$ of the holonomy groupoid} of leaves of $\cF$ in the integrability problem for $E$. 

Let us start with this by exhibiting some evidence which motivates us to look at the Lie algebras $\h_x$ in the first place. Recall from \cite[Def. 3.1]{CrFeLie} that the   obstruction to the integrability of $E$
is described by the monodromy groups: At every  point $x \in M$, the monodromy group at $x$ is a certain subset $N_x(E)$ of the center $Z\g_x$ of the isotropy Lie algebra $\g_x$ 
(the latter was defined through the extension \eqref{eqn:isotropyLalgd}).
By \cite{CrFeLie}, the main\footnote{The other requirement is that the monodromy groups are discrete in a uniform way as $x$ moves in the base manifold $M$.} requirement for the integrability of $E$ is the discreteness of $N_x(E)$ for every $x \in M$ (first integrability obstruction). It turns out that these monodromy groups, at least nearby the origin, are contained in the germinal isotropy Lie algebras:

\begin{prop}\label{prop:questionnew}
Let $E\to M$ be any  Lie algebroid. For every $x\in M$ there is an open neighbourhood $U$ of $x$ in $E$ such that  $N_x(E)\cap U$ lies inside the germinal isotropy Lie algebra $\h_x$.
\end{prop}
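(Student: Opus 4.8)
$\textbf{Proof proposal.}$

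The plan is to recall the definition of the monodromy group $N_x(E)$ from \cite{CrFeLie} and show directly that its elements near the origin satisfy the characterization of $\h_x$ given in Lemma \ref{lem:nicechar} i), namely that they admit an extension to a section of $E$ mapping identically to zero under $\rho$. Recall that $N_x(E)\subset Z\g_x\subset \g_x=\ker(\rho_x)$ is defined via the variation of homotopy classes of $A$-paths, or equivalently via the boundary map $\partial\colon \pi_2(L_x)\to \g_x$ of the leaf $L_x$ through $x$, whose image generates $N_x(E)$. Concretely, an element of the monodromy group arises as $\partial[\gamma]$ for a (based) sphere $\gamma$ in $L_x$, and it is obtained by integrating the variation of a family of $A$-paths; the key known feature is that $N_x(E)$ consists of elements of $Z\g_x$ that can be realized ``infinitesimally by nearby leaves''.

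First I would fix $x\in M$ and restrict attention to a foliated neighbourhood. The crucial structural input is that $\h_x$ measures precisely the covectors (or rather algebroid elements) that extend to sections vanishing under $\rho$ on \emph{all nearby leaves}, by eq.~\eqref{eqn:essisotropy} and Lemma \ref{lem:nicechar} i). The monodromy group $N_x(E)$, by its very construction in \cite{CrFeLie} as a limit/variation over a family of $A$-homotopies supported near $x$, produces elements of $\g_x$ that are obtained as limits of expressions living in the isotropy Lie algebras $\g_y$ of leaves $L_y$ accumulating at $L_x$. Thus the heart of the argument is to exhibit, for a monodromy element $v=\partial[\gamma]$ with $\gamma$ small enough to lie in the chosen neighbourhood $U$, an honest local section $\tilde{e}\in\Gamma(E)$ extending $v$ with $\rho(\tilde{e})\equiv 0$; equivalently, to show that $v$ lies in the image of $\Gamma(\h)$ evaluated at $x$. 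This is what the phrasing ``$N_x(E)\cap U$ lies inside $\h_x$'' encodes: only the part of the monodromy group close to the origin is controlled, since the semicontinuity noted after Lemma \ref{lem:nicechar} means $\h_x$ can be small while $\h_y$ is large nearby.

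The main technical step, and the one I expect to be the principal obstacle, is translating the homotopical/analytic definition of $N_x(E)$ (as a subset of $Z\g_x$ arising from spheres in $L_x$ and the integration of $A$-paths) into the purely algebraic extension condition defining $\h_x$. The bridge should be the observation that a monodromy element is flat/parallel for the natural Bott-type connection along the leaf, hence extends to a section in the kernel of the anchor over a neighbourhood of the leaf; the restriction ``$\cap U$'' guarantees the relevant $A$-homotopy stays within a region where such an extension exists. I would verify that the family of $A$-paths defining $v$ can be chosen supported in $U$, extract from it a section of $E$ along $L_x$ landing in $\bigcup_y\g_y$, and then extend it arbitrarily; Lemma \ref{lem:nicechar} i) then certifies $v\in\h_x$. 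Throughout I would use that $N_x(E)\subset Z\g_x$ together with Prop.~\ref{lem:center}, which already places $\h_x$ inside $Z\g_x$ for Poisson algebroids, to see that the two subspaces live in the same ambient space and the inclusion is the natural one. The delicate point is purely the passage from the integrated variation defining monodromy to a smooth kernel extension, and confining it to $U$ is exactly what makes this passage possible.
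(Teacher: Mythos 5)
There is a genuine gap, and it sits exactly where you predicted it would: the passage from the homotopical definition of $N_x(E)$ to the extension condition defining $\h_x$. The paper's proof is short because it outsources precisely this step to \cite[Prop.~5.10]{CrFeLie}: every element $v\in N_x(E)$ sufficiently close to the zero section of $E$ extends to a \emph{smooth local section $\sigma$ of $E$ over an open neighbourhood of $x$ in $M$} taking values in the monodromy groups $N_y(E)$; since $N_y(E)\subset\ker(\rho_y)=\g_y$ for every $y$, this $\sigma$ satisfies $\rho(\sigma)\equiv 0$ identically, and Lemma \ref{lem:nicechar}~i) then gives $v\in\h_x$. Your substitute for this ingredient does not work. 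You propose to extract a section of $E$ \emph{along the leaf} $L_x$ with values in $\bigcup_{y\in L_x}\g_y$ and ``then extend it arbitrarily''. But Lemma \ref{lem:nicechar}~i) demands $\rho(\tilde e)\equiv 0$ on a full open set in $M$, i.e.\ transversally to the leaf as well, and an arbitrary extension off $L_x$ gives no control there. Worse, the leafwise step carries no information: over $L_x$ the kernels $\g_y$ form a bundle of constant rank, so \emph{any} $v\in\g_x$ admits such a leafwise extension. Your argument would therefore prove $\g_x\subset\h_x$ for every Lie algebroid, which is false --- for $E=T^*\mathfrak{su}(2)^*$ at the origin (Ex.~\ref{ex:su2}, Prop.~\ref{prop:linPois}) the leaf is a point, the leafwise extension is vacuous, yet $\g_0=\mathfrak{su}(2)$ while $\h_0=Z(\mathfrak{su}(2))=\{0\}$. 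The Bott-connection heuristic suffers from the same defect: flatness along the leaf only controls the leafwise direction, whereas $\h_x$ is a statement about all nearby leaves.

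A secondary but telling misreading: you interpret the restriction to $U$ as confining the $A$-homotopies to a small region of $M$. In fact $U$ is a neighbourhood of $x$ in the \emph{total space} $E$, so ``$N_x(E)\cap U$'' cuts down the \emph{size} of the monodromy vectors in the fiber $E_x$, not the support of the spheres in $L_x$. This is essential: by the lower semicontinuity of $x\mapsto\dim\h_x$ noted after Lemma \ref{lem:nicechar}, only monodromy elements close to $0\in E_x$ can be continued smoothly through the nearby monodromy groups, and that smallness hypothesis is exactly what \cite[Prop.~5.10]{CrFeLie} needs. To repair your proof you would have to establish (or cite) that smooth-variation statement for the monodromy groups over a neighbourhood of $x$ in $M$; with it in hand, the rest of your argument collapses to the paper's two lines.
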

\begin{proof}
By \cite[Prop. 5.10]{CrFeLie},  there is an open neighbourhood $U$ of $x$ in $E$ such that  every element $v\in N_x(E)\cap U$  can locally be extended to a smooth section $\sigma$ of $E$ taking values in the monodromy groups. Since the monodromy group at any point $y$ lies inside $ker(\rho_y)=\g_y$, in particular $\sigma$ takes values in the kernel of the anchor map $\rho$, so $v\in \h_x$ by Lemma \ref{lem:nicechar}.
\end{proof}

\begin{remark}\label{rem:essZ}
As we saw in Prop. \ref{lem:center}, when $E = T^{\ast}M$, where $(M,\pi)$ is a Poisson manifold, the germinal isotropy Lie algebra $\h_x$ lies in the center $Z\g_x$. In this case Prop. \ref{lem:center} and Prop. \ref{prop:questionnew} can be summarized by the following inclusions: for every $x\in M$ there is an open neighbourhood $U$ of $x$ in $T^*M$ such that $$N_x({T^*M})\cap U\subset \h_x\subset Z(ker(\rho_x)).$$  
\end{remark}

\subsection{Extensions of Lie algebroids and the first integrability obstruction}
\label{subsec:extensions}

In order to understand better the role of the germinal isotropy Lie algebras and the holonomy covers in the integrability problem of the Lie algebroid $E$, let us change our point of view slightly: Recall from \cite{CrFeLie} that the monodromy group $N_x(E)$ is isomorphic to a subgroup of the image $\mathcal{M}_x$ of a certain boundary map $\partial_x : \pi_2(L_x) \to \g_x$, where $L_x$ is the leaf through $x$ and $\g_x$ the (usual) isotropy Lie algebra at $x$, namely the kernel of the anchor map $\rho_x : {E_x} \to T_x L_x$. Also recall that $N_x(E)$ is discrete if{f} $\mathcal{M}_x$ is discrete. 

\begin{enumerate}
\item In favorable cases\footnote{That is, when the isotropy Lie algebra $\g_x$ is abelian.}, the boundary map is calculated as follows: Let $\sigma : T_x L_x \to E_x$ be a right-splitting of the anchor map $\rho_x$ and put $R_{\sigma} : T_x L_x \times T_x L_x \to \g_x$ its curvature. Then  $\partial[\gamma] = \int_{S^2}{\gamma^{\ast}}R_{\sigma}$ for every $[\gamma] \in \pi_2(L_x)$. This shows that the integrability of $E$ is strongly related with the understanding of each fiber $E_x$ as an extension $0 \to \g_x \to E_x \stackrel{\rho_x}{\longrightarrow} T_x L_x \to 0$.
\item On the other hand, as we saw earlier in the sequel (\cf eq. \eqref{eqn:cleanextn}) 
 the restriction $E_{L_x}$ is also an extension
\begin{eqnarray}\label{eqn:Brahicleaf}
0 \to \h_{L_x} \to E_{L_x} \stackrel{\hat{\rho}}{\longrightarrow} {\cA^{\cF}_{L_x}} \to 0
\end{eqnarray}
{Recall from \cite{Debord2013} that $H(\cF)_{L_x} = \bs^{-1}(L_x)$ is a (transitive) Lie groupoid and its Lie algebroid is $\cA^{\cF}_{L_x}$.}
\item As we saw in \S \ref{subsec:pois} though, when the foliation $\cF$ is projective we can write the entire Lie algebroid $E$ as an extension 
\begin{eqnarray}\label{eqn:Brahicareg}
0 \to \h \to E \stackrel{\hat{\rho}}{\longrightarrow} A^{\cF} \to 0
\end{eqnarray}
As $A^{\cF}$ is the Lie algebroid of the holonomy groupoid $H(\cF)$, this extension automatically carries all the germinal isotropy Lie algebras $\h_x$ as well as the holonomy covers $H(\cF)_x:=\bs^{-1}(x)$ of the leaves $L_x$. It is straightforward that $E$ is integrable if and only if this extension is integrable.
\end{enumerate}

Whence the integrability problem  for Lie algebroids can be cast to the integrability problem for extensions of Lie algebroids. The latter was treated by Brahic in \cite{Brahic}, so we can exploit the apparatus given there. Let us discuss the calculation of the (first) integrability obstruction for the extension \eqref{eqn:Brahicleaf}.

\begin{enumerate}
\item As we said above, the restriction $H(\cF)_{L_x}$ is a Lie groupoid and it integrates the Lie algebroid $A^{\cF}_{L_x}$. {Put $H(\cF)_x=\bs^{-1}(x)$.} Since \eqref{eqn:Brahicleaf} is a ``clean'' extension (see Remark \ref{rem:cleanextn}) in the sense of Brahic \cite{Brahic}, the obstruction to its integrability is described by {the behaviour, as $x$ moves along $M$, of the groups} $\mathcal{\widehat{M}}_x$ {defined as the image} of a boundary map $\widehat{\partial}_x : \pi_2(H(\cF)_x) \to \h_x$.
\item Now assume that the kernel $\h_{L_x}$ of the extension \eqref{eqn:Brahicleaf} is abelian. The arguments in \cite[Lemma 3.6]{CrFeLie} can be adapted to the setting of \cite[\S 4]{Brahic}. Thus extension \eqref{eqn:Brahicleaf} can be used to calculate the groups $\mathcal{\widehat{M}}_x$ in the following way: Choose a splitting $\omega : A^{\cF}_{L_x} \to E_{L_x}$ and consider its curvature $R_{\omega} : A^{\cF}_{L_x} \times A^{\cF}_{L_x} \to \h_{L_x}$ defined by $R_{\omega}(X,Y) = \omega[X,Y]-[\omega(X),\omega(Y)]$. Then 
\begin{eqnarray}\label{eq:moncomp}
\mathcal{\widehat{M}}_x = \left\{\int_{\gamma}R_{\omega} : [\gamma] \in \pi_2(H(\cF)_x)\right\}
\end{eqnarray}
\item As mentioned in Remark \ref{rem:essZ}, the assumption that $\h_{L_x}$ be abelian is satisfied when $E=T^{\ast}M$ for $(M,\pi)$ an arbitrary Poisson manifold. Consider for instance the Lie-Poisson structure of $\mathfrak{su(2)}^*$.  In Example \ref{ex:su2} we showed that the germinal isotropy Lie algebra vanishes at $0$, whence the monodromy group at $0$ vanishes. The germinal isotropy $\h_x$ at every other point $x \neq 0$ is equal to the (usual) isotropy $\g_x$ of the Lie algebroid $E=T^{\ast}\mathfrak{su(2)}^*$ because the leaf at $x$ is regular (\cf extension \eqref{eqn:essisotropyextn}). Since regular leaves leaf are spheres it follows that $\g_x = \R$.
\end{enumerate}

Last, let us discuss the relation between the monodromy groups $\mathcal{M}_x$ and $\mathcal{\widehat{M}}_x$. Since $H(\cF)_{L_x}$ is a Lie groupoid, we have an $H(\cF)_x^x$-principal bundle $\bt\colon H(\cF)_x \to L_x$ (\cf \cite[Prop. 4.17]{AZ1}) , so we obtain a map $\bt_{\ast} : \pi_2(H(\cF)_x) \to \pi_2(L_x)$. This map is injective, as one sees looking at the associated long exact sequence of homotopy groups.
 The interested reader may check the definitions of the boundary maps $\partial_x$ and $\widehat{\partial}_x$ given in \cite{CrFeLie} and \cite{Brahic} respectively, to verify that $\widehat{\partial}_x = \partial_x \circ \bt_{\ast}$. Hence {$\mathcal{\widehat{M}}_x$ is always a subgroup of $\mathcal{M}_x$.} 

By the injectivity of $\bt_{\ast}$ above, we see that  determining $\mathcal{\widehat{M}}_x$ requires in principle less computations than determining $\mathcal{M}_x$.

\subsection{Conclusions}\label{sec:conclusions}

The discussion in the previous section \S \ref{subsec:extensions} concludes that thanks to:
\begin{itemize}
\item the notion of \textit{germinal isotropy};
\item working with \textit{holonomy covers of leaves} instead of the leaves themselves,
\end{itemize}
we can establish the following facts:
\begin{enumerate}
\item\label{concl:a} \textit{If $E \to M$ is a Lie algebroid with \emph{abelian} germinal isotropy at each point $x \in M$, the {monodromy groups $\widehat{\mathcal{M}}_x$ can be computed as in \eqref{eq:moncomp}.}}
\item\label{concl:b} \textit{The {monodromy groups $\widehat{\mathcal{M}}_x$} of any Poisson manifold $(M,\pi)$ can be computed as in \eqref{eq:moncomp}.}
\item\label{concl:c} \textit{{Let $(M,\pi)$ be an arbitrary Poisson manifold. If the holonomy covers of the leaves of the associated symplectic foliation are 2-connected, i.e. $\pi_{2}(H(\cF)_x) =\{0\}$ for all $x\in M$, then the Lie algebroid $T^* M$ is integrable.}}
\end{enumerate}
{Item (\ref{concl:c}) above follows because when all the holonomy covers are 2-connected, the monodromy groups $\widehat{\mathcal{M}}_x$ vanish for every $x \in M$. For instance, this is exactly what happens for the example given in Prop. \ref{prop:exft}.}

\paragraph{The almost regular case} 

\begin{remark}
{Note that when $(M,\pi)$ is an almost regular Poisson structure, item (\ref{concl:c}) above also follows from \cite[Thm. 5]{VanEst}. Indeed, in this case we have an abelian extension of Lie algebroids
\begin{eqnarray}\label{eqn:extn}
0\to \h\to T^*M\overset{\widetilde{\rho}}{\to} D^{\ast} \to 0
\end{eqnarray}
where ${D^{\ast}}\cong A^{\cF}$ (\cf Remark \ref{rem:Dstar}, Notation \ref{not:algdcF} and Lemma \ref{lem:algcF} for the Lie algebroid structure of $D^{\ast}$). Since $\h$ is abelian, \cite[Thm. 5]{VanEst} implies that if $D^{\ast}$ integrates to a Lie groupoid with 2-connected $\bs$-fibers then $T^*M$ is integrable. Now use that a Lie groupoid integrating $D^{\ast}$ is the holonomy groupoid $H(\cF)$, by \S \ref{subsec:holgr}, in particular Prop. \ref{prop:adj}.}

{Whence item (\ref{concl:c}) above can be thought of as a generalization of \cite[Thm. 5]{VanEst} to arbitrary Poisson manifolds.}
\end{remark}

{Now suppose that $(M,\pi)$ is an almost regular Poisson manifold such that $T^{\ast}M$ is integrable. Let $\Sigma$ the source simply connected (s.s.c.) Lie groupoid integrating $T^{\ast}M$. It turns out that the topology of the $\bs$-fibers of $\Sigma$ is related with the topology of the holonomy covers. Specifically:}

{Consider the short exact sequence of Lie groupoids
\begin{eqnarray}\label{eq:sesgroid} 
1_M\to \ker p \to \Sigma \stackrel{p}{\to} H(\cF)\to 1_M
\end{eqnarray}
integrating \eqref{eqn:extn}. Notice that $\ker p$ is a bundle of abelian Lie groups, possibly disconnected and of dimension equal to $rank(\h)$. For every $x \in M$, we denote $(\ker p)_x^0$ the connected component of the identity of $(\ker p)_x$.}

\begin{prop}\label{prop:almregintegr}
{Let $(M,\pi)$ be an almost regular Poisson manifold. If $T^{\ast}M$ is integrable, then at every $x\in M$ 
there is a short exact sequence of groups
\begin{equation}\label{eq:sesgr}
1 \to \pi_2(\Sigma_x) \to \pi_{2}(H(\cF)_x) \to 
\pi_1((ker p)_{x}^{0})
 \to 1.
\end{equation}
In particular, if the source fibers of $H(\cF)$ are 2-connected, then 
$T^*M$ is an integrable Lie algebroid and 
the source fibers of its s.s.c. integration $\Sigma$ are also 2-connected.}
\end{prop}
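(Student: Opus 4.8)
The plan is to derive \eqref{eq:sesgr} from the long exact homotopy sequence of the map of $\bs$-fibers induced by $p$ in \eqref{eq:sesgroid}, feeding in two classical vanishing facts: that $\pi_2$ of any Lie group is trivial, and that the $\bs$-fibers of the $\bs$-simply connected groupoid $\Sigma$ are simply connected. First I would fix $x\in M$ and restrict the short exact sequence of Lie groupoids \eqref{eq:sesgroid} to the $\bs$-fibers over $x$, obtaining a map $p_x\colon \Sigma_x\to H(\cF)_x$ covering the identity on the $\bt$-image. The essential preliminary step is to check that $p_x$ is a locally trivial fiber bundle whose fiber over the unit $1_x$ is the isotropy group $(\ker p)_x$. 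This follows because the extension \eqref{eq:sesgroid} exhibits $\Sigma$ as a principal bundle over $H(\cF)$ for the bundle of Lie groups $\ker p$: for $g\in H(\cF)_x$ the fiber $p_x^{-1}(g)$ is a torsor under $(\ker p)_{\bt(g)}$ via left multiplication, and since $\ker p\to M$ is a locally trivial bundle of Lie groups pulled back to the connected base $H(\cF)_x$ (via $\bt$), all fibers are homotopy equivalent to $(\ker p)_x$. Hence $p_x$ is a Serre fibration with fiber $(\ker p)_x$.

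Next I would write down the associated long exact sequence
$$
\cdots \to \pi_2((\ker p)_x) \to \pi_2(\Sigma_x) \to \pi_2(H(\cF)_x) \overset{\partial}{\to} \pi_1((\ker p)_x) \to \pi_1(\Sigma_x) \to \cdots
$$
and truncate it using two inputs. On the left, $\pi_2((\ker p)_x)=0$ because $\pi_2$ of any Lie group vanishes, so the map $\pi_2(\Sigma_x)\to \pi_2(H(\cF)_x)$ is injective. On the right, $\pi_1(\Sigma_x)=0$ because $\Sigma$ is $\bs$-simply connected, so the connecting map $\partial$ is surjective onto $\pi_1((\ker p)_x)$; and $\pi_1((\ker p)_x)=\pi_1((\ker p)_x^0)$ since $\pi_1$ of a Lie group depends only on the identity component. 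Combining these, the segment of the sequence collapses to exactly the short exact sequence \eqref{eq:sesgr}.

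Finally, for the ``in particular'' clause I would argue as follows. If the $\bs$-fibers $H(\cF)_x$ are $2$-connected then the monodromy groups $\widehat{\cM}_x$ computed in \eqref{eq:moncomp} vanish, so by the earlier discussion (item (\ref{concl:c}) of \S\ref{sec:conclusions}, equivalently \cite[Thm.~5]{VanEst} applied to the abelian extension \eqref{eqn:extn}) the Lie algebroid $T^*M$ is integrable; hence the first part applies and \eqref{eq:sesgr} holds. Plugging $\pi_2(H(\cF)_x)=0$ into \eqref{eq:sesgr} forces $\pi_2(\Sigma_x)=0$, and since $\Sigma$ is $\bs$-simply connected we already have $\pi_0(\Sigma_x)=\pi_1(\Sigma_x)=0$; therefore the $\bs$-fibers of $\Sigma$ are $2$-connected.

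I expect the main obstacle to be the first step, namely rigorously establishing that $p_x$ is a fibration with fiber $(\ker p)_x$. The delicate point is that the fibers of $p_x$ are torsors over the isotropy groups $(\ker p)_{\bt(g)}$, which a priori depend on $\bt(g)$; one must invoke local triviality of $\ker p\to M$ together with connectedness of the $\bs$-fiber $H(\cF)_x$ to see that the bundle is modeled on the single group $(\ker p)_x$ and that the homotopy long exact sequence is genuinely available. Once this is in place, the remaining deductions are purely formal.
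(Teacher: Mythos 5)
Your proposal is correct and follows essentially the same route as the paper: the paper likewise restricts the short exact sequence \eqref{eq:sesgroid} to the $\bs$-fibers over $x$ to get the fibration $(\ker p)_x \to \Sigma_x \overset{p}{\to} H(\cF)_x$, applies the long exact homotopy sequence, and truncates it using $\pi_2$ of a Lie group being trivial together with $\pi_1(\Sigma_x)=\{0\}$, with the ``in particular'' clause handled exactly as you do via the vanishing of the monodromy groups $\widehat{\mathcal{M}}_x$. One small simplification of the step you flag as delicate: writing $p_x^{-1}(g)=\tilde{g}\cdot(\ker p)_x$ for any lift $\tilde{g}$ of $g$ (right translation, using $\bs(\tilde g)=x$) exhibits $p_x$ directly as a principal $(\ker p)_x$-bundle with the \emph{fixed} structure group $(\ker p)_x$, so no appeal to local triviality of $\ker p\to M$ or to homotopy equivalence of varying fibers is needed (and note that fibers being homotopy equivalent does not by itself yield a Serre fibration, whereas the principal bundle structure does).
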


\begin{remark}
{It is a classical fact about covering spaces that $\pi_1((\ker p)_{x}^{0})$ is isomorphic to 
the kernel of the universal covering map (and group homomorphism) $\RR^{\mathrm{dim}(\h)} \to (\ker p)_{x}^{0}$.}
\end{remark}

\begin{proof}
{Fix $x\in M$. The $\bs$-fibers at $x$ of the short exact sequence \eqref{eq:sesgroid} constitute the fibration $1 \to (\ker p)_x \to \Sigma_x \stackrel{p}{\to} H(\cF)_x \to 1$. Apply  the long exact sequence of homotopy groups to this fibration, and use that $\pi_2((\ker p)_x)=\{0\}$ (since $\pi_2$ of any Lie group is trivial) and that $\pi_1(\Sigma_x)=\{0\}$. This delivers the short exact sequence of groups \eqref{eq:sesgr}.}
\end{proof}


\begin{thebibliography}{10}

\bibitem{AndrSk}
I.~Androulidakis and G.~Skandalis.
\newblock The holonomy groupoid of a singular foliation.
\newblock {\em J. Reine Angew. Math.}, 626:1--37, 2009.

\bibitem{IakAnal}
I.~Androulidakis and G.~Skandalis.
\newblock The analytic index of elliptic pseudodifferential operators on a
  singular foliation.
\newblock {\em J. K-Theory}, 8(3):363--385, 2011.

\bibitem{AZ4}
I.~Androulidakis and M.~Zambon.
\newblock {Integration of Singular Subalgebroids}.
\newblock {In preparation}.

\bibitem{AZ1}
I.~{Androulidakis} and M.~{Zambon}.
\newblock {Smoothness of holonomy covers for singular foliations and essential
  isotropy.}
\newblock {\em {Math. Z.}}, 275(3-4):921--951, 2013.

\bibitem{Brahic}
O.~Brahic.
\newblock Extensions of {L}ie brackets.
\newblock {\em J. Geom. Phys.}, 60(2):352--374, 2010.

\bibitem{ExamGil}
G.~R. Cavalcanti.
\newblock Examples and counter-examples of log-symplectic manifolds.
\newblock 03 2013, 1303.6420.

\bibitem{ConnSmooth}
J.~F. Conn.
\newblock Normal forms for smooth {P}oisson structures.
\newblock {\em Ann. of Math. (2)}, 121(3):565--593, 1985.

\bibitem{VanEst}
M.~Crainic.
\newblock Differentiable and algebroid cohomology, van {E}st isomorphisms, and
  characteristic classes.
\newblock {\em Comment. Math. Helv.}, 78(4):681--721, 2003.

\bibitem{CrFeLie}
M.~Crainic and R.~L. Fernandes.
\newblock Integrability of {L}ie brackets.
\newblock {\em Ann. of Math. (2)}, 157(2):575--620, 2003.

\bibitem{CrFePois}
M.~Crainic and R.~L. Fernandes.
\newblock Integrability of {P}oisson brackets.
\newblock {\em J. Differential Geom.}, 66(1):71--137, 2004.

\bibitem{LecturesIntegrabilty}
M.~Crainic and R.~L. Fernandes.
\newblock Lectures on integrability of {L}ie brackets.
\newblock In {\em Lectures on {P}oisson geometry}, volume~17 of {\em Geom.
  Topol. Monogr.}, pages 1--107. Geom. Topol. Publ., Coventry, 2011.

\bibitem{DebordJDG}
C.~Debord.
\newblock Holonomy groupoids of singular foliations.
\newblock {\em J. Diff. Geom.}, 58(3):467--500, 2001.

\bibitem{Debord2013}
C.~Debord.
\newblock Longitudinal smoothness of the holonomy groupoid.
\newblock {\em C. R. Math. Acad. Sci. Paris}, 351(15-16):613--616, 2013.

\bibitem{SymplTopb}
P.~Frejlich, D.~M. Torres, and E.~Miranda.
\newblock Symplectic topology of b-symplectic manifolds.
\newblock 12 2013, 1312.7329.

\bibitem{GuLi}
M.~Gualtieri and S.~Li.
\newblock Symplectic groupoids of log symplectic manifolds.
\newblock {\em Int. Math. Res. Not. IMRN}, (11):3022--3074, 2014.

\bibitem{OriginalBman}
V.~Guillemin, E.~Miranda, and A.~R. Pires.
\newblock Symplectic and {P}oisson geometry on {$b$}-manifolds.
\newblock {\em Adv. Math.}, 264:864--896, 2014.

\bibitem{HawkinsPlanck}
E.~Hawkins.
\newblock Quantization of {P}lanck's {C}onstant.
\newblock 09 2013, to appear in {\it J. Sympl. Geometry}.

\bibitem{LaniusScattering}
M.~Lanius.
\newblock {Symplectic, Poisson, and contact geometry on scattering manifolds},
  03 2016, 1603.02994.

\bibitem{LLG}
C.~Laurent-Gengoux, S.~Lavau, and T.~Strobl.
\newblock {$L_{\infty}$-algebroids and Singular Foliations}.
\newblock {In preparation}.

\bibitem{Ping}
C.~Laurent-Gengoux, M.~Sti{{\'e}}non, and P.~Xu.
\newblock Lectures on {P}oisson groupoids.
\newblock {\em Geometry and Topology Monographs}, 17:473--502, 2011.

\bibitem{LeeIntroSmooth}
J.~M. Lee.
\newblock {\em Introduction to smooth manifolds}, volume 218 of {\em Graduate
  Texts in Mathematics}.
\newblock Springer, New York, second edition, 2013.

\bibitem{LiuXuExactPoisGr}
Z.-J. Liu and P.~Xu.
\newblock Exact {L}ie bialgebroids and {P}oisson groupoids.
\newblock {\em Geom. Funct. Anal.}, 6(1):138--145, 1996.

\bibitem{MacXuBiPois}
K.~C.~H. Mackenzie and P.~Xu.
\newblock Lie bialgebroids and {P}oisson groupoids.
\newblock {\em Duke Math. J.}, 73(2):415--452, 1994.

\bibitem{maxu}
K.~C.~H. Mackenzie and P.~Xu.
\newblock Integration of {L}ie bialgebroids.
\newblock {\em Topology}, 39(3):445--467, 2000.

\bibitem{Stefan}
P.~Stefan.
\newblock Accessible sets, orbits, and foliations with singularities.
\newblock {\em Proc. London Math. Soc. (3)}, 29:699--713, 1974.

\bibitem{Sussmann}
H.~J. Sussmann.
\newblock Orbits of families of vector fields and integrability of
  distributions.
\newblock {\em Trans. Amer. Math. Soc.}, 180:171--188, 1973.

\bibitem{Swan}
R.~G. Swan.
\newblock Vector bundles and projective modules.
\newblock {\em Trans. Amer. Math. Soc.}, 105:264--277, 1962.

\bibitem{alancoiso}
A.~Weinstein.
\newblock Coisotropic calculus and {P}oisson groupoids.
\newblock {\em J. Math. Soc. Japan}, 40(4):705--727, 1988.

\bibitem{w-heis}
A.~Weinstein.
\newblock Blowing up realizations of {H}eisenberg-{P}oisson manifolds.
\newblock {\em Bull. Sci. Math.}, 113(4):381--406, 1989.

\end{thebibliography}

\end{document}